\newtheorem{theorem}{Theorem}[section]
\newtheorem{lemma}[theorem]{Lemma}
\newtheorem{proposition}[theorem]{Proposition}
\theoremstyle{definition}
\newtheorem{definition}[theorem]{Definition}
\theoremstyle{remark}
\newtheorem{remark}[theorem]{Remark}
\numberwithin{equation}{section}
\begin{document}

\title[Logarithmic Klein-Gordon Equation] {Orbital Stability of Periodic Standing Waves for the Logarithmic Klein-Gordon Equation}

%----------Author 1
\author[F. Natali]{F\'abio  Natali}

\address{%
Departamento de Matem\'atica - Universidade Estadual de Maring\'a\\
Avenida Colombo, 5790, CEP 87020-900, Maring\'a, PR, Brazil.}

\email{fmanatali@uem.br}

%----------Author 2
\author[E. Cardoso Jr.]{Eleomar Cardoso Jr.}

\address{%
Departamento de Matem\'atica - Universidade Federal de Santa Catarina\\
Rua Jo\~ao Pessoa, 2750, Bairro da Velha, CEP 89036-002, Blumenau, SC,
Brazil.}

\email{eleomar.junior@ufsc.br}

%----------classification, keywords, date
\subjclass[2010]{Primary 35A01, 34C25, 37K35}

\keywords{Logarithmic Klein-Gordon, compactness method, periodic
waves, orbital stability}

\begin{abstract}

The main goal of this paper is to present
orbital stability results of periodic standing waves for the one-dimensional
logarithmic Klein-Gordon equation. To do so, we first use compactness
arguments and a non-standard analysis to obtain the existence and
uniqueness of weak solutions for the associated Cauchy problem in the energy space. Second, we prove
the orbital stability of standing waves using a stablity analysis of
conservative systems.
\end{abstract} \maketitle

\section{Introduction}\
 Consider the Klein-Gordon equation with
 $p-$power nonlinearity,
\begin{equation}\label{KG1}
u_{tt}-u_{xx}+u-\mu\log(|u|^p)u=0.
\end{equation}
Here, $u:\mathbb{R}\times
\mathbb{R}\rightarrow \mathbb{C}$ is a complex valued function,
$\mu>0$ and $p$ is a positive integer.\\
\indent When $p=2$, the problem $(\ref{KG1})$ models a
relativistic version of logarithmic quantum mechanics introduced in
\cite{birula2} and \cite{birula1}. The parameter $\mu$ measures the
force of the nonlinear interactions. It has been shown
experimentally (see \cite{gahler}, \cite{shimony} and \cite{shull})
that the nonlinear effects in quantum mechanics are very small,
namely for $0<\mu< 3.3\times 10^{-15}$. Still, the model can be
found in many branches of physics, e.g. nuclear physics, optics,
geophysics (see \cite{gorka2}). In addition, Klein-Gordon equation
with logarithmic potential has been also introduced in the quantum
field theory as in \cite{rosen}. This kind of nonlinearity appears
naturally in inflation cosmology and in super-symmetric field
theories (see \cite{gorka2}).\\
\indent Along the last thirty years, the theory of stability of
traveling/standing wave solutions for nonlinear evolution equation
has increased into a large field that attracts the attention of both
mathematicians and physicists. By considering $\mu=1$
in equation $(\ref{KG1})$, our purpose is to give a contribution in
the stability theory by proving the first result of orbital
stability of periodic waves of the form $u(x,t)=e^{ict}\varphi(x)$,
$t>0$, where $c$ is called frequency and $\varphi$ is a real, even
and periodic function. So, if we substitute this kind of solution in
equation $(\ref{KG1})$, one has the following nonlinear ordinary
differential equation
\begin{equation}\label{travKG}
-\varphi_c''+(1-c^2)\varphi_c-\log(|\varphi_c|^p)\varphi_c=0,
\end{equation}
where $\varphi_c$ indicates the dependence of the function $\varphi$
with respect to the parameter $c$. In a general setting, we can use
a qualitative analysis of planar waves to determine the existence of special
solutions related to the equation, namely: \textit{solitary
waves} and \textit{periodic waves}. To do so, we see that equation
$(\ref{travKG})$ has three equilibrium points, a saddle point and
two center points (around the center points we have strictly
negative and positive periodic orbits). The saddle point is the origin and one sees an explicit
solitary wave (which is unique up to translation) given by
\begin{equation}\label{solitwave}
\varphi_c(x)=e^{\frac{1}{2}+\frac{1-c^2}{p}}e^{-\frac{px^2}{4}}.
\end{equation}
\indent Solution in $(\ref{solitwave})$ is very similar to the solitary wave
concerning the one dimensional version of the logarithmic nonlinear Schr\"odinger equation given by
\begin{equation}\label{logNLS}
iu_t+ \Delta u+\log(|u|^2)u=0,
\end{equation}
where $u:\mathbb{R}^n\times\mathbb{R}\rightarrow\mathbb{C}$ is a complex-valued function and $n\geq1$.\\
\indent Concerning the orbital stability of waves for the
equation $(\ref{logNLS})$, we have interesting results. In fact,
in \cite{Cazenave3} and \cite{cazenave2} the authors
have used variational techniques to get the orbital stability for
solitary waves for the model
$(\ref{logNLS})$ (considering equation $(\ref{KG1})$ in higher dimensions, similar results have been determined by the same authors). Furthermore, the stability of
solitary waves has also been treated in \cite{blanchard}, where the
authors used the general theory in \cite{grillakis1} in the space of
radial functions. In periodic context and $n=1$, we have the work in
\cite{natali1}, where the authors
approached the abstract theory in \cite{grillakis1} to deduce the orbital stability for the equation $(\ref{logNLS})$ in the even periodic Sobolev space $H_{per,e}^1([0,L])$ .\\
\indent In \cite{carles}, the authors have considered the Korteweg-de Vries equation with
logarithmic nonlinearity, namely,
\begin{equation}\label{log-KdV1}
u_t+u_{xxx}+(u\log(|u|))_x=0,
\end{equation}
where $u=u(x,t)$
is a real-valued function with $(x,t)\in\mathbb{R}\times\mathbb{R}$. The authors established results of linear stability
by using numerical approximations and they showed that the Gaussian initial data do not spread out and preserve their spatial Gaussian decay in the time evolution of the linearized logarithmic KdV equation. Concerning the stability of periodic traveling waves for the equation $(\ref{log-KdV1})$, we can cite \cite{CNP}. In both cases, the authors have determined their results of stability in a conditional sense since the uniqueness of solutions for the associated Cauchy problem is obtained by supposing that the evolution $u$ satisfies
$\partial_x(\log(|u|))\in L^{\infty}(0,T;L^{\infty})$.\\
\indent Next, we shall give an outline of our work. The logarithmic nonlinearity
in equation $(\ref{KG1})$ brings a rich set of difficulties
since function $x\in\mathbb{R}\mapsto x\log(|x|)$ is not
differentiable at the origin. The lack of smoothness of the
nonlinearity interferes in questions concerning the local
solvability
since it is not possible to apply a contraction
argument to deduce the existence, uniqueness and continuous dependence
with respect to the initial data. In all these works \cite{carles},
\cite{Cazenave3}, \cite{cazenave2} and \cite{cazenave}, the authors have determined a modified
problem related to the model to overcome the absence of
regularity. The modified solution (or approximate solution)
converges in some sense to the solution of the original problem
provided that convenient uniform estimates
for the approximate solution are established.
The construction of the
approximate solution, which converges in the weak sense
to the solution of the original problem, gives us the existence of weak solutions in a convenient Banach space. \\
\indent In \cite{gorka2} the author have used the Galerkin approximation to deduce existence of global weak solutions related to the problem in $(\ref{KG1})$ in bounded domains by assuming Dirichlet boundary conditions. In our work, the existence of periodic weak solutions follows the same spirit of \cite{gorka2} and we present new results concerning the uniqueness of weak periodic solutions without restrictions on the initial data. The authors in \cite{gorka1} have determined the uniqueness of smooth solutions for the one-dimensional version of $(\ref{KG1})$ posed on the real line by assuming that the initial data is bounded away from zero. Concerning the equation $(\ref{logNLS})$,
a uniqueness result has been treated in \cite{cazenave} by combining energy estimates
with a convenient Gronwall-type inequality and fact that the $L^2-$norm is a conserved quantity.
The $L^2-$norm is not a conserved quantity when equation $(\ref{KG1})$ is considered and it seems a hard task
some kind of adaptation of the arguments contained in \cite{cazenave}.\\
\indent We prove the existence of global weak solutions in time, uniqueness and existence of conserved quantities in the following theorem:
\begin{theorem}\label{t.bc.gl.1.1} Let $p$ be a positive integer and consider $L>0$. There exists a unique global (weak) solution
to the problem (\ref{gl.1}) in the sense that
$$u\in L^{\infty}(0,T;H^1_{per}([0,L])),\
\ u_t\in L^{\infty}(0,T;L^2_{per}([0,L])),\ u_{tt}\in
L^{\infty}(0,T;H^{-1}_{per}([0,L])),$$  and $u$ satisfies,
\begin{eqnarray*}&&
\langle
u_{tt}(\cdot,t),\zeta\rangle_{H^{-1}_{per},H^1_{per}}+\displaystyle\int_0^L\nabla
u(\cdot,t)\cdot\overline{\nabla\zeta}\
dx\nonumber\\
&&+\displaystyle\int_0^Lu(\cdot,t)\overline{\zeta}\
dx=\displaystyle\int_0^Lu(\cdot,t)\log(|u(\cdot,t)|^p)\overline{\zeta}\
dx\ \ \ \ \mbox{a.e.}\ t\in [0,T],\end{eqnarray*} for all $\zeta\in
H^1_{per}([0,L])$. Furthermore, $u$ must satisfy
$$u(\cdot,0)=u_0\ \ \textrm{and}\ \ u_t(\cdot,0)=u_1.$$
\indent In addition, the weak solution satisfies the following conserved quantities:
\begin{equation}\label{cons-quant-1}\mathcal{E}(u(\cdot,t),u'(\cdot,t))=\mathcal{E}(u_{0},u_{1})\
\ \mbox{and}\ \
\mathcal{F}(u(\cdot,t),u'(\cdot,t))=\mathcal{F}(u_{0},u_{1}),
\end{equation}
a.e. $t\in [0,T]$. Here, $\mathcal{E}$ and $\mathcal{F}$ are defined by
\begin{eqnarray}\label{gl.108.1}\ \ \ \ \mathcal{E}(u(\cdot,t),u_t(\cdot,t)):=\displaystyle\frac{1}{2}\displaystyle\left[\displaystyle\int_0^L\
|u_x(\cdot,t)|^2+|u_t(\cdot,t)|^2+\displaystyle\left(1+\displaystyle\frac{p}{2}-\log(|u(\cdot,t)|^p)\right)|u(\cdot,t)|^2
dx\right]\end{eqnarray} and
\begin{eqnarray}\label{gl.109.1}\mathcal{F}(u(\cdot,t),u_t(\cdot,t))&:=&\textrm{Im}\displaystyle\int_0^L\overline{u(\cdot,t)}\ u_t(\cdot,t)\
dx\nonumber\\
\\
&=&\displaystyle\int_0^L[\textrm{Re}\ u(\cdot,t)\ \textrm{Im}\
u_t(\cdot,t)-\textrm{Im}\ u(\cdot,t)\ \textrm{Re}\ u_t(\cdot,t)]\
dx.\nonumber
\end{eqnarray}

\end{theorem}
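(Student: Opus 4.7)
The plan is to combine a Galerkin scheme with a regularization of the logarithmic nonlinearity to produce a global weak solution together with its conservation laws, and then to handle uniqueness via an Osgood-type differential inequality exploiting a precise modulus of continuity for $z\mapsto z\log(|z|^p)$ near the origin.

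For existence, I would first replace the singular nonlinearity by a smooth regularization $f_\varepsilon(z):=z\log(|z|^p+\varepsilon)$, which is $C^\infty$ on $\mathbb{C}$, and then project the associated Cauchy problem onto the finite-dimensional trigonometric subspaces $S_N=\mathrm{span}\{e^{2\pi ikx/L}:|k|\le N\}$. Standard ODE theory produces Galerkin solutions $u^{\varepsilon,N}$, and the energy identity of the regularized problem---combined with a control of the form $\int_0^L|u|^2|\log(|u|^p)|\,dx\le C(1+\|u\|_{H^1}^2)$ handling the logarithmic part of the potential---prevents finite-time blow-up and yields uniform-in-$(\varepsilon,N)$ bounds of $u^{\varepsilon,N}$ in $L^\infty(0,T;H^1_{per})$, of $u_t^{\varepsilon,N}$ in $L^\infty(0,T;L^2_{per})$, and (by reading $u_{tt}^{\varepsilon,N}$ off from the equation) in $L^\infty(0,T;H^{-1}_{per})$. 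Aubin--Lions compactness then extracts a subsequence converging strongly in $C([0,T];L^2_{per})$, and dominated convergence allows passage to the limit in $f_\varepsilon(u^{\varepsilon,N})\to u\log(|u|^p)$, giving the weak formulation.

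The conservation laws pass to the limit as follows. The Galerkin iterates satisfy exact regularized versions of \eqref{cons-quant-1}; weak lower semicontinuity of the $H^1$- and $L^2$-norms, combined with strong $L^2$ convergence, yields the inequality $\mathcal{E}(u(t),u_t(t))\le\mathcal{E}(u_0,u_1)$, and applying the same inequality backward in time (time reversibility of \eqref{KG1}) upgrades this to the stated equality. The identity for $\mathcal{F}$ is obtained analogously via the strong $L^2$ convergence of $u^{\varepsilon,N}$ and weak convergence of $u_t^{\varepsilon,N}$.

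The main obstacle is uniqueness, because the nonlinearity is not locally Lipschitz at $0$ and no lower bound on $|u|,|v|$ is available. Given two weak solutions $u,v$ with the same initial data, I would set $w:=u-v$, test the equation for $w$ against $\bar w_t$, and combine the elementary estimate
\[
\bigl|z\log(|z|^p)-w\log(|w|^p)\bigr|\le C_M\,|z-w|\bigl(1+\bigl|\log|z-w|\bigr|\bigr),\qquad |z|,|w|\le M,
\]
with the one-dimensional embedding $H^1_{per}\hookrightarrow L^\infty_{per}$, which supplies a uniform $L^\infty$ bound $M$ on $u$ and $v$. Splitting the resulting integral over $\{|w|\le 1\}$ and $\{|w|>1\}$ and absorbing $|w|^2|\log|w||$ using $|w|^2|\log|w||\le |w|^2+|w|^\alpha$ with some $\alpha\in(0,2)$ on the first region produces a differential inequality
\[
\Phi'(t)\le C\,\Phi(t)\bigl(1+|\log\Phi(t)|\bigr),\qquad \Phi(t):=\|w_t(t)\|_{L^2}^2+\|w_x(t)\|_{L^2}^2+\|w(t)\|_{L^2}^2,
\]
with $\Phi(0)=0$. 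Since the modulus $s\mapsto s(1+|\log s|)$ satisfies Osgood's condition, $\Phi\equiv 0$ on $[0,T]$, which yields the claimed uniqueness.
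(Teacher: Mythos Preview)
Your existence argument (regularized nonlinearity plus Galerkin) and your time-reversal trick for upgrading the energy inequality to an equality are both legitimate alternatives to what the paper does; the paper runs Galerkin directly on the non-smooth nonlinearity via Carath\'eodory, and recovers the energy identity by a Lions-type time-mollification argument rather than by reversing time. Either route is fine, provided uniqueness is available.

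The genuine gap is in your uniqueness argument, at two places.

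\textbf{(a) Admissibility of the test function.} Pairing the equation for $w=u-v$ with $\bar w_t$ is not a priori justified: the weak formulation demands test functions in $H^1_{per}$, whereas $w_t$ lies only in $L^\infty_tL^2_x$. This is repairable by time regularization, but you do not mention it, and it is exactly the obstruction that leads the paper to abandon the energy method here.

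\textbf{(b) The Osgood step is wrong as written.} After Cauchy--Schwarz you must control $\int_0^L(1+|\log|w||)^2|w|^2\,dx$. Your proposed bound $|w|^2|\log|w||\le|w|^2+|w|^\alpha$ on $\{|w|\le1\}$ with $\alpha\in(0,2)$ gives, via H\"older, only $\int|w|^\alpha\le C\Phi^{\alpha/2}$, and feeding this back yields at best $\Phi'\le C(\Phi+\Phi^{\beta})$ with some $\beta<1$. The modulus $s\mapsto s^\beta$ \emph{fails} Osgood's condition ($\int_0^1 s^{-\beta}\,ds<\infty$), so no uniqueness follows from it. To actually reach $\Phi'\le C\Phi(1+|\log\Phi|)$ you must instead use the one-dimensional embedding $\|w\|_{L^\infty}\le C\Phi^{1/2}$ together with the monotonicity of $s\mapsto s^2(\log s)^2$ on $(0,e^{-1}]$, which gives
\[
\int_0^L |w|^2(\log|w|)^2\,dx \;\le\; L\,\|w\|_{L^\infty}^2\bigl(\log\|w\|_{L^\infty}\bigr)^2 \;\le\; C\,\Phi\,(1+|\log\Phi|)^2,
\]
and only then does the claimed differential inequality hold.

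For comparison, the paper avoids both difficulties entirely: it proves a d'Alembert representation
\[
w(x,t)=\tfrac12\int_0^t\!\!\int_{x-t+\tau}^{x+t-\tau} f(y,\tau)\,dy\,d\tau
\]
for the difference on a short interval $[0,T_0]$ with $T_0<L/4$ (Lemma~\ref{p.bc.gl.01}), obtains an integral inequality for $t\mapsto\|w(\cdot,t)\|_{L^\infty_{per}}$ directly from it, and closes with the logarithmic Gronwall inequality of Proposition~\ref{t.bc.gl.5}. No pairing with $w_t$ is ever needed.
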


We now present the basic ideas to prove Theorem $\ref{t.bc.gl.1.1}$. In fact, we first present an approximate problem
for the equation $(\ref{KG1})$ defined on the subspace
$$V_m=[\omega_1,...,\omega_m],$$ where
$\{\omega_{\nu}\}_{\nu\in\mathbb{N}}$ is a complete orthonormal set
in $L^2_{per}([0,L])$ which is orthogonal in $H_{per}^1([0,L])$.
After that, we  use Caratheodory's Theorem to deduce the
existence of an approximate solution $u_m$
for the approximate problem (see $(\ref{gl.3})$) for all $m\in\mathbb{N}$.
Thus, uniform bounds are required in order to get the existence of global
weak solutions after passage to the limit. In addition, the approximate solution satisfies
$$\mathcal{E}(u_m(\cdot,t),u_m'(\cdot,t))=\mathcal{E}(u_{0,m},u_{1,m})\
\ \mbox{and}\ \
\mathcal{F}(u_m(\cdot,t),u_m'(\cdot,t))=\mathcal{F}(u_{0,m},u_{1,m}).$$
However, since $u$ and $u_t$ are obtained as weak limits of the
approximate solution $u_m$ and $u_m'$, respectively, we only
guarantee the ``conserved'' inequalities
\begin{equation}\label{eqcons}
\mathcal{E}(u(\cdot,t),u_t(\cdot,t))\leq\mathcal{E}(u_0,u_{1})\ \
\mbox{and}\ \
\mathcal{F}(u(\cdot,t),u_t(\cdot,t))\leq\mathcal{F}(u_{0},u_{1})\ \
\ \mbox{a.e.}\ t\in[0,T].\end{equation} In order to obtain that
equalities in $(\ref{eqcons})$ occur, we employ the arguments in \cite{lions} (see Chapter 1, page 22) provided that a
result of uniqueness for weak solutions can be established. The uniqueness of solutions for the model $(\ref{KG1})$ is one of the cornerstones of our paper. In fact, as we have already mentioned above, the nonlinearity $x\in\mathbb{R}\mapsto x\log(|x|)$ is not locally Lipschitz in
convenient Lebesgue measurable spaces. Thus, uniqueness of solutions can not be determined using a difference of weak solutions with zero initial data combined with contraction arguments as is usual in evolution problems. To give a positive answer for the uniqueness of solutions, let $L>0$ be fixed. We first establish the existence of $T_0\in
(0,\frac{L}{4})$ such that the weak solution $w$ for the
inhomogeneous Cauchy problem
\begin{eqnarray}\label{NCP}\displaystyle\left\{\begin{array}{l}
                                       w_{tt}-w_{xx}=f(x,t),\ \ (x,t)\in\mathbb{R}\times [0,T_0].\\
                                       w(x,0)=0,\ \ w_{t}(x,0)=0,\ \
                                       x\in\mathbb{R}.\\
                                       w(x+L,t)=w(x,t) \ \ \textrm{for\ all}\
                                       t\in [0,T_0],\ \ x\in
                                       \mathbb{R},
                                     \end{array}\right.\end{eqnarray}
satisfies
\begin{equation}\label{eqint}w(x,t)=\displaystyle\frac{1}{2}\displaystyle\int_0^{t}\int_{x-t+\tau}^{x+t-\tau}f(y,\tau)\ dy\
d\tau, \ \
(x,t)\in\mathbb{R}\times [0,T_0]. \end{equation}
We use the characterization in $(\ref{eqint})$ to determine our uniqueness result provided that $t\in[0,T_0]$. Moreover, since solution $w$ is global in time, we employ an interaction argument to get the uniqueness over $\mathbb{R}\times [0,T]$, $T>0$. In \cite{cazenave}, the same formula $(\ref{eqint})$ has been used to obtain uniqueness of weak solutions for a similar problem as in $(\ref{NCP})$ posed on $\mathbb{R}^3\times [0,T]$.\\

 \indent With the results determined by Theorem $\ref{t.bc.gl.1.1}$ in hands, we are enable to establish our orbital stability result given by:

\begin{theorem}\label{t.lkg.2.1.1} Consider $p=1,2,3$ and $c\in I$ satisfying $\displaystyle\frac{\sqrt{p}}{2}<|c|<1$. Let $\varphi_c$ be a periodic
solution for the equation $(\ref{travKG})$. The periodic
wave $\widetilde{u}(x,t)=e^{ict}\varphi_c(x)$ is orbitally stable by
the periodic flow of the equation (\ref{KG1}).
\end{theorem}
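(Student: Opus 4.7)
The plan is to apply a Grillakis-Shatah-Strauss style spectral framework, using the two conserved quantities $\mathcal{E}$ and $\mathcal{F}$ furnished by Theorem~\ref{t.bc.gl.1.1}. At $t=0$ the standing wave is given by the pair $(\varphi_c,ic\varphi_c)$, and this pair is a critical point of the augmented action
$$G_c(u,v) := \mathcal{E}(u,v) - c\,\mathcal{F}(u,v),$$
whose Euler-Lagrange equation is precisely (\ref{travKG}). The symmetries of (\ref{KG1}) act by phase rotation $u\mapsto e^{i\theta}u$ and spatial translation $u\mapsto u(\cdot+y)$, generating a two-dimensional orbit $\mathcal{O}_{\varphi_c}\subset H^1_{per}([0,L])\times L^2_{per}([0,L])$. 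Orbital stability reduces to a coercivity estimate for the quadratic part of $G_c-G_c(\varphi_c,ic\varphi_c)$ transverse to $\mathcal{O}_{\varphi_c}$, together with conservation of $\mathcal{E}$ and $\mathcal{F}$ along the flow (Theorem~\ref{t.bc.gl.1.1}), run through the standard continuity-in-time argument.

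Writing a perturbation in the rotating frame as $u = e^{ict}(\varphi_c + v + iw)$ with $v,w$ real, the second variation decouples spatially into two Hill-type operators
$$\mathcal{L}_1 = -\partial_x^2 + (1-c^2) - p - \log|\varphi_c|^p, \qquad \mathcal{L}_2 = -\partial_x^2 + (1-c^2) - \log|\varphi_c|^p,$$
on $L^2_{per}([0,L])$. Equation (\ref{travKG}) gives $\mathcal{L}_2\varphi_c = 0$, and differentiating it in $x$ gives $\mathcal{L}_1\varphi_c' = 0$; these are the kernel directions produced by the two symmetries. The abstract framework then requires three ingredients: \emph{(i)} $\mathcal{L}_1$ has exactly one simple negative eigenvalue and $\ker \mathcal{L}_1 = [\varphi_c']$, so that on the even subspace in which $\varphi_c$ lives the kernel is trivial; \emph{(ii)} $\mathcal{L}_2 \geq 0$ with $\ker\mathcal{L}_2 = [\varphi_c]$, which is where the lower bound $|c|>\sqrt{p}/2$ is expected to enter, since it keeps the potential inside the non-negative part of the periodic Hill spectrum; \emph{(iii)} the slope condition $d''(c) > 0$, where $d(c) := G_c(\varphi_c,ic\varphi_c)$, which by implicit differentiation of (\ref{travKG}) reduces to a sign condition for $\frac{d}{dc}\int_0^L|\varphi_c|^2\,dx$.

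The main obstacle is the spectral count in \emph{(i)}, because unlike the Schr\"odinger case treated in \cite{natali1} the potential $-\log|\varphi_c|^p$ is not of Lam\'e or Jacobi elliptic-function type, so the classical tables of Hill eigenvalues do not apply directly. The plan to circumvent this is to exploit the qualitative phase-portrait information recalled after (\ref{travKG}): for $c$ in the stated range the orbit $\varphi_c$ is sign-definite with exactly one maximum and one minimum per period, so $\varphi_c'$ has exactly two zeros on $[0,L]$; by Sturm oscillation theory for periodic Sturm-Liouville operators this forces $\varphi_c'$ to be the second eigenfunction of $\mathcal{L}_1$, yielding \emph{(i)}. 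The restriction $p \in \{1,2,3\}$ and the upper bound $|c|<1$ should then enter when verifying the non-negativity \emph{(ii)} and the convexity \emph{(iii)}, both of which amount to quadrature estimates on the explicit first integral of (\ref{travKG}). Once \emph{(i)}--\emph{(iii)} are established, coercivity of $G_c - G_c(\varphi_c,ic\varphi_c)$ modulo $\mathcal{O}_{\varphi_c}$ together with the conservation laws from Theorem~\ref{t.bc.gl.1.1} yield orbital stability in the usual way.
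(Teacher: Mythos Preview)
Your overall architecture matches the paper's, but the roles of the parameter constraints are misplaced and ingredient \emph{(i)} has a genuine gap.

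For \emph{(i)}, periodic Sturm--Liouville oscillation theory only tells you that an eigenfunction with exactly two zeros on $[0,L)$ belongs to eigenvalue $\gamma_1$ \emph{or} $\gamma_2$ in the ordered spectrum $\gamma_0<\gamma_1\le\gamma_2<\cdots$; it does \emph{not} by itself force $\varphi_c'$ to sit at $\gamma_1$. The three cases $\gamma_1=\gamma_2=0$, $\gamma_1=0<\gamma_2$, and $\gamma_1<0=\gamma_2$ all remain open, corresponding to inertial index $(1,2)$, $(1,1)$, or $(2,1)$, and only the middle one gives what you need. The paper closes this gap not by oscillation but by the Floquet criterion of \cite{neves}: one computes the constant $\theta$ in $\bar y(x+L)=\bar y(x)+\theta\varphi_c'(x)$ for the second fundamental solution $\bar y$ of $\mathcal{L}_1 y=0$, and the sign of $\theta$ decides between $\gamma_1=0$ and $\gamma_2=0$. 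This is done numerically at a representative $c_0$ for each $p$, then propagated to all $c$ via the isoinertial property of \cite{natali1}.

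For \emph{(ii)} no parameter restriction is needed: $\mathcal{L}_2\varphi_c=0$ with $\varphi_c>0$ already makes zero the \emph{ground-state} eigenvalue of $\mathcal{L}_2$, so $\mathcal{L}_2\ge0$ for every $c$. The lower bound $|c|>\sqrt{p}/2$ enters only through \emph{(iii)}: differentiating (\ref{travKG}) in $c$ and pairing with $\varphi_c$ yields the exact formula
\[
d''(c)=\Bigl(\tfrac{4c^2}{p}-1\Bigr)\|\varphi_c\|_{L^2_{per}}^2,
\]
so $d''(c)>0\iff |c|>\sqrt{p}/2$. Finally, the upper bound $|c|<1$ and the restriction $p\in\{1,2,3\}$ come from neither \emph{(ii)} nor \emph{(iii)}; they appear in the Lyapunov step, where the paper measures the distance to the orbit via $\|u_x\|^2+(1-c^2)\|u\|^2+\|v\|^2$, which is an equivalent norm on $H^1_{per}\times L^2_{per}$ only when $1-c^2>0$. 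Combined with $c^2>p/4$ this forces $p<4$.
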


 We present some important facts concerning Theorem $\ref{t.lkg.2.1.1}$. First, it is important to mention that we are interested in obtaining smooth periodic waves. Thus, the presence of a logarithmic type nonlinearity in the ODE  $(\ref{travKG})$  forces us to consider \textit{positive and periodic waves}. A planar analysis concerning the equilibrium points of $(\ref{travKG})$ gives us that the period of our periodic solutions must satisfy $L>\frac{2\pi}{\sqrt{p}}$. Additionally, it is important to mention that it is possible deduce (at least formally) periodic waves having small periods (see \cite{gorka1}). However, we are not capable to decide about the orbital stability in this case because the energy $\mathcal{E}$ in $(\ref{gl.108.1})$ is not a smooth functional at these waves (in fact, the waves obtained with this property have two zeros over the interval $[0,L]$) and our stability analysis consists in proving that the periodic waves are minimizers of the energy $\mathcal{E}$ with constraint $\mathcal{F}$.\\
 \indent In a general setting, let us consider the Hill operator
\begin{equation}\label{hill11}
\mathcal{L}_Q=\displaystyle-\frac{d^2}{dx^2}+Q(x),
\end{equation}
where $Q$ is a smooth $L-$periodic function. In \cite{natali1}, the authors have presented a tool based on the classical Floquet theorem  to establish a
characterization of the first eigenvalues of $\mathcal{L}_Q$
by knowing one of its eigenfunctions. The key point of the work is
that it is not necessary to know an explicit smooth solution
$\varphi=\varphi(x)$ which solves the general nonlinear differential
equation
\begin{equation}\label{geneq}
-\varphi''+h(c,\varphi)=0,\end{equation} where $h$ is smooth in
a open subset contained in $\mathbb{R}^2$. Moreover, in
\cite{natali1} it is possible to decide that the eigenvalue
\textit{zero} is simple without knowing an explicit periodic
solution which solves equation $(\ref{geneq})$. In \cite{AN1},
\cite{AN2}, \cite{AN}, and references
therein, the authors have determined explicit solutions to obtain
the behavior of the non-positive spectrum for the
 Hill operator $(\ref{hill11})$ and the orbital stability of periodic waves. \\
\indent Following arguments in \cite{bona}, \cite{grillakis1} and
\cite{weinstein1}, the first requirement for the stability of
periodic waves related to the equation $(\ref{KG1})$ concerns in
proving the existence of an open interval $I\subset\mathbb{R}$ and a
smooth branch $c\in I\mapsto \varphi_{c}$ which solves
$(\ref{travKG})$, all of them with the same period $L>0$. In our
study, if $L\in\left(\frac{2\pi}{\sqrt{p}},+\infty\right)$ we are
capable to combine the approach in \cite{natali1} and the Implicit
Function Theorem to deduce a smooth curve of even periodic solutions
defined over $I\subset \mathbb{R}$. The second step to obtain the
stability of periodic waves is to analyze the behavior of the
non-positive spectrum of the linearized operator
\begin{equation*}\label{opera1}
\mathcal{L}_{\varphi_c}=\displaystyle\left(
                  \begin{array}{cccc}
                    \mathcal{L}_{Re,\varphi_c} & 0  \\
                    0 & \mathcal{L}_{Im,\varphi_c} \\
                  \end{array}
                \right),
\end{equation*}
where $\mathcal{L}_{Re,\varphi_c}$ and $\mathcal{L}_{Im,\varphi_c}$
are defined, respectively, as
\begin{eqnarray}\label{lkg.13.14}\mathcal{L}_{Re,\varphi_c}=\displaystyle\left(\begin{array}{cc}
                                                         -\partial^2_{x}+1-\log(|\varphi_c|^p)-p & -c \\
                                                         -c &
                                                         1
                                                       \end{array}\right)
\end{eqnarray} and \begin{eqnarray}\label{lkg.14.14}\mathcal{L}_{Im,\varphi_c}=\displaystyle\left(\begin{array}{cc}
                                                         -\partial^2_{x}+1-\log(|\varphi_c|^p) & c \\
                                                         c & 1
                                                       \end{array}\right).\end{eqnarray}
The approach in \cite{natali1} can be used to conclude that the
diagonal operator $\mathcal{L}_{\varphi_c}$ has only one negative
eigenvalue which is simple, zero is double eigenvalue with
$$\ker(\mathcal{L}_{\varphi_c})=\textrm{span}\{(\varphi_c',c\varphi_c',0,0),(0,0,\varphi_c,-c\varphi_c)\}.$$
Moreover, the remainder of the spectrum is discrete and bounded away
from zero.\\
\indent Finally, the stability of periodic
waves can be determined provided that the following condition holds
\begin{eqnarray}\label{lkg.56.1}\displaystyle\left\langle\mathcal{L}_{Re,\varphi_c}^{-1}\displaystyle\left(\begin{array}{c}
                                                                                             c\varphi_c \\
                                                                                             \varphi_c
                                                                                           \end{array}
\right),\displaystyle\left(\begin{array}{c}
                                                                                             c\varphi_c \\
                                                                                             \varphi_c
                                                                                           \end{array}
\right)\right\rangle_{2,2}&=&\displaystyle\int_0^L\varphi_c^2\
dx+c\displaystyle\frac{d}{dc}\displaystyle\left(\displaystyle\int_0^L\varphi_c^2\
dx\right)=-d''(c)<0.\nonumber\end{eqnarray} \indent In our case, we
establish that $d''(c)>0$ provided that
$c^2\in\left(\frac{p}{4},1\right)$ and $p=1,2,3$.\\

This paper is organized as follows: in Section 2 we present some
basic notations and results which will be useful in the whole paper. In
Section 3 we study existence, uniqueness and existence of
conservation laws related to the model $(\ref{KG1})$. Finally, the
orbital stability of periodic waves will be shown in Section 4.

\section{Preliminary Results}

In this section, some basic notation and results are presented in
order to give a complete explanation of the arguments discussed in our
paper. The arguments below
can be found in \cite{iorio1}. \\

The $L^2$-based Sobolev spaces of periodic functions are defined as
follows: if $\mathcal{P}=C_{per}^{\infty}$ denotes the collection of
all functions $f:\mathbb{R}\rightarrow\mathbb{C}$ which are
$C^{\infty}$ and periodic with period $L>0$, collection
$\mathcal{P}'$ of all continuous linear functionals from
$\mathcal{P}$ into $\mathbb{C}$. \\
\indent If $\Psi\in\mathcal{P}'$ we denote the evaluation of $\Psi$ at
$\varphi\in\mathcal{P}$ by $\Psi(\varphi)=[\Psi,\varphi]$. For $k\in\mathbb{Z}$, consider
$\displaystyle\Lambda_k(x)=e^{\frac{2\pi ik x}{L}}$, $x\in\mathbb{R}$ and $k\in\mathbb{Z}$.
The Fourier transform of $\Psi\in\mathcal{P}'$ is a function
$\widehat{\Psi}:\mathbb{Z}\rightarrow\mathbb{C}$ defined by $
\widehat{\Psi}(k)=\frac{1}{L}[\Psi,\Lambda_k]$, $ k\in\mathbb{Z}$.
$\widehat{\Psi}(k)$ are called the Fourier coefficients of $\Psi$.
As usual, a function $f\in L_{per}^p([0,L])$, $p\geq1$ is an element
of $\mathcal{P}'$ by defining
$$
[f,g]=\displaystyle\frac{1}{L}\int_{0}^{L}f(x)g(x)dx,
\ \ \ \  g\in\mathcal{P}.
$$
\indent We denote by $C_{per}:=C_{per}^0$ the space of the continuous and $L$-periodic
functions. For $s\in\mathbb{R}$, the Sobolev space
$H_{per}^{s}([0,L]):=H_{per}^s$ is the set of all $f\in\mathcal{P}'$
such that
$$\displaystyle||f||_{H_{per}^s}^2:=||f||_{s}^{2}\equiv L\sum_{k=-\infty}^{+\infty}(1+|k|^2)^s|\widehat{f}(k)|^2
<\infty,$$
where $\widehat{f}$ indicates the periodic Fourier transform defined as above.\\
\indent Collection $H_{per}^s$ is a Hilbert space with inner
product
$$( f,g)_{H_{per}^s}:=( f,g)_{s}=L\sum_{k=-\infty}^{+\infty}(1+|k|^2)^s
\widehat{f}(k)\overline{\widehat{g}(k)}.
$$
When $s=0$, $H_{per}^s([0,L])$ is a Hilbert space which is
isometrically isomorphic to a subspace of $L^2([0,L])$ and
$(f,g)_{H_{per}^0}=(f,g)_{L_{per}^2}=\displaystyle\int_{0}^{L}f(x)\overline{g(x)}dx$.
Space $H_{per}^0$ will be denoted by $L_{per}^2$ and its norm will
be $||.||_{L^2_{per}}$. Of course, $H_{per}^s\subseteq L_{per}^2$, $s\geq0$, and we have the Sobolev embedding
$H_{per}^s\hookrightarrow C_{per}$, $s>\frac{1}{2}$. For $s\geq0$, we denote
$H_{per,e}^s([0,L])$ as the closed subspace of $H_{per}^s([0,L])$
constituted by even periodic functions. If $H$ is a Hilbert space, we denote by $\langle \cdot,\cdot\rangle_{H',H}$ the duality pair. In particular, if $H=L_{per}^2([0,L])$ we are able to write $H'=L_{per}^2([0,L])$ with $\langle \cdot,\cdot\rangle_{L_{per}^2,L_{per}^2}=(\cdot,\cdot)_{L_{per}^2}$.\\

\indent Next propositions are technical results used in our
manuscript.
\begin{proposition}\label{p.bc.gl.02} Consider $\alpha_1,\ \alpha_2\in\mathbb{C}$
satisfying $|\alpha_1|\leq|\alpha_2|$. Then,
\begin{eqnarray*}|\alpha_1\log(|\alpha_1|)-\alpha_2\log(|\alpha_2|)|\leq [1+|\log(|\alpha_2|)|]\ |\alpha_1-\alpha_2|.\end{eqnarray*}
\end{proposition}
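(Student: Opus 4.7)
My plan is to reduce the inequality to the elementary estimate $\log s \le s-1$ for $s\ge 1$. First I handle the degenerate case $\alpha_1=0$ separately (using the convention $0\log 0 =0$), since there the bound collapses to $|\log(|\alpha_2|)|\le 1+|\log(|\alpha_2|)|$ after dividing by $|\alpha_2|$.

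For the generic case with $\alpha_1\ne 0$, the natural move is an add/subtract split that isolates the two sources of discrepancy between $\alpha_1\log|\alpha_1|$ and $\alpha_2\log|\alpha_2|$: the change in the logarithmic factor and the change in the linear factor. Concretely I write
\begin{equation*}
\alpha_1\log|\alpha_1|-\alpha_2\log|\alpha_2|
= \alpha_1\bigl[\log|\alpha_1|-\log|\alpha_2|\bigr]+(\alpha_1-\alpha_2)\log|\alpha_2|,
\end{equation*}
and apply the triangle inequality. The second term immediately contributes $|\alpha_1-\alpha_2|\,|\log|\alpha_2||$, which is exactly what we want.

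The remaining task is to absorb the first term into $|\alpha_1-\alpha_2|$. Since $|\alpha_1|\le |\alpha_2|$, I can rewrite $|\log|\alpha_1|-\log|\alpha_2||=\log(|\alpha_2|/|\alpha_1|)$, so the piece to estimate is $|\alpha_1|\log(|\alpha_2|/|\alpha_1|)$. Setting $s=|\alpha_2|/|\alpha_1|\ge 1$, the concavity of the logarithm yields $\log s \le s-1$, which multiplied by $|\alpha_1|$ gives $|\alpha_1|\log s\le |\alpha_2|-|\alpha_1|$. Finally, the reverse triangle inequality $|\alpha_2|-|\alpha_1|\le |\alpha_1-\alpha_2|$ completes the bound.

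The only subtle point is the degenerate case and making sure the one-sided hypothesis $|\alpha_1|\le |\alpha_2|$ is used correctly (it is what turns the absolute value $|\log(|\alpha_1|/|\alpha_2|)|$ into $\log(|\alpha_2|/|\alpha_1|)$ and what makes the ratio $s\ge 1$, so that $\log s\le s-1$ applies). Everything else is a one-line triangle inequality, so I expect the proof to be very short and entirely elementary.
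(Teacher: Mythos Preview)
Your proof is correct. The paper does not actually prove this proposition but simply cites \cite[Chapter II, Lemma 2.4.3]{cazenave}; your argument---the add/subtract decomposition together with $\log s\le s-1$ for $s\ge 1$ and the reverse triangle inequality---is precisely the standard elementary proof one finds in that reference.
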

\begin{proof} See \cite[Chapter II, Lemma
2.4.3]{cazenave}.\end{proof}

\begin{proposition}\label{t.bc.gl.5} Consider $\alpha>0$ and
$v_0\in\displaystyle\left[0,\displaystyle\frac{1}{e}\right]$. Let
$v\in L^{\infty}(0,T)$ be a non-negative function. If
\begin{eqnarray*}v(t)\leq v_0-\alpha\displaystyle\int_0^tv(s)\log(v(s))\
ds\ \ \ \ a.e.\ \ t\in [0,T],\end{eqnarray*} then, $$v(t)\leq
\displaystyle\left(v_0\right)^{e^{-\alpha t}}$$  with $$0\leq
t\leq\inf\displaystyle\left\{\displaystyle\frac{\log(-\log(v_0))}{\alpha},T\right\}.$$
\end{proposition}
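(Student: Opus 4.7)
The plan is to convert the given integral inequality into a differential comparison with the explicit solution of the autonomous ODE $W' = -\alpha W\log W$. Set
$$V(t) := v_0 - \alpha\int_0^t v(s)\log(v(s))\,ds,$$
so that $V$ is absolutely continuous with $V(0)=v_0$ and $v(t)\le V(t)$ a.e.; moreover, $V'(t) = -\alpha v(t)\log(v(t))$ for a.e.\ $t\in[0,T]$. The aim is to show $V(t)\le v_0^{e^{-\alpha t}}$ on the interval described in the statement.

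The key ingredient is that the map $h(x):=-x\log(x)$ (with $h(0):=0$) is non-decreasing on $[0,\tfrac{1}{e}]$, since $h'(x)=-1-\log(x)\ge 0$ there. Hence, as long as $V(t)\le \tfrac{1}{e}$, the inequality $v(t)\le V(t)$ together with the monotonicity of $h$ yields
$$V'(t)=-\alpha v(t)\log(v(t)) \le -\alpha V(t)\log(V(t))\quad\text{a.e.}$$
Assuming $v_0>0$ (if $v_0=0$, a short direct argument yields $V\equiv 0$ and hence $v\equiv 0$), I would next substitute $\phi(t):=-\log V(t)>0$, which transforms the previous inequality into $\phi'(t)\ge -\alpha\phi(t)$. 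Multiplying by the integrating factor $e^{\alpha t}$ gives $(\phi\, e^{\alpha t})'\ge 0$, so $\phi(t)\ge \phi(0)e^{-\alpha t} = -(\log v_0)\,e^{-\alpha t}$, which rearranges to
$$V(t)\le v_0^{\,e^{-\alpha t}},$$
and the sought bound on $v$ follows.

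The principal obstacle is the bootstrap: the monotonicity step presupposes $V(t)\le \tfrac{1}{e}$, which is precisely the sort of pointwise information one hopes to derive. To deal with this I would introduce
$$T^* := \sup\bigl\{t\in[0,T]:V(s)\le \tfrac{1}{e}\text{ for every }s\in[0,t]\bigr\},$$
which is strictly positive by continuity of $V$ when $v_0<\tfrac{1}{e}$ (the boundary case $v_0=\tfrac{1}{e}$ forces the admissible time range in the statement to collapse to $\{0\}$, where the inequality is trivial). On $[0,T^*)$ the derivation above applies and produces $V(t)\le v_0^{e^{-\alpha t}}$, and an elementary check shows $v_0^{e^{-\alpha t}}\le \tfrac{1}{e}$ exactly when $t\le \alpha^{-1}\log(-\log v_0)$. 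Consequently the hypothesis $V\le \tfrac{1}{e}$ is self-sustaining on the interval described in the statement, one has $T^*\ge \min\{\alpha^{-1}\log(-\log v_0),T\}$, and the pointwise bound $v(t)\le v_0^{e^{-\alpha t}}$ follows throughout.
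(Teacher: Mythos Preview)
Your argument is correct and self-contained. Note that the paper does not actually prove this proposition: it simply cites \cite[Chapter~III, Corollary~2.1.2]{cazenave}. What you have written is precisely the standard proof one finds there---define the majorant $V$ as the right-hand side of the integral inequality, exploit the monotonicity of $x\mapsto -x\log x$ on $[0,1/e]$ to obtain the differential inequality $V'\le -\alpha V\log V$, linearize via $\phi=-\log V$, and close with a continuity bootstrap to justify the a priori assumption $V\le 1/e$ on the stated time interval. The edge cases $v_0=0$ and $v_0=1/e$ are handled appropriately.
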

\begin{proof} See \cite[Chapter III, Corollary
2.1.2]{cazenave}.\end{proof}

%\begin{proposition}[Logarithmic Gronwall Inequality]\label{t.bc.gl.4} Let $w_0\geq
%0$ and $\alpha\geq 1$ be constants. Consider a function $w\in
%L^{\infty}(0,T)$, $w\geq 0$, such that
%\begin{eqnarray*}w(t)\leq w_0+\alpha\displaystyle\int_0^tw(s)\log\ (\alpha+w(s))\
%ds,\ \  \textrm{a.e.}\ \ t\in [0,T].\end{eqnarray*} Then,
%\begin{eqnarray*}w(t)\leq (\alpha+w_0)^{\exp({\alpha t})},\ \  \textrm{a.e.}\ \ t\in [0,T].\end{eqnarray*}
%\end{proposition}
%\begin{proof}
%See \cite[Chapter III, Corolary 2.1.3]{cazenave}.
%\end{proof}

\textbf{Important Remark:}  Since $u$ found in $(\ref{KG1})$ is a complex valued function, the notion of stability will be considered over the complex space
$X=H_{per}^1([0,L])\times L_{per}^2([0,L])$. However, in some particular cases, it is convenient to consider real spaces $H_{per}^s([0,L])$, $s\geq0$, instead of complex ones (see Section 4). In this paper, we will not distinguish whether this space is real or complex.

\section{Existence and Uniqueness of Weak
Solutions}\label{sec.exist.}

Next, we establish results of existence and uniqueness of weak
solutions. Let us consider $(u_0,u_1)\in H^1_{per}([0,L])\times
L^2_{per}([0,L])$. First, we use Galerkin's method to prove the
existence of weak solutions for the following Cauchy
problem
\begin{eqnarray}\label{gl.1}\displaystyle\left\{\begin{array}{l}
                                       u_{tt}-u_{xx}+u-\log(|u|^p)u=0,\ \ (x,t)\in\mathbb{R}\times [0,T].\\
                                       u(x,0)=u_0(x),\ \ u_{t}(x,0)=u_1(x),\ \
                                       x\in\mathbb{R}.\\
                                       u(x+L,t)=u(x,t)\ \ \textrm{for\ all}\
                                       t\in [0,T],\ \ x\in
                                       \mathbb{R},
                                     \end{array}\right.\end{eqnarray}
where $p\in \mathbb{N}$, $L>0$ and $T>0$. Regarding the uniqueness
of weak solutions, we need to use a non-standard analysis to rewrite
the weak solution into a convenient integral form. A consequence of
this last fact enables us to deduce the existence of two conserved
quantities $\mathcal{E}$ and $\mathcal{F}$ as in $(\ref{gl.108.1})$
and $(\ref{gl.109.1})$.

\begin{remark} \textit{ Once obtained the results contained in Theorem $\ref{t.bc.gl.1.1}$, we can apply the arguments in \cite{lions}
to establish the following smoothness result. Indeed, if $u$ is a
weak solution to the problem (\ref{gl.1}), one has
$$u\in C^0([0,T];L^2_{per}([0,L]))\cap C_s(0,T;H^1_{per}([0,L]))$$ and $$u_t\in C^0([0,T];H^{-1}_{per}([0,L]))\cap
C_s(0,T;L^2_{per}([0,L])).$$
Here, $C_s(0,T;H)$ indicates the space of weakly continuous functions in the Hilbert space $H$, that is, the set of $f\in L^{\infty}(0,T;H)$ such that the map $t\mapsto \langle f(t),v\rangle_{H',H}$ is continuous over $[0,T]$ for all $v\in H$. }
\end{remark}

Next, we present the proof of Theorem $\ref{t.bc.gl.1.1}$ by splitting it into two parts. The first one concerns the existence and uniqueness of global weak solutions.
\begin{proposition}\label{t.bc.gl.1} Let $p$ be a positive integer and consider $L>0$. There exists a unique global weak solution
to the problem (\ref{gl.1}) in the sense as mentioned in Theorem $\ref{t.bc.gl.1.1}$.
\end{proposition}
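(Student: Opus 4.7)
The plan is the scheme announced just after Theorem~\ref{t.bc.gl.1.1}. I fix a complete orthonormal system $\{\omega_\nu\}_{\nu\in\mathbb{N}}$ of $L^2_{per}([0,L])$ that is also orthogonal in $H^1_{per}([0,L])$ (for instance the Fourier basis), set $V_m=[\omega_1,\dots,\omega_m]$, and approximate the Cauchy data by $u_{0,m}\to u_0$ in $H^1_{per}$ and $u_{1,m}\to u_1$ in $L^2_{per}$. I then seek
\[
u_m(x,t)=\sum_{\nu=1}^m g_{\nu,m}(t)\omega_\nu(x)
\]
solving the projected equation
\[
(u_m'',\omega_\nu)_{L^2_{per}}+(\partial_x u_m,\partial_x\omega_\nu)_{L^2_{per}}+(u_m,\omega_\nu)_{L^2_{per}}=(u_m\log(|u_m|^p),\omega_\nu)_{L^2_{per}},\quad \nu=1,\dots,m,
\]
with the prescribed Cauchy data. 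Since the map $z\mapsto z\log|z|^p$ is continuous on $\mathbb{C}$ (with $0\log 0=0$) though not Lipschitz, Carath\'eodory's theorem applied to the resulting first-order system in the $g_{\nu,m}$ produces a local $C^1$ solution.

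\textbf{Uniform bounds and passage to the limit.} Testing with $u_m'$ and taking the real part gives conservation of the approximate energy, $\mathcal{E}(u_m,u_m')=\mathcal{E}(u_{0,m},u_{1,m})$. The elementary estimate $|s^2\log s^2|\leq C(s^2+s^{2+\delta})$ combined with $H^1_{per}\hookrightarrow L^\infty$ shows that $\mathcal{E}$ coercively controls $\|u_m\|_{H^1_{per}}^2+\|u_m'\|_{L^2_{per}}^2$ uniformly in $m$ and $t\in[0,T]$. Hence $u_m$ extends globally in time and, using the equation for the second time derivative,
\[
u_m\in L^\infty(0,T;H^1_{per}),\quad u_m'\in L^\infty(0,T;L^2_{per}),\quad u_m''\in L^\infty(0,T;H^{-1}_{per})
\]
with bounds uniform in $m$. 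Banach--Alaoglu selects a weak-$*$ limit $u$ with the regularity claimed in Theorem~\ref{t.bc.gl.1.1}. Aubin--Lions provides strong convergence of $u_m$ to $u$ in $L^2(0,T;L^2_{per})$ and pointwise a.e.; Proposition~\ref{p.bc.gl.02} together with the uniform $L^\infty_{t,x}$ bound then upgrades this to convergence of $u_m\log(|u_m|^p)$ in $L^2(0,T;L^2_{per})$, so I can pass to the limit in the projected equation and recover the weak formulation.

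\textbf{Uniqueness and the conservation laws.} The main obstacle is uniqueness, because the non-Lipschitz nonlinearity rules out the standard contraction argument. Following the roadmap around (\ref{eqint}), let $u,v$ be two weak solutions with identical data and set $w=u-v$. Subtraction gives $w_{tt}-w_{xx}=F$ with $F=\bigl[u\log(|u|^p)-v\log(|v|^p)\bigr]-w$, so on any $[0,T_0]$ with $T_0<L/4$ the d'Alembert representation (\ref{eqint}) applies. Proposition~\ref{p.bc.gl.02} yields the pointwise bound
\[
|F(y,\tau)|\leq\bigl(1+p+p|\log\max(|u(y,\tau)|,|v(y,\tau)|)|\bigr)|w(y,\tau)|,
\]
and the uniform $L^\infty_{t,x}$ bounds on $u,v$ make the coefficient integrable in $y$ (the logarithmic singularity being absorbed by any H\"older power). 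Squaring (\ref{eqint}), applying Cauchy--Schwarz in $\tau$ and $y$, integrating in $x\in[0,L]$, and handling the region where $|w|$ is small separately produces an inequality
\[
\phi(t)\leq -C\int_0^t\phi(s)\log\phi(s)\,ds,\qquad \phi(t):=\|w(\cdot,t)\|_{L^2_{per}}^2,\quad \phi(0)=0.
\]
Proposition~\ref{t.bc.gl.5} applied with $v_0=0$ then forces $\phi\equiv 0$ on $[0,T_0]$, and iterating on consecutive time intervals of length $T_0$ yields uniqueness on $[0,T]$. With uniqueness in hand, the Lions trick cited after (\ref{eqcons}) upgrades the weak-limit inequalities (\ref{eqcons}) to the equalities (\ref{cons-quant-1}), completing the proof.
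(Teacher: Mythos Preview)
Your existence argument follows the paper's Galerkin scheme, but your claim that the conserved energy $\mathcal{E}$ ``coercively controls $\|u_m\|_{H^1_{per}}^2+\|u_m'\|_{L^2_{per}}^2$'' is too quick: the term $-\int|u_m|^2\log|u_m|^p$ is unbounded below on $H^1$ balls, and the bound $|s^2\log s^2|\le C(s^2+s^{2+\delta})$ only gives you an inequality of the form $\|u_m'\|^2+\|u_m\|_{H^1}^2\le C+C\|u_m\|_{H^1}^{\delta}\|u_m\|_{L^2}^2$, which still needs a Gronwall--Bellman--Bihari step (as the paper does) to close; the resulting bounds depend on $T$, not just on $\mathcal{E}(u_0,u_1)$.

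The serious gap is in uniqueness. Two issues. First, from Proposition~\ref{p.bc.gl.02} you obtain the coefficient $|\log\max(|u|,|v|)|$, and you assert that the $L^\infty$ bounds on $u,v$ make this ``integrable''; but those bounds only control $|\log\max|$ where $\max\ge 1$, while near the zeros of $u,v$ the coefficient blows up. The paper's key observation (the case analysis leading to (\ref{gl.104})) is that when $\max(|u|,|v|)\le 1$ one has $|w|\le 2\max$, hence $|\log\max|\le|\log|w||+\log 2$, which converts the estimate into $|F|\le C(1+|\log|w||)\,|w|$, a bound purely in terms of $w$. Without this step your pointwise inequality is not in a usable form. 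Second, even granting $|F|\le C(1+|\log|w||)\,|w|$, your $L^2$ route fails: squaring and integrating yields $\phi(t)\lesssim\int_0^t\int_0^L|\log|w||^2|w|^2\,dx\,d\tau$, and there is no inequality $\int_0^L|\log|w||^2|w|^2\,dx\le -C\,\phi\log\phi$ in general (take $w$ equal to $\sqrt{\varepsilon}$ on an interval of length $\varepsilon$ to see the left side is of order $\varepsilon^2(\log\varepsilon)^2$ while the right is only $\varepsilon^2|\log\varepsilon|$). The paper works in $L^\infty$ precisely because $s\mapsto -s\log s$ is increasing on $(0,1/e)$, so that $\|\,w\log|w|\,\|_{L^\infty_{per}}=-\|w\|_{L^\infty_{per}}\log\|w\|_{L^\infty_{per}}$ once $\|w\|_{L^\infty_{per}}<1/e$; this is exactly what feeds Proposition~\ref{t.bc.gl.5}. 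Switch your uniqueness argument to the $L^\infty$ norm and insert the case splitting that produces (\ref{gl.104}).

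Finally, the conservation-law upgrade you sketch at the end is the content of a separate proposition (Proposition~\ref{prop-cons-quant}) and not part of the present statement; it is fine to mention, but note that the actual argument there is more delicate than a one-line ``Lions trick'' and involves a mollification in time together with the weak continuity of $u$ and $u_t$.
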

\textit{Proof.} We perform the proof of this result by showing two basics steps. The first one corresponds to
the existence of weak solutions (which contains the
construction of the approximate problem, a priori estimates and
passage to the limit). After that, we show the uniqueness of weak solutions. \\

Step 1. \textit{Existence of weak solutions.}\\

Here, the main point is to use Galerkin's method to determine the
existence of weak solutions related to the problem $(\ref{gl.1})$.
Firstly, we need to present the approximate solution $u_m$,
$m\in\mathbb{N}$. After that, we obtain a priori estimate to give an
uniform bound for the approximate solution in a convenient space.
This last fact enables us to consider this solution for all values
$T>0$ and for $m$ large, we can pass to the limit to obtain a weak solution $u$.\\
\indent Indeed, let
$\{\omega_j\}_{j\in\mathbb{N}}$ be a sequence of periodic eigenfunctions
satisfying
\begin{equation}\label{eigen-val-p}\displaystyle\left\{\begin{array}{l}
                                       -\partial_x^2\omega_j+\omega_j=\lambda_j\omega_j\ \ \textrm{in}\ \ [0,L]. \\
                                       \omega_j(0)=\omega_j(L).
                                     \end{array}
\right.\end{equation}
Using Fourier analysis, we can choose $\{\omega_j\}_{j\in\mathbb{N}}$ such
that \begin{itemize}
\item $\{\omega_{j}\}_{j\in\mathbb{N}}$ is a complete orthonormal set
in $L^2_{per}([0,L])$;
\item $\{\omega_{j}\}_{j\in\mathbb{N}}$ is a complete orthogonal set in $H^1_{per}([0,L])$;
\item
$\{\omega_{j}\}_{j\in\mathbb{N}}$ is smooth.
\end{itemize}

\indent By $(\ref{eigen-val-p})$ it is possible to give a characterization of the eigenfunctions $\omega_{j}$ and eigenvalues $\lambda_{j}$, $j\in\mathbb{N}$, as
$$\{\omega_j\}_{j\in\mathbb{N}}=\left\{\frac{e^{\frac{2i\pi j \cdot}{L}}}{L}\right\}_{j\in\mathbb{N}}\ \ \ \ \ \mbox{and}\ \ \ \ \ \ \{\lambda_j\}_{j\in\mathbb{N}}=\left\{\left(\frac{2\pi j}{L}\right)^2+1\right\}_{j\in\mathbb{N}}.$$
\indent Consider $V_m$ the subspace spanned by the $m$ first
eigenfunctions $\omega_1,\ \omega_2,\ \ldots, \omega_m$. Let
$u_m(t)\in V_m$ be the function defined as
\begin{eqnarray}\label{gl.2}
u_m(t)=\displaystyle\sum_{j=1}^mg_{jm}(t)\omega_j.\end{eqnarray}
\indent We see that solution $u_m$ in (\ref{gl.2}) solves the
following approximate problem
\begin{eqnarray}\label{gl.3}\displaystyle\left\{\begin{array}{l}
                                      \langle u_m''(t),{\omega}_j\rangle_{L^2_{per},L^2_{per}}+\langle \nabla
u_m(t),\nabla
{\omega}_j\rangle_{L^2_{per},L^2_{per}}=\\
-\langle
u_m(t),{\omega}_j\rangle_{L^2_{per},L^2_{per}}+\langle u_m(t)\log(|u_m(t)|^p),{\omega}_j\rangle_{L^2_{per},L^2_{per}},\ \ \ j\in\{1,2,\ldots,m\}.\\
                                                  \\
u_m(0)=u_{0,m}=\displaystyle\sum_{j=1}^m\alpha_{jm}\omega_j,\ \alpha_{jm}=\langle u_0,{\omega}_j\rangle_{L^2_{per},L^2_{per}}, \\
                                                   \\
u_m'(0)=u_{1,m}=\displaystyle\sum_{j=1}^m\beta_{jm}\omega_j,\ \
\beta_{jm}=\langle u_1,{\omega}_j\rangle_{L^2_{per},L^2_{per}}.
\end{array} \right.\end{eqnarray} \indent Thus, Caratheodory's
Theorem can be used to establish that (\ref{gl.3}) has an absolutely
continuous solution $u_m$ given by (\ref{gl.2}), defined on an
interval $[0,T_m)$, where $0<T_m\leq T$. Next, we need to show good
bounds for the solution $u_m$ to guarantee that solution $u_m$ can
be defined for all values of
$T>0$.\\

\indent Multiplying both sides of the first identity in
(\ref{gl.3}) by $\overline{g_{jm}'(t)}$ and adding the final result
in $j=1,2,\ldots,m$, one has
\begin{eqnarray*}&&\displaystyle\frac{d}{dt}\left(\|u_m'(t)\|^2_{L^2_{per}}\right)+\displaystyle\frac{d}{dt}\left(\|\nabla
u_m(t)\|^2_{L^2_{per}}\right)+\displaystyle\left(1+\displaystyle\frac{p}{2}\right)\displaystyle\frac{d}{dt}\left(\|u_m(t)\|^2_{L^2_{per}}\right)\nonumber\\
\nonumber\\
&&={p}\displaystyle\left[\displaystyle\frac{d}{dt}\left(\displaystyle\int_0^L|u_m(t)|^2\log(|u_m(t)|)\
dx\right)\right].\nonumber\end{eqnarray*} Hence for all $t\in
[0,T]$, we obtain
\begin{eqnarray}\label{gl.23}\begin{array}{l}\|u_m'(t)\|^2_{L^2_{per}}+\|\nabla
u_m(t)\|^2_{L^2_{per}}+\displaystyle\left(1+\displaystyle\frac{p}{2}\right)\|u_m(t)\|^2_{L^2_{per}}-p\displaystyle\int_0^L|u_m(t)|^2\log(|u_m(t)|)\\
\\
=\|u_{1,m}\|^2_{L^2_{per}}+\|\nabla
u_{0,m}\|^2_{L^2_{per}}+\displaystyle\left(1+\displaystyle\frac{p}{2}\right)\|u_{0,m}\|^2_{L^2_{per}}-p\displaystyle\int_0^L|u_{0,m}|^2\log(|u_{0,m}|)\
dx.\end{array}\end{eqnarray} Since
$u_{0,m}\stackrel{m\rightarrow\infty}{\longrightarrow} u_0$ in
$H^1_{per}([0,L])$ and
$u_{1,m}\stackrel{m\rightarrow\infty}{\longrightarrow} u_1$ in
$L^2_{per}([0,L])$, there exists a constant $C_1>0$ such that
\begin{eqnarray*}\|u_{1,m}\|^2_{L^2_{per}}+\|\nabla
u_{0,m}\|^2_{L^2_{per}}+\displaystyle\left(1+\displaystyle\frac{p}{2}\right)\|u_{0,m}\|^2_{L^2_{per}}\leq
C_1\ \textrm{for\ all}\ m\in\mathbb{N}.\end{eqnarray*}

Furthermore, the Sobolev embedding $H^1_{per}([0,L])\hookrightarrow
L^{\infty}_{per}([0,L])$ yields
\begin{eqnarray*}\displaystyle\left|p\displaystyle\int_0^L|u_{0,m}|^2\log(|u_{0,m}|)\ dx\right|\leq C_2\ \textrm{for\ all}\
m\in\mathbb{N}.\end{eqnarray*} Define the constant $C_3:=C_1+C_2$.
We can write
\begin{eqnarray}\begin{array}{l}\label{gl.26}\|u_m'(t)\|^2_{L^2_{per}}+\|\nabla
u_m(t)\|^2_{L^2_{per}}+\displaystyle\left(1+\displaystyle\frac{p}{2}\right)\|u_m(t)\|^2_{L^2_{per}}\\
\\
\leq
C_3+\displaystyle\frac{p}{2}\displaystyle\int_0^L|u_m(t)|^2\log(|u_m(t)|^2)\
dx\ \ \textrm{for\ all}\ m\in\mathbb{N}.\end{array}\end{eqnarray}

\indent In order to estimate the right-hand-side of the inequality
(\ref{gl.26}), we see that
\begin{eqnarray}\label{novo1}\displaystyle\int_0^L|u_m(t)|^2\log(|u_m(t)|)\ dx\leq \displaystyle\int_0^L|u_m(t)|^3\ dx\leq \|u_m(t)\|_{L^{\infty}_{per}}\|u_m(t)\|_{L^{2}_{per}}^2.\end{eqnarray}

\indent Using the Sobolev embedding
$H^1_{per}([0,L])\hookrightarrow L^{\infty}_{per}([0,L])$ and an
application of Young's inequality we obtain the existence of
a constant $C_4>0$ such that
\begin{eqnarray}\label{novo2}\displaystyle\int_0^L|u_m(t)|^2\log(|u_m(t)|)\ dx&\leq& C_4\|u_m(t)\|_{H^{1}_{per}}\|u_m(t)\|_{L^{2}_{per}}^2\nonumber\\
\\
&\leq& C_4\displaystyle\left[\displaystyle\frac{\varrho
\|u_m(t)\|^2_{H^1_{per}}}{2}+\displaystyle\frac{\|u_m(t)\|^4_{L^2_{per}}}{2\varrho}\right],\nonumber\end{eqnarray}
where $\varrho>0$ is a fixed parameter.

\indent Let $\varrho>0$ be sufficiently small. By combining
(\ref{gl.26}) and (\ref{novo2}), there exists a constant $C_5>0$
such that
\begin{eqnarray}\begin{array}{l}\label{gl.26.5}\ \ \|u_m'(t)\|^2_{L^2_{per}}+\|\nabla
u_m(t)\|^2_{L^2_{per}}+\|u_m(t)\|^2_{L^2_{per}}\leq
C_5+C_5\displaystyle\left[\|u_m(t)\|^2_{L^2_{per}}\right]^2\
\textrm{for\ all}\ m\in\mathbb{N}.\end{array}\end{eqnarray}

Next, let us denote
$$u_m(t)=u_{m}(0)+\displaystyle\int_0^t\displaystyle\frac{\partial
u_m}{\partial \varsigma}(\varsigma)\ d\varsigma.$$ Hence,
\begin{eqnarray*}\|u_m(t)\|^2_{L^2_{per}} &\leq&
2\|u_{0,m}\|^2_{L^2_{per}}+2\displaystyle\left\|\displaystyle\int_0^t\displaystyle\frac{\partial
u_m}{\partial \varsigma}(\varsigma)\ d\varsigma\right\|^2_{L^2_{per}}\nonumber\\
\nonumber\\
&\leq&C_6+2\displaystyle\int_0^t\|u_m'(\varsigma)\|^2_{L^2_{per}}\
d\varsigma,\end{eqnarray*} where $C_6>0$ is a constant. Using
inequality (\ref{gl.26.5}),
\begin{eqnarray}\label{gl.38}\|u_m(t)\|^2_{L^2_{per}}\leq C_7+C_7\displaystyle\int_0^t\displaystyle\left[\|u_m(\varsigma)\|^2_{L^2_{per}}\right]^2\ d\varsigma\
\textrm{\ for\ all\ } m\in\mathbb{N},
\end{eqnarray} where $C_7>0$ is a constant.

From Gronwall-Bellman-Bihari inequality, there
exists $C_8(T)>0$ such that
\begin{eqnarray*}\|u_m(t)\|^2_{L^2_{per}}
\leq C_8(T)\ \textrm{for\ all} \ m\in\mathbb{N}.
\end{eqnarray*} In addition, by (\ref{gl.26.5}) there exists a constant $C_9(T)>0$
satisfying
\begin{eqnarray}\label{gl.39}\ \ \ \ \ \ \|u_m'(t)\|^2_{L^2_{per}}+\|\nabla
u_m(t)\|^2_{L^2_{per}}+\|u_m(t)\|^2_{L^2_{per}}\leq C_9(T)\
\end{eqnarray} for all $m\in\mathbb{N}$. By (\ref{gl.39}), we can deduce
that the interval of existence of approximate solutions can be
chosen as $[0,T]$ for all $T>0$. Furthermore,
\begin{eqnarray}\label{gl.41}\|u_m\|^2_{L^{\infty}(0,T;H^1_{per})}+\|u_m'\|^2_{L^{\infty}(0,T;L^2_{per})}\leq
C_9(T)\ \textrm{for\ all}\ m\in\mathbb{N}.\end{eqnarray}

The next step is to estimate $\|u_m''(t)\|_{H^{-1}_{per}}$. Consider
$v\in H^1_{per}([0,L])$ such that $\|v\|_{H^1_{per}}\leq 1$. %We
%understand that $$\langle
%u_m''(t),v\rangle_{H^{-1}_{per},H^{1}_{per}}=\displaystyle\left\langle\displaystyle\frac{d^2u_m}{dt^2}(t),v\right\rangle_{L^{2}_{per},L^{2}_{per}}.$$
Decompose $v:=v_1+v_2$, where $v_1\in \
\textrm{span}\{\omega_{\nu}\}_{\nu=1}^{m}$ and
\begin{eqnarray}\label{gl.42}0=\langle
v_2,\omega_{j}\rangle_{L^2_{per},L^2_{per}},\ \ \ \ \
j\in\{1,2,\ldots,m\}.\end{eqnarray} Identities in (\ref{gl.3}) and
(\ref{gl.42}) give us
\begin{eqnarray*}\langle u_m''(t),v\rangle_{L^2_{per},L^2_{per}}&=&-\langle
\nabla u_m(t),\nabla v_1\rangle_{L^2_{per},L^2_{per}}-\langle
u_m(t),v_1\rangle_{L^2_{per},L^2_{per}}\nonumber\\
\nonumber\\
&+&\langle
u_m(t)\log(|u_m(t)|^p),v_1\rangle_{L^2_{per},L^2_{per}}\end{eqnarray*}
and
\begin{eqnarray}\label{gl.45}|\langle u_m''(t),v\rangle_{L^2_{per},L^2_{per}}|&\leq&2\|u_m(t)\|_{H^1_{per}}\nonumber\\
\\
&+&\displaystyle\left(\displaystyle\int_0^L|u_m(t)\log\
(|u_m(t)|^{p})|\ dx\right)\|v_1\|_{L^{\infty}_{per}}.\nonumber
\end{eqnarray}

On the other hand, we can use inequality (\ref{gl.41}) to show that
$\|u_m(t)\|_{H^1_{per}}\leq \sqrt{C_{9}(T)}\ \textrm{for\ all\ }
m\in\mathbb{N}.$ Next, the Sobolev embedding
$H^1_{per}([0,L])\hookrightarrow L^{\infty}_{per}([0,L])$ and the
orthogonality of $\{\omega_{\nu}\}_{\nu\in\mathbb{N}}$ in
$H^1_{per}([0,L])$ enable us to conclude
$\|v_1\|_{L^{\infty}_{per}}\leq C_{10}\|v_1\|_{H^{1}_{per}}\leq
C_{10}\|v\|_{H^{1}_{per}}\leq C_{10},$ $C_{10}>0$ being a
constant. In addition, there exists a constant $C_{11}(T)>0$ such
that
\begin{eqnarray*}\displaystyle\int_0^L|u_m(t)\log\ (|u_m(t)|^p)|\ dx\leq
C_{11}(T)\ \textrm{for\ all}\ m\in\mathbb{N}.\end{eqnarray*}

Taking $C_{12}(T):=2\sqrt{C_{9}(T)}+C_{10}C_{11}(T)$, we have from
(\ref{gl.45}) and the final inequality
\begin{eqnarray*}|\langle
u_m''(t),v\rangle_{H^{-1}_{per},H^1_{per}}|\leq C_{12}(T)\
\textrm{for\ all}\ m\in\mathbb{N}.\end{eqnarray*} Since $v$ is arbitrary one has
\begin{eqnarray}\label{gl.49}\|u_m''\|_{L^{\infty}(0,T;H^{-1}_{per})}\leq
C_{12}(T)\ \textrm{for\  all}\  m\in\mathbb{N}.\end{eqnarray}\

\indent Inequalities (\ref{gl.41}) and (\ref{gl.49}) enable us to
consider a subsequence (still denoted by $\{u_m\}_{m\in\mathbb{N}}$)
and a function $u$ such that
\begin{eqnarray}\label{gl.50}\displaystyle\begin{array}{l}
                                             u_m\rightharpoonup u\ \ \
\textrm{weak\ in\ } \ \ L^2(0,T;H^1_{per}([0,L])), \\
                                             u_m'\rightharpoonup u'\ \ \ \textrm{weak\ in\ } \ \
L^2(0,T;L^2_{per}([0,L])), \\
                                             u_m''\rightharpoonup u''\ \ \ \textrm{weak\ in\ } \ \
L^2(0,T;H^{-1}_{per}([0,L])), \\
                                             u_m\stackrel{\star}{\rightharpoonup}u\ \ \ \textrm{weak-$\star$\ in\ } \ \
L^{\infty}(0,T;H^1_{per}([0,L])), \\
                                             u_m'\stackrel{\star}{\rightharpoonup}u'\ \ \ \textrm{weak-$\star$\ in\ } \ \ L^{\infty}(0,T;L^2_{per}([0,L])), \\
                                             u_m''\stackrel{\star}{\rightharpoonup}u''\ \ \
\textrm{weak-$\star$\ in\ } \ \ L^{\infty}(0,T;H^{-1}_{per}([0,L])).
                                           \end{array}
\end{eqnarray} \indent Moreover, using Aubin-Lions-Simon
Theorem (see \cite[Theorem II.5.16]{lions1} for more details) and the fact that $u_m$
is bounded in $L^2(0,T;H^1_{per}([0,L]))$, we can choose
$\{u_m\}_{m\in\mathbb{N}}$ such that
\begin{eqnarray}\label{gl.55}u_m\rightarrow u\ \ \ \textrm{strongly\ in\ } \ \
L^2(0,T;L^2_{per}([0,L])).\end{eqnarray} As a consequence of this
last convergence we obtain $u_m\longrightarrow u\ \ \ \textrm{a.e.\
\ in}\ \ \ [0,L]\times [0,T].$ Now, since the map $x\mapsto
x\log(|x|^p)$ is continuous, one has
\begin{eqnarray*}|u_m\log(|u_m|^p)-u\log(|u|^p)|\longrightarrow 0 \ \ \ \textrm{a.e.\ \ in}\ \ \
[0,L]\times [0,T].\end{eqnarray*}
\indent On the other hand, from the Sobolev
embedding $H_{per}^1([0,L])\hookrightarrow L_{per}^{\infty}([0,L])$ we obtain that $|u_m\log(|u_m|^p)-u\log(|u|^p)|$ is
bounded in $L^{\infty}([0,L]\times [0,T])$. Next, taking into
account the Lebesgue Dominated Convergence Theorem, we have
$$u_m\log(|u_m|^{p})\rightarrow u\log(|u|^{p})\ \ \ \textrm{strongly\ in\ }\ \
L^2(0,T;L^2_{per}([0,L])).$$ \indent Finally, one can pass to the limit at the equation (\ref{gl.3}) to obtain the existence of weak solutions
related to (\ref{gl.1}) in a standard form. We can
easily check that the initial conditions are also satisfied. \\

Step 2. \textit{Uniqueness of Solution.}\\

Let $u$ and $v$ be two weak solutions for the Cauchy
problem (\ref{gl.1}). Define $w:=u-v$. We see that
\begin{eqnarray}\label{gl.80}\begin{array}{l}\langle
w_{tt}(\cdot,t),\zeta\rangle_{H^{-1}_{per},H^1_{per}}+\displaystyle\int_0^L\nabla
w(\cdot,t)\cdot\overline{\nabla\zeta}\
dx+\displaystyle\int_0^Lw(\cdot,t)\overline{\zeta}\
dx\\
=\displaystyle\int_0^L[u(\cdot,t)\log(|u(\cdot,t)|^p)-v(\cdot,t)\log(|v(\cdot,t)|^p)]\overline{\zeta}\
dx\ \textrm{a.e.}\ t\in [0,T],\end{array}\end{eqnarray} for all
$\zeta\in H^1_{per}([0,L]).$ Furthermore, $w(\cdot,0)=0$ and
$w_t(\cdot,0)=0$. Our intention is to prove that $w\equiv 0$, that
is, the Cauchy problem in $(\ref{gl.1})$ has an unique weak
solution. To do so, we need to stop with the proof of the proposition for a while to present the following auxiliary lemma:
\begin{lemma}\label{p.bc.gl.01} Consider $0<T_0<\displaystyle\frac{L}{4}$ and $w$ an $L$-periodic function
at the spatial variable. Suppose that $w(\cdot,0)=0$ %$w_t(\cdot,0)=0$
and $f\in L^2(0,T_0;L^2_{per}([0,L]))$. Furthermore, let us assume that
\begin{eqnarray}\label{gl.81}\displaystyle\iint_{[0,L]\times[0,T_0]}w(\zeta_{tt}-\zeta_{xx})\ dx\ dt=\displaystyle\iint_{[0,L]\times [0,T_0]}f\zeta\ dx\
dt,\end{eqnarray} for all smooth real function $\zeta$ defined over
$\mathbb{R}\times [0,T_0]$ which is periodic at the spatial variable for all $t\in [0,T_0]$ and it satifies $\zeta(x,T_0)=\zeta_t(x,T_0)=0$ for all $x\in \mathbb{R}$.
Thus for all $(x,t)\in\mathbb{R}\times
[0,T_0]$ one has
\begin{eqnarray*}w(x,t)=\displaystyle\frac{1}{2}\displaystyle\int_0^{t}\int_{x-t+\tau}^{x+t-\tau}f(y,\tau)\ dy\
d\tau.\end{eqnarray*}
\end{lemma}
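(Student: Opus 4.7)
The plan is a duality argument. For an arbitrary smooth real $\phi$ compactly supported in $(0,L)\times(0,T_0)$ (extended $L$-periodically in $x$), I will construct an admissible test function $\zeta$ by solving the \emph{backward} inhomogeneous wave equation $\zeta_{tt}-\zeta_{xx}=\phi$ on $\mathbb{R}\times[0,T_0]$ with terminal conditions $\zeta(\cdot,T_0)=\zeta_t(\cdot,T_0)=0$. The change of variable $\tilde\zeta(x,s):=\zeta(x,T_0-s)$ turns this into the forward Cauchy problem $\tilde\zeta_{ss}-\tilde\zeta_{xx}=\phi(x,T_0-s)$ with zero initial data, which d'Alembert's formula solves explicitly; undoing the time reversal yields
\[
\zeta(x,t)=\tfrac{1}{2}\int_t^{T_0}\!\!\int_{x-(\tau-t)}^{x+(\tau-t)}\phi(y,\tau)\,dy\,d\tau.
\]
Because $\phi$ is smooth and $L$-periodic in $x$, uniqueness for the wave equation on $\mathbb{R}$ forces $\zeta$ to be smooth and $L$-periodic, so it is an admissible test in $(\ref{gl.81})$.

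Substituting $\zeta_{tt}-\zeta_{xx}=\phi$ into the hypothesis gives
\[
\iint_{[0,L]\times[0,T_0]} w\,\phi\,dx\,dt=\iint_{[0,L]\times[0,T_0]} f\,\zeta\,dx\,dt.
\]
I then insert the explicit formula for $\zeta$ and apply Fubini: for fixed $(y,\tau)$, a source point $(x,t)$ contributes iff $t\le \tau$ and $|x-y|\le \tau-t$, so after swapping the roles of $(x,t)$ and $(y,\tau)$ I obtain
\[
\iint w\,\phi\,dx\,dt=\iint \phi(y,\tau)\left[\tfrac{1}{2}\int_0^{\tau}\!\!\int_{y-(\tau-t)}^{y+(\tau-t)} f(x,t)\,dx\,dt\right]dy\,d\tau.
\]
By arbitrariness of $\phi$ together with a density argument in $L^2([0,L]\times[0,T_0])$, the bracketed quantity must coincide with $w(y,\tau)$ a.e., which is the claimed formula after relabeling variables.

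The assumption $T_0<L/4$ enters precisely at the construction of $\zeta$: each spatial window $[x-(t-\tau),x+(t-\tau)]$ appearing in d'Alembert's formula has length at most $2T_0<L/2$, so it embeds in a single period and no wrap-around of characteristics has to be accounted for; this is what lets the formula on $\mathbb{R}$ be used directly as the $L$-periodic evolution. The main technical obstacle I anticipate is not any algebra but rather the \textbf{admissibility check for $\zeta$}: one must verify that the test function produced by the backward d'Alembert formula from a smooth compactly supported $\phi$ is genuinely smooth, $L$-periodic in $x$, and satisfies the vanishing terminal conditions, so that it is legal to use in $(\ref{gl.81})$; the Fubini interchange is then routine since $f\in L^2(0,T_0;L^2_{per}([0,L]))$ and $\phi\in C_c^\infty$.
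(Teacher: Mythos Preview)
Your approach is correct and takes a genuinely different route from the paper's. The paper proceeds pointwise: for fixed $(x,t)$ it builds a family $\zeta_\varepsilon$ of smooth $L$-periodic test functions approximating the indicator of the backward light cone with apex at $(x,t)$, then computes the limits as $\varepsilon\to0^+$ of both sides of (\ref{gl.81}); the right side yields the double integral by dominated convergence, while the left side is split into three pieces whose evaluation uses $w(\cdot,0)=0$ explicitly to produce $2w(x,t)$. Your duality construction bypasses this limiting procedure entirely and is more streamlined: once $\zeta_{tt}-\zeta_{xx}=\phi$, the hypothesis reads $\iint w\phi=\iint f\zeta$ directly, and only Fubini plus density remain.

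Two refinements are worth flagging. First, the Fubini interchange deserves one more line of justification than you give: the regions $\{(x,y): x\in[0,L],\ |x-y|\le\tau-t\}$ and $\{(x,y): y\in[0,L],\ |x-y|\le\tau-t\}$ are distinct, but both are fundamental domains for the diagonal action $(x,y)\mapsto(x+kL,y+kL)$ on the strip $\{|x-y|\le\tau-t\}$, and $f(x,t)\phi(y,\tau)$ is invariant under this action since both factors are $L$-periodic, so the two integrals agree. Second, your diagnosis of where $T_0<L/4$ enters is not quite accurate: the backward d'Alembert solution on $\mathbb{R}$ from an $L$-periodic smooth source is automatically $L$-periodic for \emph{any} $T_0$, so your argument in fact proves the lemma without that restriction. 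In the paper's approach the restriction is genuinely needed to keep the support of $\zeta_\varepsilon$ inside a single period without wrap-around. (Your argument yields the formula a.e.; pointwise equality follows once one notes that in the intended application $w$ is continuous.)
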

\begin{proof}
First of all, it is important to mention that standard methods of
partial differential equations enable us to deduce that the concept
of weak solutions can be rewritten in order to use $(\ref{gl.81})$.
Thus, let $(x,t)\in \mathbb{R}\times (0,T_0]$ be fixed. Define an
$L$-periodic smooth real function
$\chi^{\ast}:\mathbb{R}\rightarrow\mathbb{R}$ as
\begin{eqnarray*}\label{gl.83}\chi^{\ast}(y):=\displaystyle\left\{\begin{array}{l}
                                                        0,\ y\in \left[0,\frac{2t}{5}\right]. \\
                                                        0\leq \chi^{\ast}(y)\leq 1,\ y\in [\frac{2t}{5},\frac{4t}{5}].  \\
                                                        1,\ y\in [\frac{4t}{5},t]. \\
                                                        \chi^{\ast}(2t-y),\ y\in [t,2t]. \\
                                                        0,\ \ y\in [2t,L].
                                                      \end{array}
\right.\end{eqnarray*} \indent In what follows, let us assume that
$\chi^{\ast}$ is a nondecreasing function in the interval
$\left[\frac{2t}{5},\frac{4t}{5}\right]$. Now, let $\varepsilon>0$
be sufficiently small and define the auxiliary $L$-periodic function
\begin{eqnarray*}\label{gl.85}\chi_{\varepsilon}(y):=\displaystyle\left\{\begin{array}{l}
                                                        0,\  y\in [0,\frac{2t\varepsilon}{5}]. \\
                                                        \chi^{\ast}(\frac{y}{\varepsilon}),\ \ y\in [\frac{2t\varepsilon}{5},\frac{4t\varepsilon}{5}].  \\
                                                        1,\ y\in [\frac{4t\varepsilon}{5},t]. \\
                                                        \chi_{\varepsilon}(2t-y),\ y\in [t,2t]. \\
                                                        0,\ \ y\in [2t,L].
                                                      \end{array}
\right.\end{eqnarray*} We have that
\begin{eqnarray}\label{gl.86}\chi_{\varepsilon}(y)\stackrel{\varepsilon\rightarrow
0^{+}}{\longrightarrow}\displaystyle\left\{\begin{array}{c}
                                                                                                                    1,\ y\in (0,2t) \\
                                                                                                                    0,\
                                                                                                                    y\in
                                                                                                                    (2t,L)
                                                                                                                  \end{array}
\right.\end{eqnarray} and
\begin{eqnarray*}\label{gl.87}\chi'_{\varepsilon}(y)=\displaystyle\left\{\begin{array}{l}
                                                        0,\ y\in [0,\frac{2t\varepsilon}{5}]. \\
                                                        \frac{1}{\varepsilon}(\chi^{\ast})'(\frac{y}{\varepsilon}),\ y\in [\frac{2t\varepsilon}{5},\frac{4t\varepsilon}{5}].  \\
                                                        0,\ y\in [\frac{4t\varepsilon}{5},2t-\frac{4t\varepsilon}{5}]. \\
                                                        -\chi'_{\varepsilon}(2t-y)=-\frac{1}{\varepsilon}(\chi^{\ast})'(\frac{2t-y}{\varepsilon}),\ y\in [2t-\frac{4t\varepsilon}{5},2t-\frac{2t\varepsilon}{5}].\\
                                                        0,\ \ y\in
                                                        [2t-\frac{2t\varepsilon}{5},2t].\\
                                                        0,\ \ y\in [2t,L].
                                                      \end{array}\right.\end{eqnarray*}

Letting
\begin{eqnarray*}\zeta_{\varepsilon}(y,\tau):=\chi_{\varepsilon}(y-\tau-x+t)\ \chi_{\varepsilon}(x+t-y-\tau),\
\ (y,\tau)\in \mathbb{R}\times [0,T_0].\end{eqnarray*}

We see that $\zeta_{\varepsilon}$ is a smooth and $L$-periodic
function. Therefore, we deduce from (\ref{gl.81}),
\begin{eqnarray}\label{gl.89}\ \ \ \displaystyle\iint_{[0,L]\times[0,T_0]}w[(\zeta_{\varepsilon})_{tt}-(\zeta_{\varepsilon})_{xx}]\
dy\ d\tau=\displaystyle\iint_{[0,L]\times
[0,T_0]}f\zeta_{\varepsilon}\ dy\ d\tau.\end{eqnarray}

Next, if $0<x-t<x+t<L$ and $t\in (0,T_0]$ we get
$$\{[\textrm{supp}\ (\zeta_{\varepsilon})]\cap [0,L]\times
[0,T_0]\}\subset\{(y,\tau);\ x-t+\tau<y<x+t-\tau,\ 0<\tau<t\},$$
since
$$t<\frac{L}{2}-t \Leftrightarrow
t<\frac{L}{4}\ \ \ \textrm{and}\ \ \ t\leq T_0<\frac{L}{4}.$$

%$$t<\frac{L}{2}-t\Leftrightarrow 2t<\frac{L}{2} \Leftrightarrow
%t<\frac{L}{4}\ \ \ \textrm{and}\ \ \ t<T_0<\frac{L}{4}.$$

%\begin{figure}[hbt!]\begin{center}
%\includegraphics[width=1.6 cm,height=1.0 cm]{Suporte1.jpg}
%\caption{\label{f.gl.1}\small ??}
%\end{center}
%\end{figure}

Using (\ref{gl.86}), one can see from Lebesgue Dominated
Convergence Theorem that
\begin{eqnarray}\label{gl.91}&&
\displaystyle\lim_{\varepsilon\rightarrow
0^{+}}\displaystyle\iint_{[0,L]\times [0,T_0]}f\zeta_{\varepsilon}\
dy\
d\tau=\displaystyle\int_0^{t}\displaystyle\int_{x-t+\tau}^{x+t-\tau}f(y,\tau)\
dy\ d\tau.\end{eqnarray} By considering other values of $(x,t)$, it
is possible to deduce a similar situation as in (\ref{gl.91}) since
function $\zeta_{\varepsilon}$ is periodic at the spatial variable.

On the other hand,
\begin{eqnarray*}\displaystyle\iint_{[0,L]\times[0,T_0]}w[(\zeta_{\varepsilon})_{tt}-(\zeta_{\varepsilon})_{xx}]\
dy\
d\tau=(I)_{\varepsilon}+(II)_{\varepsilon}+(III)_{\varepsilon},\end{eqnarray*}
where
\begin{eqnarray*}(I)_{\varepsilon}&:=&4\displaystyle\int_{t-\frac{3t\varepsilon}{5}}^{t-\frac{2t\varepsilon}{5}}\displaystyle\int_{x-t+\tau+\frac{2t\varepsilon}{5}}^{x+t-\tau-\frac{2t\varepsilon}{5}}\chi'_{\varepsilon}(y-\tau-x+t)\ \chi'_{\varepsilon}(x+t-y-\tau)\
w(y,\tau)\ dy\ d\tau\nonumber\\
\nonumber\\
&+&4\displaystyle\int_{t-\frac{4t\varepsilon}{5}}^{t-\frac{3t\varepsilon}{5}}\displaystyle\int_{x+t-\tau-\frac{4t\varepsilon}{5}}^{x-t+\tau+\frac{4t\varepsilon}{5}}\chi'_{\varepsilon}(y-\tau-x+t)\
\chi'_{\varepsilon}(x+t-y-\tau)\
w(y,\tau)\ dy\ d\tau \nonumber\\
\nonumber\\
&=&2w\displaystyle\left(x,t-\displaystyle\frac{4t\varepsilon}{5}\right)+{O}(\varepsilon),\nonumber
\end{eqnarray*}
\begin{eqnarray*}(II)_{\varepsilon}&:=&4\displaystyle\int_{0}^{\frac{t\varepsilon}{5}}\displaystyle\int_{x-t+\tau+\frac{2t\varepsilon}{5}}^{x-t-\tau+\frac{4t\varepsilon}{5}}\chi'_{\varepsilon}(y-\tau-x+t)\ \chi'_{\varepsilon}(x+t-y-\tau)\ w(y,\tau)\ dy\
d\tau\nonumber\\
\nonumber\\
&=&{O}(\varepsilon)-2\displaystyle\int_{\frac{2t}{5}}^{\frac{4t}{5}}(\chi^{\ast})'(\kappa)\
\chi^{\ast}\displaystyle\left(\kappa\right)\
w\displaystyle\left(x-t+{\varepsilon \kappa},0\right)\
d\kappa\nonumber
\end{eqnarray*} and
\begin{eqnarray*}(III)_{\varepsilon}&=&4\displaystyle\int_{0}^{\frac{t\varepsilon}{5}}\displaystyle\int_{x+t+\tau-\frac{4t\varepsilon}{5}}^{x+t-\tau-\frac{2t\varepsilon}{5}}\chi'_{\varepsilon}(y-\tau-x+t)\ \chi'_{\varepsilon}(x+t-y-\tau)\ w(y,\tau)\ dy\
d\tau\nonumber\\
\nonumber\\
&=&{O}(\varepsilon)-2\displaystyle\int_{\frac{2t}{5}}^{\frac{4t}{5}}(\chi^{\ast})'(\kappa)\
\chi^{\ast}\displaystyle\left(\kappa\right)\
w\displaystyle\left(x+t-{\varepsilon \kappa},0\right)\
d\kappa.\nonumber
\end{eqnarray*}

Since $w(\cdot,0)=0$ we deduce for $\varepsilon\rightarrow 0^{+}$
that
\begin{eqnarray}\label{gl.96}\displaystyle\lim_{\varepsilon\rightarrow
0^{+}}\displaystyle\iint_{[0,L]\times[0,T_0]}w[(\zeta_{\varepsilon})_{tt}-(\zeta_{\varepsilon})_{xx}]\
dy\ d\tau=2w(x,t).\end{eqnarray} The proof of the proposition is completed by combining
(\ref{gl.89}), (\ref{gl.91}) and (\ref{gl.96}). % (\ref{gl.91}) e

%\begin{figure}[!h]\begin{center}
%\includegraphics[width=10cm,height=6.5cm]{Suporte2.jpg}
%\caption{\label{f.gl.2}\small ??}
%\end{center}
%\end{figure}
\end{proof}

Next, we continue with the proof of the Proposition $\ref{t.bc.gl.1}$ and we are going to follow the arguments in \cite{cazenave}. Indeed, let us
consider $$f:=-(u-v)+[u\log(|u|^p)-v\log(|v|^p)]\in
L^2(0,T;L^2_{per}([0,L])).$$ Assume that $T_0\leq T$, where
$0<T_0<\displaystyle\frac{L}{4}$. Since $w$ satisfies (\ref{gl.80})
with $w(\cdot,0)=0$ and $w_t(\cdot,0)=0$, we can apply the
characterization of the functionals in the dual space of
$H_{per}^1([0,L])$ in order to deduce that $w$ satisfies
(\ref{gl.81}). This last fact enables us to apply Lemma
\ref{p.bc.gl.01} to obtain
\begin{eqnarray}\label{gl.97} w(x,t)&=&-\displaystyle\frac{1}{2}\displaystyle\int_0^t\displaystyle\int_{x-t+\tau}^{x+t-\tau}[u(y,\tau)-v(y,\tau)]\ dy\ d\tau\nonumber\\
\\
&+&\displaystyle\frac{p}{2}\displaystyle\int_0^t\displaystyle\int_{x-t+\tau}^{x+t-\tau}[u(y,\tau)\log(|u(y,\tau)|)-v(y,\tau)\log(|v(y,\tau)|)]\
dy\ d\tau,\nonumber
\end{eqnarray} for all $(x,t)\in\mathbb{R}\times [0,T_0]$.\\
\indent If $(y,\tau)\in\mathbb{R}\times [0,T_0]$ and $|u(y,\tau)|\leq
|v(y,\tau)|$, we can use Proposition \ref{p.bc.gl.02} to get
\begin{eqnarray}\label{gl.99}\ \ \ \  \ \ |u(y,\tau)\log(|u(y,\tau)|)-v(y,\tau)\log(|v(y,\tau)|)|\leq [1+|\log(|v(y,\tau)|)|]\ |u(y,\tau)-v(y,\tau)|.
\end{eqnarray}

First, let us suppose $|v(y,\tau)|\leq 1$. So,
$|v(y,\tau)-u(y,\tau)|\leq |v(y,\tau)|+|u(y,\tau)|\leq 2
|v(y,\tau)|$ and
\begin{eqnarray}\label{gl.100}|\log(|v(y,\tau)|)|\leq |\log(|w(y,\tau)|)|+\log
2.\end{eqnarray} Now, if $|v(y,\tau)|\geq 1$ then
\begin{eqnarray}\label{gl.101}|\log(|v(y,\tau)|)|\leq
1+|v(y,\tau)|.\end{eqnarray}
\indent Using (\ref{gl.99}), (\ref{gl.100})
and (\ref{gl.101}), we have
\begin{eqnarray}\label{gl.102}\begin{array}{l}|u(y,\tau)\log(|u(y,\tau)|)-v(y,\tau)\log(|v(y,\tau)|)|\\
\\
\leq [2+\log 2+|v(y,\tau)|+|\log(|w(y,\tau)|)|]\
|w(y,\tau)|.\end{array}
\end{eqnarray}
\indent On the other hand, if we assume
$|v(y,\tau)|\leq |u(y,\tau)|,$ then
\begin{eqnarray}\label{gl.103}\begin{array}{l}|u(y,\tau)\log(|u(y,\tau)|)-v(y,\tau)\log(|v(y,\tau)|)|\\
\\
\leq [2+\log 2+|u(y,\tau)|+|\log(|w(y,\tau)|)|]\
|w(y,\tau)|.\end{array}
\end{eqnarray} Inequalities
(\ref{gl.102}) and (\ref{gl.103}) can be applied to establish
\begin{eqnarray}\label{gl.104}\begin{array}{l}|u(y,\tau)\log(|u(y,\tau)|)-v(y,\tau)\log(|v(y,\tau)|)|\\
\\
\leq [4+\log
4+|u(y,\tau)|+|v(y,\tau)|+2|\log(|w(y,\tau)|)|]\ |w(y,\tau)|\\
\\
\leq C_{13}\ [1+|\log(|w(y,\tau)|)|]\ |w(y,\tau)|,\end{array}
\end{eqnarray} where $C_{13}>0$ is a constant. The existence of $C_{13}>0$ is obtained since we have $$|u(y,\tau)|\leq \|u\|_{L^{\infty}([0,L]\times [0,T])}\ \ \textrm{and}
\ \ |v(y,\tau)|\leq \|v\|_{L^{\infty}([0,L]\times [0,T])}.$$

From (\ref{gl.97}) and (\ref{gl.104}) we have
$(x,t)\in \mathbb{R}\times [0,T_0]$,
\begin{eqnarray}\label{gl.104.1}|w(x,t)|&\leq&
\displaystyle\frac{1}{2}\displaystyle\int_0^t\displaystyle\int_{x-t+\tau}^{x+t-\tau}|u(y,\tau)-v(y,\tau)|\
dy\ d\tau\nonumber\\
\nonumber\\
&+&\frac{pC_{13}}{2}\displaystyle\int_0^t\displaystyle\int_{x-t+\tau}^{x+t-\tau}[1+|\log(|w(y,\tau)|)|]\
|w(y,\tau)|\ dy\ d\tau\\
\nonumber\\
&\leq&(1+pC_{13})T_0\displaystyle\int_0^t\|w(\cdot,\tau)\|_{L_{per}^{\infty}}\
d\tau+pC_{13}T_0\displaystyle\int_0^t\|w(\cdot,\tau)\log(|w(\cdot,\tau)|)\|_{L_{per}^{\infty}}\
d\tau.\nonumber
\end{eqnarray}

Let us define the constant $C_{14}:=\frac{(1+pC_{13})L}{4}$.
Inequality (\ref{gl.104.1}) gives us
\begin{eqnarray}\label{gl.105}\ \ \ \ \|w(\cdot,t)\|_{L_{per}^{\infty}}\leq C_{14}\displaystyle\int_0^t\|w(\cdot,\tau)\|_{L_{per}^{\infty}}\ d\tau+\displaystyle
C_{14}\displaystyle\int_0^t\|w(\cdot,\tau)\log(|w(\cdot,\tau)|)\|_{L_{per}^{\infty}}\
d\tau,
\end{eqnarray} for all $t\in [0,T_0]$. Hence, using
(\ref{gl.105}) and the fact that $w$ is bounded on $[0,L]\times
[0,T]$, there exists $T_1\in (0,T_0)$ such that
$\|w(\cdot,t)\|_{L_{per}^{\infty}}<\displaystyle\frac{1}{e}\
\textrm{for\ all\ } t\in [0,T_1].$

Next, define the function $\tilde{F}(\beta)=|\beta|\log\ (|\beta|)$\
for all $\beta\in\mathbb{C}.$ Since $|\beta|<-\tilde{F}(\beta)\
\textrm{for\ all\ } |\beta|\in \left(0,\frac{1}{e}\right),$ one has
\begin{eqnarray*}\displaystyle\int_0^t\|w(\cdot,\tau)\|_{L^{\infty}(\mathbb{R})}\
d\tau \leq
-\displaystyle\int_0^t\|w(\cdot,\tau)\|_{L_{per}^{\infty}}\log(\|w(\cdot,\tau)\|_{L_{per}^{\infty}})\
d\tau.\end{eqnarray*} Thus, from (\ref{gl.105}), we see that
\begin{eqnarray*}\|w(\cdot,t)\|_{L_{per}^{\infty}}
\leq-2C_{14}\displaystyle\int_0^t\|w(\cdot,\tau)\|_{L_{per}^{\infty}}\log(\|w(\cdot,\tau)\|_{L_{per}^{\infty}})\
d\tau\ \textrm{ for\ all\ } t\in [0,T_1].
\end{eqnarray*} Using Proposition \ref{t.bc.gl.5}, we
finally conclude $\|w(\cdot,t)\|_{L_{per}^{\infty}}=0\ \
\textrm{a.e.}\ \ t\in [0,T_1].$

Let us prove the result over $[0,T]$. Consider the auxiliary
functions
$$\widetilde{w}(\cdot,\cdot):={w}(\cdot,\cdot+T_1),\ \
\widetilde{u}(\cdot,\cdot):={u}(\cdot,\cdot+T_1)\ \ \textrm{and}\ \
\widetilde{v}(\cdot,\cdot):={v}(\cdot,\cdot+T_1).$$

The function $\widetilde{w}$ is a weak solution to the Cauchy
problem
\begin{eqnarray*}\displaystyle\left\{\begin{array}{l}
                                       \widetilde{w}_{tt}-\widetilde{w}_{xx}=-(\widetilde{u}-\widetilde{v})+p[\log(|\widetilde{u}|)\widetilde{u}-\log(|\widetilde{v}|)\widetilde{v}],\ (x,t)\in\mathbb{R}\times (0,T_1).\\
                                       \widetilde{w}(x,0)=0,\ x\in\mathbb{R}. \\
                                       \widetilde{w}_{t}(x,0)=0,\ x\in\mathbb{R}.
                                     \end{array}
\right.\end{eqnarray*} So, \begin{eqnarray*} \widetilde{w}(x,t)&=&-\displaystyle\frac{1}{2}\displaystyle\int_0^t\displaystyle\int_{x-t+\tau}^{x+t-\tau}[\widetilde{u}(y,\tau)-\widetilde{v}(y,\tau)]\ dy\ d\tau\nonumber\\
\nonumber\\
&+&\displaystyle\frac{p}{2}\displaystyle\int_0^t\displaystyle\int_{x-t+\tau}^{x+t-\tau}[\widetilde{u}(y,\tau)\log(|\widetilde{u}(y,\tau)|)-\widetilde{v}(y,\tau)\log(|\widetilde{v}(y,\tau)|)]\
dy\ d\tau,\nonumber
\end{eqnarray*} for all $(x,t)\in\mathbb{R}\times [0,T_1]$. Similar argument above gives us
$\|w(\cdot,t)\|_{L_{per}^{\infty}}=0\ \ \textrm{a.e.}\ \ t\in
[T_1,2T_1].$

Therefore, we deduce the uniqueness for all values of $T>0$ using an
interaction argument. We have completed the proof of Proposition
$\ref{t.bc.gl.1}$.
\begin{flushright}
$\square$
\end{flushright}

\indent Now we prove that our weak solution obtained in Proposition $\ref{t.bc.gl.1}$ satisfies equalities in $(\ref{cons-quant-1})$. Thus, Theorem $\ref{t.bc.gl.1.1}$ can be proven by combining both results.

\begin{proposition}\label{prop-cons-quant}
The weak solution obtained in Proposition $\ref{t.bc.gl.1}$  satisfies the following conserved quantities:
$$\mathcal{E}(u(\cdot,t),u'(\cdot,t))=\mathcal{E}(u_{0},u_{1})\
\ \mbox{and}\ \
\mathcal{F}(u(\cdot,t),u'(\cdot,t))=\mathcal{F}(u_{0},u_{1})\ \ \ \ \ \mbox{a.e.}\ t\in [0,T],$$
where $\mathcal{E}$ and $\mathcal{F}$ are defined as in $(\ref{gl.108.1})$ and $(\ref{gl.109.1})$.
\end{proposition}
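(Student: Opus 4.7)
Since $u$ is obtained as a weak limit of the Galerkin approximants $u_m$, which themselves satisfy both conservation identities exactly, the two half-conservation inequalities~(\ref{eqcons}) are already at our disposal. The task is to upgrade them to equalities. I will do this by the ``weak $=$ strong'' argument of Lions: exploit the symmetries of~(\ref{gl.1}) together with the uniqueness furnished by Proposition~\ref{t.bc.gl.1}.

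For $\mathcal{E}$, fix $t_0\in(0,T]$ and set $v(x,s):=u(x,t_0-s)$ for $s\in[0,t_0]$. Because~(\ref{gl.1}) is autonomous, second order in time and contains no first-order-in-time term, $v$ is again a weak solution in the sense of Theorem~\ref{t.bc.gl.1.1} with initial data $(v(\cdot,0),v_s(\cdot,0))=(u(\cdot,t_0),-u_t(\cdot,t_0))\in H^1_{per}\times L^2_{per}$. Uniqueness identifies $v$ with the unique weak solution emanating from this data, so the half-conservation~(\ref{eqcons}) applies to it as well: evaluating at $s=t_0$ yields
\[
\mathcal{E}(u_0,-u_1)=\mathcal{E}(v(\cdot,t_0),v_s(\cdot,t_0))\leq\mathcal{E}(v(\cdot,0),v_s(\cdot,0))=\mathcal{E}(u(\cdot,t_0),-u_t(\cdot,t_0)).
\]
As the expression~(\ref{gl.108.1}) depends on its second slot only through $|u_t|^2$, this reads $\mathcal{E}(u_0,u_1)\leq\mathcal{E}(u(\cdot,t_0),u_t(\cdot,t_0))$, the reverse of~(\ref{eqcons}), and combined with~(\ref{eqcons}) produces the desired equality.

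The conservation of $\mathcal{F}$ is the main obstacle, because time reversal alone does \emph{not} work here: $\mathcal{F}$ is odd in its second slot, so the flip of $u_t$ under $s\mapsto t_0-s$ cancels the direction change produced by the reversal and the argument above recovers only the original half-conservation. Instead I will combine uniqueness with complex conjugation. Since~(\ref{gl.1}) has real coefficients and $\log(|u|^p)=\log(|\bar u|^p)$, the function $w:=\overline{u}$ is itself a weak solution, with initial data $(\bar u_0,\bar u_1)$, and by Proposition~\ref{t.bc.gl.1} it is the unique one. A direct calculation gives
\[
\mathcal{F}(w,w_t)=\textrm{Im}\int_0^L u\,\overline{u_t}\,dx=-\,\textrm{Im}\int_0^L \overline{u}\,u_t\,dx=-\mathcal{F}(u,u_t).
\]
Applying~(\ref{eqcons}) to $w$ therefore yields $-\mathcal{F}(u(\cdot,t),u_t(\cdot,t))\leq-\mathcal{F}(u_0,u_1)$, i.e.\ $\mathcal{F}(u(\cdot,t),u_t(\cdot,t))\geq\mathcal{F}(u_0,u_1)$. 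Combining this with~(\ref{eqcons}) once more, the equality $\mathcal{F}(u(\cdot,t),u_t(\cdot,t))=\mathcal{F}(u_0,u_1)$ holds for a.e.\ $t\in[0,T]$, which together with the equality for $\mathcal{E}$ established above completes the proof.
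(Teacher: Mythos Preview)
Your argument is correct and takes a genuinely different route from the paper.

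For $\mathcal{E}$, the paper proceeds by an explicit mollification-in-time computation in the style of Lions: it tests the equation against $\Theta_{n,k}=\vartheta_n[(\vartheta_n u')\ast\rho_k\ast\rho_k]$, passes $k\to\infty$ and then $n\to\infty$ to obtain the identity $\mathcal{E}(u(\cdot,t),u'(\cdot,t))=\mathcal{E}(u(\cdot,s),u'(\cdot,s))$ for $0<s<t<T$, and finally sends $s\to 0^+$ using weak continuity and lower semicontinuity of the $H^1_{per}$ and $L^2_{per}$ norms. Your time-reversal-plus-uniqueness argument bypasses all of this machinery and is considerably shorter; it exploits exactly the structural feature (time-reversibility of the equation together with the uniqueness already proved) that makes the energy identity expected in the first place. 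One small caveat: the half-inequality~(\ref{eqcons}) for the reversed solution $v$ holds only a.e.\ in $s$, so ``evaluating at $s=t_0$'' still needs a weak-continuity/lower-semicontinuity step of the kind the paper carries out in~(\ref{gl.153})--(\ref{gl.157}); this is routine but should be mentioned.

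For $\mathcal{F}$, your conjugation trick is valid, but in fact no upgrade is needed. Because $u_m\to u$ strongly in $L^2(0,T;L^2_{per})$ (by Aubin--Lions, see~(\ref{gl.55})) while $u_m'\rightharpoonup u'$ weakly in $L^2(0,T;L^2_{per})$, the bilinear pairing $\mathrm{Im}\int_0^L\overline{u_m}\,u_m'\,dx$ passes to the limit directly after smearing against a test function in $t$, and one obtains the \emph{equality} in~(\ref{eqcons}) for $\mathcal{F}$ at once. This is precisely what the paper does in~(\ref{gl.171})--(\ref{gl.178}). Note that the half-inequality for $\mathcal{F}$ you take as a starting point is not a Fatou-type consequence of weak limits (indeed $\mathcal{F}$ is not convex and is linear in each slot); what weak limits actually give here is already the full conservation law. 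So while your symmetry argument is a nice parallel to the $\mathcal{E}$ case, the paper's route for $\mathcal{F}$ is the more direct one.
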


\begin{proof}
First of all, we prove that the conserved quantity
(\ref{gl.108.1}) holds. Indeed, the construction of $u_m$ and
(\ref{gl.23}) gives us
\begin{eqnarray}\label{gl.110}\mathcal{E}(u_m(t),u_m'(t))=\mathcal{E}(u_{0,m},u_{1,m}).\end{eqnarray}

Consider $\vartheta\in C^0([0,T])$, $\vartheta\geq 0$. Multiplying
the identity (\ref{gl.110}) by the function $\vartheta$ and
integrating the result over interval $[0,T]$,
\begin{eqnarray}\label{gl.111}\displaystyle\int_0^T\mathcal{E}(u_m(t),u_m'(t))\vartheta(t)\
dt=\displaystyle\int_0^T\mathcal{E}(u_{0,m},u_{1,m})\vartheta(t)\
dt.\end{eqnarray} Next, since
$u_{0,m}\stackrel{m\rightarrow\infty}{\longrightarrow} u_0$ in
$H^1_{per}([0,L])$ and
$u_{1,m}\stackrel{m\rightarrow\infty}{\longrightarrow} u_1$ in
$L^2_{per}([0,L])$, we get
\begin{eqnarray}\label{gl.114}\displaystyle\int_0^T\mathcal{E}(u_{0,m},u_{1,m})\vartheta(t)\ dt\stackrel{m\rightarrow\infty}{\longrightarrow}\displaystyle\int_0^T\mathcal{E}(u_0,u_1)\vartheta(t)\ dt.\end{eqnarray}
\indent In addition, using (\ref{gl.55}),
\begin{eqnarray}\label{gl.118}\displaystyle\int_0^T\vartheta(t)\displaystyle\int_0^L|u_m(x,t)|^2 dx\
dt\stackrel{m\rightarrow\infty}{\longrightarrow}\displaystyle\int_0^T\vartheta(t)\displaystyle\int_0^L
|u(x,t)|^2\ dx\ dt
\end{eqnarray} and
\begin{eqnarray}\label{gl.119} \ \ \ \ \ \ \
 \displaystyle\int_0^T\vartheta(t)\displaystyle\int_0^L|u_m(x,t)|^2\log(|u_m(x,t)|^p)
dx\
dt\stackrel{m\rightarrow\infty}{\longrightarrow}\displaystyle\int_0^T\vartheta(t)\displaystyle\int_0^L
|u(x,t)|^2\log(|u(x,t)|^p) dx\ dt.
\end{eqnarray}
\indent Next, the first limit in (\ref{gl.50}) enables us to deduce
$$\sqrt{\vartheta}u_m\rightharpoonup \sqrt{\vartheta}u\ \
\textrm{weakly\ in\ } L^2(0,T;H^1_{per}([0,L])).$$ So, Fatou's Lemma
applied to last weak convergence gives us
\begin{eqnarray}\label{gl.121}\displaystyle\int_0^T\vartheta(t)\|u(t)\|^2_{H^1_{per}}\
dt\leq \displaystyle\liminf_{m\rightarrow
\infty}\displaystyle\int_0^T\vartheta(t)\|u_m(t)\|^2_{H^1_{per}}\
dt.\end{eqnarray} \indent Similarly, using the second limit in
(\ref{gl.50}), we get
\begin{eqnarray}\label{gl.122}\displaystyle\int_0^T\vartheta(t)\|u'(t)\|^2_{L^2_{per}}\
dt\leq
\displaystyle\liminf_{m\rightarrow\infty}\displaystyle\int_0^T\vartheta(t)\|u_m'(t)\|^2_{L^2_{per}}\
dt.\end{eqnarray} From (\ref{gl.111}), (\ref{gl.114}),
(\ref{gl.118}), (\ref{gl.119}), (\ref{gl.121}) and (\ref{gl.122}),
we conclude the following inequality
\begin{eqnarray*}\displaystyle\int_0^T\mathcal{E}(u(\cdot,t),u_t(\cdot,t))\vartheta(t)\
dt\leq \displaystyle\int_0^T\mathcal{E}(u_0,u_1)\vartheta(t)\
dt.\end{eqnarray*} \indent Therefore, using the fact that
function $\vartheta$ is arbitrary, we obtain
\begin{eqnarray}\label{gl.124}\mathcal{E}(u(\cdot,t),u_t(\cdot,t))\leq
\mathcal{E}(u_0,u_1)\ \textrm{a.e.}\ t\in [0,T].\end{eqnarray}

On the other hand, consider $s$, $t$ satisfying $0<s<t<T$. Since $u$ is a weak solution, one has
$$u_{tt}(\cdot,\xi)-u_{xx}(\cdot,\xi)+u(\cdot,\xi)-u(\cdot,\xi)\log(|u(\cdot,\xi)|^p)=0\
\ \textrm{in}\ \ H^{-1}_{per}([0,L]),$$ a.e. $\xi\in [0,T]$.
Furthermore,
\begin{eqnarray}\label{gl.125}u_{tt}-u_{xx}+u-u\log(|u|^p)=0\ \ \textrm{in}\ \
L^2(0,T;H^{-1}_{per}([0,L])).\end{eqnarray}

Let us consider $n\in\mathbb{N}$, such that
$n>\max\left\{\frac{1}{s},\frac{1}{T-t}\right\}$. Define function
$\vartheta_n$ as
\begin{eqnarray*}\label{gl.126}\vartheta_n(\xi)=\displaystyle\left\{\begin{array}{l}
                                                    0,\ 0\leq\xi\leq s-\displaystyle\frac{1}{n}. \\
                                                    1+n(\xi-s),\ s-\displaystyle\frac{1}{n}\leq\xi\leq s. \\
                                                    1,\ s\leq\xi\leq t. \\
                                                    1-n(\xi-t),\ t\leq\xi\leq t+\displaystyle\frac{1}{n}. \\
                                                    0,\ t+\displaystyle\frac{1}{n}\leq\xi\leq T.
                                                  \end{array}
\right.\end{eqnarray*}

Next, let $\{\rho_k\}_{k\in\mathbb{N}}\subset
C^{\infty}_0(\mathbb{R})$ be the standard mollifier, where $\rho_k$
is an even function satisfying $\textrm{supp}\ (\rho_k)\
\subset\left[-\frac{1}{k},\frac{1}{k}\right]$ for all
$k\in\mathbb{N}$. Consider
$k>\max\left\{\frac{2n}{ns-1},\frac{2n}{Tn-tn-1}\right\}$ and define
function
$\Theta_{n,k}:=\vartheta_n[(\vartheta_nu')\ast\rho_k\ast\rho_k].$
Here, the symbol $\ast$ denotes the usual convolution in
time-variable. We have the following property
\begin{eqnarray*}\textrm{supp}\ (\Theta_{n,k})\subset\displaystyle\left[s-\displaystyle\frac{1}{n},t+\displaystyle\frac{1}{n}\right]+\displaystyle\left[-\displaystyle\frac{2}{k},\displaystyle\frac{2}{k}\right]\subset(0,T).
\end{eqnarray*} One has
$$(u\vartheta_n)\ast\rho_k\ast\rho_k'=(u\vartheta_n)'\ast\rho_k\ast\rho_k=(u'\vartheta_n)\ast\rho_k\ast\rho_k+(u\vartheta_n')\ast\rho_k\ast\rho_k$$
and
$$\Theta_{n,k}=\vartheta_n[(u\vartheta_n)\ast\rho_k\ast\rho_k'-(u\vartheta_n')\ast\rho_k\ast\rho_k]\in
L^2(0,T;H^1_{per}([0,L])).$$

Hence, from the identity given in (\ref{gl.125}), we get
\begin{eqnarray}\label{gl.131}\begin{array}{l}\displaystyle\int_0^T\langle u''(\cdot,\xi),\Theta_{n,k}(\xi)\rangle_{H^{-1}_{per},H^1_{per}}\ d\xi-\displaystyle\int_0^T\langle
u_{xx}(\cdot,\xi),\Theta_{n,k}(\xi)\rangle_{H^{-1}_{per},H^1_{per}}\
d\xi\\
\\
+\displaystyle\int_0^T\langle
u(\cdot,\xi),\Theta_{n,k}(\xi)\rangle_{L^{2}_{per},L^2_{per}}\
d\xi=\displaystyle\int_0^T\langle
u(\cdot,\xi)\log(|u(\cdot,\xi)|^p),\Theta_{n,k}(\xi)\rangle_{L^{2}_{per},L^2_{per}}\
d\xi.\end{array}
\end{eqnarray}

In what follows, we will omit temporal variables. A similar procedure as in \cite[Theorem 1.6]{lions} establishes a convenient expression to the first term in the left-hand side of $(\ref{gl.131})$ as
\begin{eqnarray}\label{gl.131.01}\displaystyle\int_0^T\langle u'',\Theta_{n,k}\rangle_{H^{-1}_{per},H^1_{per}}\
d\xi&=& \displaystyle\int_0^T\langle (\vartheta_n
u')'\ast{\rho_k},(\vartheta_nu')\ast\rho_k\rangle_{H^{-1}_{per},H^1_{per}}\
d\xi\nonumber\\
\\
&-&\displaystyle\int_0^T\langle (\vartheta_n'
u')\ast{\rho_k},(\vartheta_nu')\ast\rho_k\rangle_{H^{-1}_{per},H^1_{per}}\
d\xi.\nonumber
\end{eqnarray}

In addition, since
\begin{eqnarray}\label{gl.132}(\vartheta_n'u')\ast\rho_k\stackrel{k\rightarrow\infty}{\longrightarrow}\vartheta_n'u'\
\ \textrm{ in}\ \ L^2(0,T;L^2_{per}([0,L]))\end{eqnarray} and
\begin{eqnarray}\label{gl.133}(\vartheta_n u')\ast\rho_k\stackrel{k\rightarrow\infty}{\longrightarrow}\vartheta_n u'\
\ \textrm{ in}\ \ L^2(0,T;L^2_{per}([0,L])),\end{eqnarray} we
deduce, from (\ref{gl.132}) and (\ref{gl.133}),
\begin{eqnarray}\label{gl.135}&&\displaystyle\lim_{k\rightarrow\infty}\displaystyle\int_0^T\langle (\vartheta_n'
u')\ast{\rho_k},(\vartheta_nu')\ast\rho_k\rangle_{H^{-1}_{per},H^1_{per}}\
d\xi\nonumber\\
\nonumber\\
&&=\displaystyle\lim_{k\rightarrow\infty}\displaystyle\int_0^T\langle
(\vartheta_n'
u')\ast{\rho_k},(\vartheta_nu')\ast\rho_k\rangle_{L^{2}_{per},L^2_{per}}\
d\xi\\
\nonumber\\
&&=\displaystyle\int_0^T\langle \vartheta_n'
u',\vartheta_nu'\rangle_{L^{2}_{per},L^2_{per}}\
d\xi=\displaystyle\int_0^T\vartheta_n\vartheta_n'\|u'(\cdot,\xi)\|^2_{L^2_{per}}\
d\xi.\nonumber
\end{eqnarray}

Furthermore, since $\textrm{supp}\
[(\vartheta_nu')\ast\rho_k]\subset
\left[s-\frac{1}{n},t+\frac{1}{n}\right]+\left[-\frac{1}{k},\frac{1}{k}\right]\subset
(0,T),$ we obtain
 \begin{eqnarray}\label{gl.136}\ \ \ \ \ \
\textrm{Re}\displaystyle\left(\displaystyle\int_0^T\displaystyle\langle(\vartheta_nu')'\ast\rho_k,(\vartheta_nu')\ast\rho_k\rangle_{H^{-1}_{per},H^1_{per}}
d\xi\right)=\displaystyle\frac{1}{2}\displaystyle\int_0^T\displaystyle\frac{\partial}{\partial
\xi}\displaystyle\left[\|(\vartheta_nu')\ast\rho_k\|_{L^2_{per}}^2\right]
d\xi=0.\end{eqnarray}

From (\ref{gl.131.01}), (\ref{gl.135}) and (\ref{gl.136}), we see that
\begin{eqnarray}\label{gl.138}&&\displaystyle\lim_{k\rightarrow\infty}\textrm{Re}\displaystyle\left(\displaystyle\int_0^T\langle
u'',\Theta_{n,k}\rangle_{H^{-1}_{per},H^1_{per}}
d\xi\right)=-\displaystyle\int_0^T\vartheta_n\vartheta_n'\|u'(\cdot,\xi)\|^2_{L^2_{per}}
d\xi.
\end{eqnarray} Moreover,
\begin{eqnarray}\label{gl.139}\begin{array}{l}\ \ \ \ -\displaystyle\int_0^T\langle
u_{xx},\Theta_{n,k}\rangle_{H^{-1}_{per},H^1_{per}}\
d\xi+\displaystyle\int_0^T\langle
u,\Theta_{n,k}\rangle_{L^{2}_{per},L^2_{per}}\
d\xi\\
\\
\ \ \ \ =\displaystyle\int_0^T\langle
u,\vartheta_n[(\vartheta_nu')\ast\rho_k\ast\rho_k]\rangle_{H^{1}_{per},H^1_{per}}\
d\xi\\
\\
\ \ \ \ =\displaystyle\int_0^T\langle
(\vartheta_nu)\ast{\rho_k},(\vartheta_nu)\ast\rho_k'\rangle_{H^{1}_{per},H^1_{per}}\
d\xi-\displaystyle\int_0^T\langle
(\vartheta_nu)\ast{\rho_k},(\vartheta_n'u)\ast\rho_k\rangle_{H^{1}_{per},H^1_{per}}\
d\xi.\end{array}
\end{eqnarray} \indent On the left-hand-side of $(\ref{gl.139})$, it is possible to use a similar argument as in (\ref{gl.138}) to obtain
\begin{eqnarray}\label{gl.145}\begin{array}{l}\displaystyle\lim_{k\rightarrow\infty}\textrm{Re}\displaystyle\left(-\displaystyle\int_0^T\langle
u_{xx},\Theta_{n,k}\rangle_{H^{-1}_{per},H^1_{per}}\
d\xi+\displaystyle\int_0^T\langle
u,\Theta_{n,k}\rangle_{L^{2}_{per},L^2_{per}}\
d\xi\right)\\
\\
=-\displaystyle\int_0^T\vartheta_n\vartheta_n'\|u(\cdot,\xi)\|^2_{H^1_{per}}\
d\xi.\end{array}
\end{eqnarray}
Next,
\begin{eqnarray}\label{gl.147}\displaystyle\lim_{k\rightarrow\infty}\displaystyle\int_0^T\langle u\log(|u|^p),\Theta_{n,k}\rangle_{L^2_{per},L^2_{per}}\
d\xi=\displaystyle\int_0^T\vartheta_n^2\langle
u\log(|u|^p),u'\rangle_{L^2_{per},L^2_{per}}\ d\xi.
\end{eqnarray}

Hence, collecting the results in (\ref{gl.131}), (\ref{gl.138}), (\ref{gl.145}) and
(\ref{gl.147}), we have for $k$ large enough,
\begin{eqnarray}\label{gl.148}\begin{array}{l}\ \ \ \ \ \ -\displaystyle\int_0^T\vartheta_n\vartheta_n'\displaystyle\left(\|u'(\cdot,\xi)\|_{L^2_{per}}^2+\|u(\cdot,\xi)\|_{H^1_{per}}^2\right) d\xi - \displaystyle\frac{p}{2}\displaystyle\int_0^T\vartheta_n(\xi)\vartheta_n'(\xi)\displaystyle\left[\displaystyle\int_0^L|u(x,\xi)|^2\
dx\right]d\xi\\
\\
\ \ \ \ \ \
=-\displaystyle\frac{p}{2}\displaystyle\int_0^T\vartheta_n(\xi)\vartheta'_n(\xi)\displaystyle\left[\displaystyle\int_0^L|u(x,\xi)|^2\log(|u(x,\xi)|^2)\
dx \right]d\xi.\end{array}
\end{eqnarray}
Now for $\Lambda\in L^1([0,T])$ one has
\begin{eqnarray}\label{gl.148.1}-\displaystyle\int_0^T\vartheta_n\vartheta_n'\Lambda\
d\xi\stackrel{n\rightarrow\infty}{\longrightarrow}\displaystyle\frac{1}{2}\Lambda(t)-\displaystyle\frac{1}{2}\Lambda(s)\end{eqnarray}
(see \cite[pg. 25]{lions}). Convergence in (\ref{gl.148.1}) combined
with (\ref{gl.148}) enables us to conclude
\begin{eqnarray}\label{gl.152}\mathcal{E}(u(\cdot,t),u'(\cdot,t))=\mathcal{E}(u(\cdot,s),u'(\cdot,s)),\
\ 0<s<t<T.\end{eqnarray}

%%%%%%%%%%

Since $u\in C^0([0,T];L^2_{per}([0,L]))$, we obtain
\begin{eqnarray}\label{gl.153}\displaystyle\int_0^L|u(x,s)|^2\ dx\stackrel{s\rightarrow 0^{+}}{\longrightarrow}\displaystyle\int_0^L|u(x,0)|^2\
dx=\displaystyle\int_0^L|u_0(x)|^2\ dx.\end{eqnarray} In addition,
\begin{eqnarray}\label{gl.155} \ \ \displaystyle\int_0^L|u(x,s)|^2\log(|u(x,s)|^p) dx\stackrel{s\rightarrow 0^{+}}{\longrightarrow}
\displaystyle\int_0^L|u_0(x)|^2\log(|u_0(x)|^p) dx.\end{eqnarray}

On the other hand, the fact that $u\in C_s(0,T;H^1_{per}([0,L]))$
and Fatou's Lemma imply that
\begin{eqnarray}\label{gl.156}\|u_0\|^2_{H^1_{per}}=\|u(\cdot,0)\|^2_{H^1_{per}}\leq\displaystyle\liminf_{s\rightarrow
0^{+}}\|u(\cdot,s)\|^2_{H^1_{per}}.
\end{eqnarray} Similarly, since $u'\in C_s(0,T;L^2_{per}([0,L]))$, we get
\begin{eqnarray}\label{gl.157}\|u_1\|^2_{L^2_{per}}=\|u'(\cdot,0)\|^2_{L^2_{per}}\leq\displaystyle\liminf_{s\rightarrow
0^{+}}\|u'(\cdot,s)\|^2_{L^2_{per}}.
\end{eqnarray}

From relations (\ref{gl.152}), (\ref{gl.153}), (\ref{gl.155}),
(\ref{gl.156}) and (\ref{gl.157}), we obtain that
\begin{eqnarray}\label{gl.158}\ \ \ \ \mathcal{E}(u_0,u_1)\leq \displaystyle\liminf_{s\rightarrow
0^{+}}\mathcal{E}(u(\cdot,s),u'(\cdot,s))=\mathcal{E}(u(\cdot,t),u'(\cdot,t))\
\ \textrm{a.e.}\ \ t\in [0,T].
\end{eqnarray} Therefore, from
(\ref{gl.124}) and (\ref{gl.158})
$\mathcal{E}(u(\cdot,t),u'(\cdot,t))=\mathcal{E}(u_0,u_1)\
 \textrm{a.e.}\ t\in [0,T].$ So, $\mathcal{E}$ is a conserved quantity.\\

\indent The next step is to prove the conserved quantity in
(\ref{gl.109.1}). Consider a function $\tilde{\vartheta}\in
\mathcal{D}([0,T])$. We recall that function $u_m$ satisfies the
identity
\begin{eqnarray*}u_m''(t)-\Delta u_{m}(t)+u_{m}(t)-u_m(t)\log(|u_m(t)|^p)=0\ \ \textrm{in}\ \
V_m\ \textrm{a.e.}\ t\in [0,T].\end{eqnarray*} Since
$\{\omega_\nu\}_{\nu\in\mathbb{N}}\subset H^3_{per}([0,L])$, we have
\begin{eqnarray}\label{gl.170}(u_m)_{tt}-(u_m)_{xx}+u_m-u_m
\log(|u_m|^p)=0\end{eqnarray} a.e. $(x,t)\in [0,L]\times [0,T]$. We
claim that
\begin{eqnarray}\label{gl.171}\textrm{Im}\
\displaystyle\int_0^L\overline{u_m(x,t)}u_m'(x,t)\ dx=\textrm{Im}\
\displaystyle\int_0^L\overline{u_{0,m}(x)}u_{1,m}(x)\ dx\
\textrm{a.e.}\ t\in [0,T].
\end{eqnarray} Indeed, from (\ref{gl.170}) we have that
(\ref{gl.171}) occurs since
\begin{eqnarray}\label{gl.172}
\ \displaystyle\int_0^L [\textrm{Re}\ {u_m(x,t)} \textrm{Im}\
u_m''(x,t)-\textrm{Im}\ {u_m(x,t)}\textrm{Re}\ u_m''(x,t)] dx=0\
\textrm{a.e.}\ t\in [0,T],\end{eqnarray} and identities
(\ref{gl.171}) and (\ref{gl.172}) are equivalent.

Multiplying identity (\ref{gl.171}) by $\tilde{\vartheta}$ and
integrating the result over the interval $[0,T]$, we deduce for $m$
large,
\begin{eqnarray}\label{gl.178}
\displaystyle\int_0^T\tilde{\vartheta}(t)\displaystyle\left[\textrm{Im}\
\displaystyle\int_0^L\overline{u(x,t)}u'(x,t)\ dx\right]
dt=\displaystyle\int_0^T\tilde{\vartheta}(t)\displaystyle\left[\textrm{Im}\
\displaystyle\int_0^L\overline{u_0(x)}u_1(x)\ dx\right] dt.\
\end{eqnarray}

Finally, using (\ref{gl.178}) and since function
$\tilde{\vartheta}$ is arbitrary, we have that
\begin{eqnarray*}\textrm{Im}\
\displaystyle\int_0^L\overline{u(x,t)}u'(x,t)\ dx=\textrm{Im}\
\displaystyle\int_0^L\overline{u_0(x)} u_1(x)\ dx\
 \textrm{a.e.}\ t\in [0,T].
\end{eqnarray*} Therefore, $\mathcal{F}$ is also a conserved quantity.\\
\end{proof}

\begin{remark}\label{o.t.bc.gl.5}\textit{ A similar
argument as determined in the proof of Proposition $\ref{prop-cons-quant}$ to prove that $\mathcal{E}$ is a conserved quantity enables us to deduce
\begin{eqnarray*}\|u(\cdot,t_0)\|^2_{H^1_{per}}\leq\displaystyle\liminf_{s\rightarrow
t_0}\|u(\cdot,s)\|^2_{H^1_{per}},\ \ \
\|u_t(\cdot,t_0)\|^2_{L^2_{per}}\leq\displaystyle\liminf_{s\rightarrow
t_0}\|u_t(\cdot,s)\|^2_{L^2_{per}}\end{eqnarray*} and
\begin{eqnarray*}\displaystyle\lim_{s\rightarrow
t_0}\|u(\cdot,s)\|^2_{L^2_{per}}=\|u(\cdot,t_0)\|^2_{L^2_{per}}.
\end{eqnarray*} Moreover, \begin{eqnarray*}\displaystyle\lim_{s\rightarrow
t_0}\displaystyle\int_0^L\log(|u(x,s)|^p)|u(x,s)|^2\
dx=\int_0^L\log(|u(x,t_0)|^p)|u(x,t_0)|^2\ dx,
\end{eqnarray*} for all $t_0\in (0,T)$. Since $\mathcal{E}$ is a
conserved quantity, the arguments contained in Cazenave \cite[Chapter II,
Lemma 2.4.4]{cazenave} give us that
\begin{eqnarray*}u\in C^0([0,T];H^1_{per}([0,L]))\ \ \ \textrm{and}\ \ \ u_t\in C^0([0,T];L^2_{per}([0,L])).\end{eqnarray*}
}\end{remark}\

\textbf{Existence and Uniqueness in $H_{per,e}^1([0,L])$.}\\

Similar arguments can be used to show existence and uniqueness of
weak solutions $u$ related to the Cauchy problem $(\ref{gl.1})$ in a convenient subspace constituted by even periodic functions. Suppose
$p$ and $L$ as above. Consider $T>0$, $u_0\in H^1_{per,e}([0,L])$
and $u_1\in L^2_{per,e}([0,L])$. We have the following result:
\begin{theorem}\label{t.bc.gl.6} There exists an unique weak solution $u:\mathbb{R}\times [0,T]\rightarrow
\mathbb{C}$ for the Cauchy problem (\ref{gl.1}). In addition, solution $u$ must satisfy
$u\in L^{\infty}(0,T;H^1_{per,e}([0,L]))$, $u_t\in
L^{\infty}(0,T;L^2_{per,e}([0,L]))$ and $u_{tt}\in
L^{\infty}(0,T;(H^{1}_{per,e}([0,L]))').$
\end{theorem}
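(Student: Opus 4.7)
The cleanest strategy is to reduce Theorem \ref{t.bc.gl.6} to Theorem \ref{t.bc.gl.1.1} by exploiting the reflection symmetry $x \mapsto -x$, rather than setting up a fresh Galerkin scheme. The plan is to invoke the existence and uniqueness already obtained in Proposition \ref{t.bc.gl.1} (applied in the ambient space $H^1_{per}([0,L])\times L^2_{per}([0,L])$) and then show that the resulting solution automatically belongs to the even subspace.

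\textbf{Step 1. Existence in the ambient space.} Since $H^1_{per,e}([0,L])\subset H^1_{per}([0,L])$ and $L^2_{per,e}([0,L])\subset L^2_{per}([0,L])$ as closed subspaces, the initial data $(u_0,u_1)$ are admissible for Proposition \ref{t.bc.gl.1}. This furnishes a unique global weak solution $u$ with $u\in L^{\infty}(0,T;H^1_{per}([0,L]))$, $u_t\in L^{\infty}(0,T;L^2_{per}([0,L]))$, and $u_{tt}\in L^{\infty}(0,T;H^{-1}_{per}([0,L]))$, satisfying the two conservation laws in $(\ref{cons-quant-1})$.

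\textbf{Step 2. Preservation of evenness via uniqueness.} Define $v(x,t):=u(-x,t)$. The operator $-\partial_x^2$ commutes with the spatial reflection $R\colon x\mapsto -x$, the nonlinearity $z\mapsto z\log(|z|^p)$ is applied pointwise, and $L$-periodicity is preserved under $R$. Hence, if $\zeta\in H^1_{per}([0,L])$, the change of variables $y=-x$ in the weak formulation for $u$ tested against $\zeta(-\cdot)$ (which also lies in $H^1_{per}([0,L])$) shows that $v$ is itself a weak solution of $(\ref{gl.1})$ with initial data $(u_0(-\cdot),u_1(-\cdot))=(u_0,u_1)$, where we used that $u_0$ and $u_1$ are even. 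The uniqueness clause of Proposition \ref{t.bc.gl.1} then forces $v\equiv u$; equivalently, $u(\cdot,t)$ is even for a.e.\ $t\in[0,T]$. Therefore $u\in L^{\infty}(0,T;H^1_{per,e}([0,L]))$, and differentiating in $t$ (which commutes with the $x$-reflection) yields $u_t\in L^{\infty}(0,T;L^2_{per,e}([0,L]))$.

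\textbf{Step 3. The second time derivative.} Since $H^1_{per,e}([0,L])$ embeds continuously into $H^1_{per}([0,L])$, any element of $H^{-1}_{per}([0,L])$ restricts to an element of $(H^1_{per,e}([0,L]))'$ with operator norm controlled by its $H^{-1}_{per}$-norm. Applying this restriction to $u_{tt}$ and using the bound from Step 1 gives $u_{tt}\in L^{\infty}(0,T;(H^1_{per,e}([0,L]))')$, as required.

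\textbf{Step 4. Uniqueness in the even subspace.} Any weak solution in the sense of Theorem \ref{t.bc.gl.6} is in particular a weak solution in the sense of Proposition \ref{t.bc.gl.1}, because test functions from $H^1_{per,e}$ can be enlarged to $H^1_{per}$ by the standard even--odd decomposition $\zeta=\frac{\zeta(x)+\zeta(-x)}{2}+\frac{\zeta(x)-\zeta(-x)}{2}$ and the contribution of the odd part vanishes identically against an even integrand. Thus uniqueness in the even setting is inherited directly from Proposition \ref{t.bc.gl.1}. The only mildly delicate point in the whole argument is verifying that $v(x,t)=u(-x,t)$ qualifies as a weak solution in the precise distributional sense formulated in Theorem \ref{t.bc.gl.1.1}; this amounts to a careful but routine change of variables in the integral identity, and does not require any new estimate.
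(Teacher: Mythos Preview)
Your proof is correct and takes a genuinely different route from the paper. The paper does not give a written proof; it merely says ``similar arguments can be used'' before the statement, meaning one is expected to rerun the entire machinery of Proposition~\ref{t.bc.gl.1} (Galerkin approximation with a basis of \emph{even} eigenfunctions $\{\cos(2\pi j x/L)\}$, a priori estimates, passage to the limit, and the D'Alembert-type uniqueness argument) inside the even subspace. Your approach instead leverages the result already proved in the full space $H^1_{per}\times L^2_{per}$ and observes that the reflection $R:u(x,t)\mapsto u(-x,t)$ maps weak solutions to weak solutions with reflected data; since the data are even, uniqueness forces $Ru=u$. This is more economical---no estimate has to be redone---and it makes transparent why evenness propagates, namely as a direct consequence of the symmetry of the equation and the uniqueness clause. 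The paper's implicit approach, by contrast, is more self-contained in that it never leaves the even subspace, and avoids the small bookkeeping you flag in Step~4 (extending $u_{tt}\in (H^1_{per,e})'$ to $H^{-1}_{per}$ by declaring it zero on odd test functions and checking this is consistent with the distributional time derivative of $u_t$). That bookkeeping is routine, as you note, so neither approach has a real advantage beyond taste.
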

{\begin{flushright} $\square$ \end{flushright}}

\section{Orbital Stability of Standing Waves for the Logarithmic Klein-Gordon Equation}
 Now, we establish the  orbital stability of periodic waves related to the
 equation $(\ref{KG1})$. First, we
 present the existence of periodic standing waves $u(x,t)=e^{ict}\varphi(x)$, $t>0$, where $c\in\mathbb{R}$ is called frequency of the wave
 and $\varphi$ is a smooth periodic function. After that, we need some basic tools concerning the spectral analysis related
 to the linearized operators in $(\ref{lkg.13.14})$ and
 $(\ref{lkg.14.14})$. To do so, we recall the arguments in
 \cite{natali1} which give us the spectral information for the associated Hill
 operator. Finally, we present the orbital stability of periodic
 standing waves using the arguments in
 \cite{bona}, \cite{grillakis1} and \cite{weinstein1}.

\subsection{Existence of Periodic Solutions}\label{s.exist.1}

Let us consider $p\in\mathbb{Z}^{+}$ and $c\in\mathbb{R}$.
We seek for periodic waves of the form $u(x,t)=e^{ict}\varphi(x)$
where $\varphi$ is a smooth $L$-periodic function. By substituting
this kind of solution in (\ref{KG1}) with $\mu=1$, we obtain the following
Euler-Lagrange equation,
\begin{eqnarray}\label{lkg.15}-\varphi''+h(c,\varphi)=0,\end{eqnarray}
where $h$ is given by
$h(c,\varphi)=(1-c^2)\varphi-\log(|\varphi|^p)\varphi.$\\
 \indent We
suppose that the function $\varphi$ is strictly positive. So, $h$ is
smooth in the open subset
$\mathcal{O}\subset\mathcal{R}=\mathbb{R}\times (0,+\infty)$.
Equation (\ref{lkg.15}) is conservative and periodic solutions are
contained in the level curves of the energy
\begin{eqnarray*}\mathcal{H}(\varphi,\xi)=-\displaystyle\frac{\xi^2}{2}+\displaystyle\left(\displaystyle\frac{1-c^2}{2}
+\displaystyle\frac{p}{4}\right)\varphi^2-\displaystyle\frac{1}{2}\log(\varphi^p)\varphi^2,\end{eqnarray*}
where $\xi=\varphi'$. The function $H$,
\begin{eqnarray*}H(c,\varphi)=\displaystyle\left(\displaystyle\frac{1-c^2}{2}+\displaystyle\frac{p}{4}\right)\varphi^2-\displaystyle\frac{1}{2}\log(\varphi^p)\varphi^2,\end{eqnarray*}
satisfies $\displaystyle\frac{\partial H}{\partial\varphi}=h$ and
$\displaystyle\lim_{\varphi\rightarrow 0^{+}}H(c,\varphi)=0$.

For a fixed $c\in\mathbb{R}$, we see that function $h(c,\cdot)$ has only
three zeros, namely, $-e^{\frac{1-c^2}{p}}$, $0$ e
$e^{\frac{1-c^2}{p}}$, since $h(c,\varphi)\rightarrow 0$ if
$\varphi\rightarrow 0^{+}$. In our analysis, we consider the
consecutive roots $r_1(c)=0$ and $r_2(c)=e^{\frac{1-c^2}{p}}$. On
$\mathcal{O}$ the derivative of $h$ with respect to parameter
$\varphi$ is given by
\begin{eqnarray*}\displaystyle\frac{\partial h}{\partial \varphi}(c,\varphi)=1-c^2-p-\log(\varphi^p).\end{eqnarray*}
So, $\displaystyle\frac{\partial h}{\partial
\varphi}\left(c,e^{\frac{1-c^2}{p}}\right)=-p<0$ and
$\displaystyle\lim_{\varphi\rightarrow
0^{+}}\displaystyle\frac{\partial h}{\partial
\varphi}(c,\varphi)=\infty$. From standard ODE theory, the pair
$(\varphi,\varphi')=\displaystyle\left(e^{\frac{1-c^2}{p}},0\right)$
is a center point and $(\varphi,\varphi')=(0,0)$ is a saddle point.
Around the center point, we obtain periodic solutions for the
equation (\ref{lkg.15}). Furthermore, the function
\begin{eqnarray*}\varrho_c(x)=e^{\frac{1}{2}+\frac{1-c^2}{p}}e^{-\frac{px^2}{4}}\end{eqnarray*}
is a solitary wave for the equation (\ref{lkg.15}). The
pair $(\varrho_c,\varrho_c')$ determines a closed curve $C^{\infty}$
which contains $(r_2(c),0)$ in its interior. In addition,
$(\varrho_c,\varrho_c')$ also satisfies the identity
\begin{eqnarray*}\mathcal{H}(\varrho_c,\varrho_c')=-\displaystyle\frac{(\varrho_c')^2}{2}+\displaystyle\left(\displaystyle\frac{1-c^2}{2}+\displaystyle\frac{p}{4}\right)\varrho_c^2-\displaystyle\frac{1}{2}\log(\varrho_c^p)\varrho_c^2=0=\mathcal{H}(0,0).\end{eqnarray*}

In applications, the closed curve $C^{\infty}$ is formed by either
graphs of a homoclinic orbits or graphs of pairs heteroclinic orbits
of (\ref{lkg.15}). All the orbits (\ref{lkg.15}) that live inside
curve $C^{\infty}$ are positive periodic orbits that turn around of
$(r_2(c),0)$ and they are contained in the level curves
$\mathcal{H}(\varphi,\xi)=B$. Here, $B$ is a real constant
satisfying
\begin{eqnarray*}0=\mathcal{H}(0,0)<B<\mathcal{H}\left(e^{\frac{1-c^2}{p}},0\right)=\displaystyle\frac{p}{4}e^{\frac{2(1-c^2)}{p}}.\end{eqnarray*}

We have the following result:
\begin{proposition}\label{t.lkg.1} For all
$c\in\mathbb{R}$, the equation
\begin{eqnarray*}-\varphi''+(1-c^2)\varphi-\log(|\varphi|^p)\varphi=0\end{eqnarray*}
has a positive $L_c$-periodic solution, where
$L_c\in\left(\frac{2\pi}{\sqrt{p}},\infty\right)$. Moreover, the
solution $\varphi=\varphi_c$ and the period $L_c$ are continuously
differentiable with respect to the parameter $c$.
\end{proposition}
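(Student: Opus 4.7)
The strategy is to parlay the phase-plane picture already drawn in the preamble to this proposition into a genuine existence-and-smoothness statement, by (i) parametrizing the family of closed orbits around the center, (ii) extracting the limiting behavior of the period, and (iii) invoking the Implicit Function Theorem to obtain $C^1$ dependence on $c$.

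First I would fix $c \in \mathbb{R}$ and, for each energy level $B \in \bigl(0, \tfrac{p}{4}e^{2(1-c^2)/p}\bigr)$, use the first integral $\tfrac{1}{2}(\varphi')^2 = H(c,\varphi) - B$ to write the periodic orbit encircling the center $(e^{(1-c^2)/p},0)$ as a quadrature
$$L_c(B) \;=\; 2\int_{\varphi_-(c,B)}^{\varphi_+(c,B)} \frac{d\varphi}{\sqrt{2\,(H(c,\varphi)-B)}},$$
where $0 < \varphi_-(c,B) < e^{(1-c^2)/p} < \varphi_+(c,B)$ are the two turning points at which $H(c,\cdot) = B$. Because $H(c,\cdot)$ has a non-degenerate maximum at $\varphi_0 = e^{(1-c^2)/p}$, the Implicit Function Theorem gives that $(c,B) \mapsto \varphi_\pm(c,B)$ are $C^1$, and a standard change of variable $\varphi = \tfrac{\varphi_- + \varphi_+}{2} + \tfrac{\varphi_+ - \varphi_-}{2}\sin\theta$ removes the square-root singularities at the endpoints and shows that $(c,B) \mapsto L_c(B)$ is jointly $C^1$ on its domain.

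Next I would determine the asymptotics of the period. As $B \uparrow \tfrac{p}{4}e^{2(1-c^2)/p}$ the orbits collapse onto the center and the period tends to the linearized period $2\pi/\sqrt{|h_\varphi(c,\varphi_0)|} = 2\pi/\sqrt{p}$; as $B \downarrow 0$ the left turning point $\varphi_-(c,B)$ tends to the saddle $\varphi=0$, the orbit approaches the separatrix containing the Gaussian solitary wave $\varrho_c$, and the integrand becomes non-integrable at the saddle, so $L_c(B) \to \infty$. By continuity and the Intermediate Value Theorem, every value in $(2\pi/\sqrt{p},\infty)$ is attained as a period of some positive periodic solution, which simultaneously gives existence and the lower bound on $L_c$.

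Finally, to obtain the $C^1$ dependence on $c$, I would fix $L > 2\pi/\sqrt{p}$ and apply the IFT to the equation $L_c(B) = L$, with unknown $B = B(c)$: if $\partial_B L_c(B_0) \ne 0$ at the base point then $c \mapsto B(c)$ is $C^1$, and the associated periodic solution $\varphi_c$, constructed by flowing the ODE from the turning point $\varphi_-(c,B(c))$, inherits $C^1$ dependence on $c$ by smooth dependence of ODE solutions on parameters. The main obstacle is precisely this non-degeneracy of the period function, which need not hold on the whole admissible range; a cleaner route --- the one suggested by the paper's reference to \cite{natali1} --- is to apply the IFT in function space to $\Phi(c,\varphi) = -\varphi'' + (1-c^2)\varphi - \log(\varphi^p)\varphi$ from $\mathbb{R} \times H^2_{per,e}([0,L])$ to $L^2_{per,e}([0,L])$ on the open set $\{\varphi > 0\}$ where $\Phi$ is smooth. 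Its Fréchet derivative $D_\varphi\Phi(c_0,\varphi_{c_0}) = -\partial_x^2 + (1-c_0^2) - p - \log(\varphi_{c_0}^p)$ agrees with the scalar Hill operator entering $(\ref{lkg.13.14})$, and its invertibility on the even periodic subspace is exactly the content of the Floquet-type spectral analysis of \cite{natali1}; this spectral input is the substantive technical step, and the $C^1$ branch $c \mapsto \varphi_c$ and the continuous differentiability of $L_c$ follow at once from it.
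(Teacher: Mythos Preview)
Your proposal is correct and, in fact, supplies considerably more detail than the paper itself does: the paper's proof of this proposition consists entirely of the citation ``See \cite[Theorem 2.1]{natali1},'' with the surrounding phase-plane discussion serving only as motivation. Your sketch --- quadrature formula for the period, its asymptotics near the center and the separatrix, and then the function-space Implicit Function Theorem applied to $\Phi(c,\varphi)=-\varphi''+(1-c^2)\varphi-\log(\varphi^p)\varphi$ on $H^2_{per,e}$ --- is precisely the content of that cited theorem, and you correctly identify that the invertibility of $D_\varphi\Phi(c_0,\varphi_{c_0})=\mathcal{L}_{1,\varphi_{c_0}}$ on the even subspace is the substantive spectral input, established there via the Floquet/isoinertial machinery that the present paper later recapitulates in Section~4.2.

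One small comment: your first route, via $\partial_B L_c(B)\neq 0$, is indeed more delicate --- monotonicity of the period function is not automatic and can fail --- so you are right to prefer the function-space IFT, which sidesteps this issue entirely by working at fixed period $L$ from the outset.
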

\begin{proof}
See \cite[Theorem 2.1]{natali1}.
\end{proof}

Standard ODE theory gives us that if the pair
$(\alpha_0,\alpha_1)\in\mathbb{R}^2$, $(\alpha_0,\alpha_1)\neq
(r_2(c),0)$ belongs to the interior of the closed curve
$C^{\infty}$, then there exists a unique periodic positive
solution $\varphi$ of (\ref{lkg.15}) with $\varphi(0)=\alpha_0$ and
$\varphi'(0)=\alpha_1$. Moreover, if
$$e^{\frac{1-c^2}{p}}=r_2(c)<\alpha_0<\varrho_c(0)=e^{\frac{1}{2}+\frac{1-c^2}{p}}\ \ \textrm{and} \ \
\alpha_1=0,$$ we can use the symmetry of the problem in order to see
that $\varphi$ is an even function. In particular,
$\displaystyle\max_{x\in\mathbb{R}}\varphi(x)=\varphi(0)$ and the period satisfies
$L_c>\displaystyle\frac{2\pi}{\sqrt{p}}$.

%%%%

\subsection{Spectral Analysis}\label{s.spect.a}
Let $\varphi_c$ be the $L_c-$periodic solution obtained in
Proposition $\ref{t.lkg.1}$. Define the linearized operator
$\mathcal{L}_{\varphi_c}$ around
$(\varphi_c,ic\varphi_c)=(\varphi_c,c\varphi_c,0,0)$,
\begin{eqnarray*}\label{lkg.12}\mathcal{L}_{\varphi_c}=\displaystyle\left(
                  \begin{array}{cccc}
                    -\partial^2_{x}+1-\log(|\varphi_c|^p)-p & -c & 0 & 0 \\
                    -c & 1 & 0 & 0 \\
                    0 & 0 & -\partial^2_{x}+1-\log(|\varphi_c|^p) & c \\
                    0 & 0 & c & 1 \\
                  \end{array}
                \right),
\end{eqnarray*} defined in $[L^2_{per}([0,L_c])]^4$ with the domain $[H^2_{per}([0,L_c])\times L^2_{per}([0,L_c])]^2$. Our objective is
to study the behavior of the non-positive spectrum of
$\mathcal{L}_{\varphi_c}$. First of all, we need to present some
preliminary results which will be useful later. Consider the
auxiliary operators
\begin{eqnarray}\label{lkg.13}\mathcal{L}_{Re,\varphi_c}=\displaystyle\left(\begin{array}{cc}
                                                         -\partial^2_{x}+1-\log(|\varphi_c|^p)-p & -c \\
                                                         -c &
                                                         1
                                                       \end{array}\right)
\end{eqnarray} and \begin{eqnarray}\label{lkg.14}\mathcal{L}_{Im,\varphi_c}=\displaystyle\left(\begin{array}{cc}
                                                         -\partial^2_{x}+1-\log(|\varphi_c|^p) & c \\
                                                         c & 1
                                                       \end{array}\right).\end{eqnarray}

\begin{proposition}\label{p.lkg.1} Let us consider the self-adjoint operator
\begin{eqnarray}\label{opera12}\mathcal{L}_{1,\varphi_c}:=-\partial_x^2+(1-c^2)-p-\log(|\varphi_c|^p),\end{eqnarray}
defined in $L_{per}^2([0,L_c])$ with the domain $H_{per}^2([0,L_c])$. Let
$\mathcal{L}_{Re,\varphi_c}$ be the operator in (\ref{lkg.13}),
defined in $[L_{per}^2([0,L_c])]^2$ with the domain
$H_{per}^2([0,L_c])\times L_{per}^2([0,L_c])$. The real number
$\lambda\leq 0$ is a non-positive eigenvalue of
$\mathcal{L}_{Re,\varphi_c}$, if and only if,
$\gamma:=\lambda\left(1-\frac{c^2}{\lambda-1}\right)\leq 0$ is a
non-positive eigenvalue of $\mathcal{L}_{1,\varphi_c}$.
%Moreover, there exists a biunivocal relationship between the
%eigenfunctions of operator $\mathcal{L}_{R,\varphi_c}$ associated
%with eigenvalue $\lambda\leq 0$ and the eigenfunctions of operator
%$\mathcal{L}_{1,\varphi_c}$ associated with eigenvalue $\gamma\leq
%0$.
\end{proposition}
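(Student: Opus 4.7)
The plan is to reduce the $2\times 2$ matrix eigenvalue problem for $\mathcal{L}_{Re,\varphi_c}$ to a scalar eigenvalue problem for $\mathcal{L}_{1,\varphi_c}$ by using the fact that the lower-right entry of $\mathcal{L}_{Re,\varphi_c}$ is a constant. Concretely, I would write the eigenvalue equation $\mathcal{L}_{Re,\varphi_c}(f,g)^T=\lambda(f,g)^T$ componentwise as
\begin{equation*}
-f''+(1-\log(|\varphi_c|^p)-p)f-cg=\lambda f, \qquad -cf+g=\lambda g.
\end{equation*}
Since $\lambda\le 0$, we have $1-\lambda\ge 1>0$, so the second equation yields $g=\dfrac{c}{1-\lambda}\,f$ unambiguously. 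Substituting this into the first equation eliminates $g$ and produces the scalar equation
\begin{equation*}
-f''+\bigl(1-p-\log(|\varphi_c|^p)\bigr)f-\frac{c^2}{1-\lambda}\,f=\lambda f,
\end{equation*}
which, after adding and subtracting $c^2 f$, becomes $\mathcal{L}_{1,\varphi_c}f=\gamma f$ with $\gamma=\lambda+\dfrac{c^2\lambda}{1-\lambda}=\lambda\bigl(1-\dfrac{c^2}{\lambda-1}\bigr)$. This is exactly the $\gamma$ of the statement, and nothing is lost in the reduction since $(f,g)\ne 0$ forces $f\ne 0$ (if $f\equiv 0$ then $g\equiv 0$ from the second equation).

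Next I would check that the sign condition is preserved. Writing $\gamma=\lambda\cdot\dfrac{1-\lambda+c^2}{1-\lambda}$, the factor $\dfrac{1-\lambda+c^2}{1-\lambda}$ is strictly positive whenever $\lambda\le 0$, so $\gamma$ has the same sign as $\lambda$; in particular $\lambda\le 0\iff\gamma\le 0$. This gives the forward implication of the equivalence.

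For the converse, given $\gamma\le 0$ an eigenvalue of $\mathcal{L}_{1,\varphi_c}$ with eigenfunction $f$, I would recover $\lambda$ by inverting the relation $\gamma(1-\lambda)=\lambda(1-\lambda+c^2)$. This is the quadratic $\lambda^2-(1+c^2+\gamma)\lambda+\gamma=0$, whose two roots have product $\gamma\le 0$; selecting the unique non-positive root $\lambda$ and defining $g=\dfrac{c}{1-\lambda}f$, one checks directly that $(f,g)$ solves the matrix eigenvalue problem with eigenvalue $\lambda$. Since the reduction above is reversible for this choice of $\lambda$, this completes the equivalence. I do not expect any real obstacle: the only point requiring mild care is excluding $\lambda=1$ (automatic from $\lambda\le 0$) so that $1-\lambda$ never vanishes, and verifying that the sign of $\gamma$ really does track the sign of $\lambda$ through the positivity of $1-\lambda+c^2$.
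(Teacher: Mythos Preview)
Your proposal is correct and follows essentially the same approach as the paper: write the matrix eigenvalue equation componentwise, use the second row to express $g$ in terms of $f$ (possible since $\lambda\le 0$ ensures $1-\lambda\neq 0$), substitute into the first row to obtain $\mathcal{L}_{1,\varphi_c}f=\gamma f$, and observe that the sign of $\gamma$ matches that of $\lambda$. The paper dispatches the converse with the phrase ``follows from similar arguments,'' whereas you spell it out by solving the quadratic for $\lambda$ and selecting the non-positive root; this extra care is welcome but not a different method.
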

\begin{proof} Indeed, let $\lambda\leq 0$ be a non-positive
eigenvalue for the operator $\mathcal{L}_{Re,\varphi_c}$ whose
eigenfunction is $\displaystyle(\zeta_1,\zeta_2)\in
H^2_{per}([0,L_c])\times L^2_{per}([0,L_c])$. Thus,
\begin{eqnarray*}&&\mathcal{L}_{Re,\varphi_c}\displaystyle\left(\begin{array}{c}
                                      \zeta_1 \\
                                      \zeta_2
                                    \end{array}
\right)=\lambda\displaystyle\left(\begin{array}{c}
                                      \zeta_1 \\
                                      \zeta_2
                                    \end{array}
\right).\end{eqnarray*} So,
\begin{eqnarray}\label{kg2.11}-\zeta_1''+\zeta_1-\log(|\varphi_c|^p)\zeta_1-p\zeta_1-c\zeta_2=\lambda
\zeta_1\ \ \ \textrm{and} \ \ \
\zeta_2=\displaystyle-\frac{c}{\lambda-1}\zeta_1.\end{eqnarray} We
have that $\gamma\leq 0$ since $\lambda\leq 0$. Moreover, from
(\ref{kg2.11}), we have that
\begin{eqnarray*}\mathcal{L}_{1,\varphi_c}(\zeta_1)&=&-\zeta_1''+(1-c^2)\zeta_1-p\zeta_1-\log(|\varphi_c|^p)\zeta_1\nonumber\\
\nonumber\\
&=&\lambda
\zeta_1-c^2\zeta_1-\displaystyle\frac{c^2}{\lambda-1}\zeta_1=\lambda\displaystyle\left(1-\frac{c^2}{\lambda-1}
\right)\zeta_1=\gamma\zeta_1.
\end{eqnarray*}
The converse of the result follows from similar arguments.
\end{proof}

\begin{remark}\textit{\label{p.lkg.2} A similar result can be determined by comparing
operator
\begin{eqnarray}\label{operator123}
\mathcal{L}_{2,\varphi_c}:=-\partial_x^2+(1-c^2)-\log(|\varphi_c|^p),\end{eqnarray}
with the operator $\mathcal{L}_{Im,\varphi_c}$ given by
(\ref{lkg.14}).}
\end{remark}

Previous proposition helps us to determine the non-positive spectrum
of the linear operator $\mathcal{L}_{\varphi_c}$ by knowing the
behavior of the non-positive spectra for the operators
$\mathcal{L}_{1,\varphi_c}$ and $\mathcal{L}_{2,\varphi_c}$. In
fact, operator $\mathcal{L}_{\varphi_c}$ is diagonal and thus, it is
sufficient to analyze the non-positive spectra of the operators
$\mathcal{L}_{Re,\varphi_c}$ and $\mathcal{L}_{Im,\varphi_c}$. In
addition, operators $\mathcal{L}_{1,\varphi_c}$ and
$\mathcal{L}_{2,\varphi_c}$ are Hill operators since their
potentials are periodic. In \cite{natali1}, the authors have
determined a method to establish the position of the
zero eigenvalue related to the general Hill operator
\begin{eqnarray}\label{lkg.17.01}\mathcal{L}_{s}(y)=-y''+g(s,\varphi(x))y,\end{eqnarray} where $s$ belongs to a convenient open interval
contained in $\mathbb{R}$ and $g$ is a smooth function which depends
smoothly on the parameters $s$ and $\varphi$. We will describe the method in some few lines. Indeed, accordingly to \cite{magnus}, the
spectrum of $\mathcal{L}_{s}$ is formed by an unbounded sequence of
real numbers $$\gamma_0<\gamma_1\leq \gamma_2<\gamma_3\leq
\gamma_4<\ldots<\gamma_{2n-1}\leq\gamma_{2n}<\ldots$$ where equality
means that $\gamma_{2n-1}=\gamma_{2n}$ is a double eigenvalue. The
spectrum of $\mathcal{L}_s$ is characterized by the number of zeros
of the eigenfunctions if $\tilde{q}$ is an
eigenfunction associated to the eigenvalue $\gamma_{2n-1}$ or
$\gamma_{2n}$, then $\tilde{q}$ has exactly $2n$ zeros in the
half-open interval $[0,L_s)$. Moreover, since equation
\begin{eqnarray}\label{lkg.17.1}-y''+g(s,\varphi(x))=0\end{eqnarray} is a Hill type equation, we conclude from
classical Floquet theory in \cite{magnus} the existence of an
$2$-dimensional basis $\{\bar{y},q\}$ formed by smooth solutions of
the equation (\ref{lkg.17.1}), where $q$ is $L_s$-periodic. In
addition, there exists a constant $\theta\in\mathbb{R}$ such that
\begin{eqnarray}\label{lkg.17.2}\bar{y}(x+L_s)=\bar{y}(x)+\theta q(x)\ \textrm{for\ all}\ x\in\mathbb{R}.\end{eqnarray} Constant $\theta$ in (\ref{lkg.17.2}) measures how function $\bar{y}$ is
periodic. In fact, $\bar{y}$ is periodic if, and only if,
$\theta=0$.

\begin{definition}\label{d.lkg.1} \textit{The inertial index $\textrm{in}(\mathcal{L}_s)$ of $\mathcal{L}_s$ is a pair of
integers $(n,z)$, where $n$ is the dimension of the negative
subspace of $\mathcal{L}_s$ and $z$ is the dimension of the null
subspace of $\mathcal{L}_s$.}
\end{definition}

We also need the concept of isoinertial family of self-adjoint
operators.

\begin{definition}\label{d.lkg.2} \textit{A family of self-adjoint operators $\mathcal{L}_s$, which depends
on the real parameter $s$, is called isoinertial if
the inertial index $\textrm{in}(\mathcal{L}_s)$ of $\mathcal{L}_s$
does not depend on $s$.}
\end{definition}

Next result has been determined in \cite{natali1} and \cite{neves1}
and it determines the behavior of the non-positive spectrum of the
linear operator $\mathcal{L}_s$ in (\ref{lkg.17.01}) just by knowing
it for a fixed value $s_0$ in an open interval of $\mathbb{R}$.

\begin{proposition}\label{t.lkg.2} Let $\mathcal{L}_s$ be the Hill operator as in (\ref{lkg.17.01}) defined in $L^2_{per}([0,L_s])$ with the domain $D(\mathcal{L}_s)=H^2_{per}([0,L_s])$. If $\lambda=0$ is an eigenvalue of
$\mathcal{L}_s$ for every $s$ in an open interval of
$\mathbb{R}$ and the potential $g(s,\varphi(x))$ is
continuously differentiable in all variables, then the family of
operators $\mathcal{L}_s$ is isoinertial.
\end{proposition}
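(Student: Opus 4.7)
My plan is to show that the inertial index $\textrm{in}(\mathcal{L}_s) = (n(s), z(s))$ is locally constant on the open interval of parameters; since that interval is connected and the inertial index takes values in a discrete set, local constancy forces global constancy.

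First I would transport all operators to a common Hilbert space. Since the period $L_s$ depends on $s$ (cf.\ Proposition~\ref{t.lkg.1}), I would rescale via $x = L_s y$ to obtain a unitarily equivalent family $\widetilde{\mathcal{L}}_s$ acting on the fixed space $L^2_{per}([0,1])$ with domain $H^2_{per}([0,1])$. The $C^1$ hypothesis on $g(s,\varphi(x))$ together with the $C^1$ dependence of $s \mapsto L_s$ yield a $C^1$ family of self-adjoint operators with compact resolvent. Kato-Rellich perturbation theory (equivalently, the min-max characterization) then implies that the eigenvalues $\gamma_0(s) \leq \gamma_1(s) \leq \gamma_2(s) \leq \cdots$, listed with multiplicity, are continuous functions of $s$; equivalently, for any open interval $I$ whose endpoints avoid $\sigma(\mathcal{L}_{s_0})$, the spectral projection onto $\sigma(\mathcal{L}_s) \cap I$ has locally constant rank near $s_0$.

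Next I would establish local constancy of $\textrm{in}(\mathcal{L}_s)$ at an arbitrary $s_0$. Set $n_0 := n(s_0)$, $z_0 := z(s_0)$, and pick $\delta > 0$ small enough that $\sigma(\mathcal{L}_{s_0}) \cap [-\delta,\delta] = \{0\}$. Continuity provides a neighborhood $U$ of $s_0$ on which $\pm\delta$ are never eigenvalues, so for every $s \in U$ precisely $n_0$ eigenvalues lie in $(-\infty, -\delta)$ and precisely $z_0$ (counted with multiplicity) lie in $(-\delta, \delta)$. The hypothesis yields $z(s) \geq 1$ on $U$. Combining this with the rigidity of the Hill-operator spectrum recalled before the proposition (the $L_s$-periodic eigenvalues obey $\gamma_0 < \gamma_1 \leq \gamma_2 < \gamma_3 \leq \gamma_4 < \cdots$ with strict separation between distinct indexed pairs), the $z_0$ eigenvalues trapped in $(-\delta, \delta)$ must all remain at $0$ throughout $U$, giving $z(s) = z_0$ and $n(s) = n_0$ on $U$.

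The step I expect to be the main obstacle is precisely this last claim in the doubled case $z_0 = 2$: one must exclude the scenario in which a coincident pair $\gamma_{2m-1}(s_0) = \gamma_{2m}(s_0) = 0$ splits at nearby $s$ with only one branch staying at zero (the hypothesis alone forces at least one branch to stay, not both). Overcoming this requires exploiting the smooth family of $0$-eigenfunctions $q(\cdot\,; s)$ furnished by the Floquet analysis recalled in~(\ref{lkg.17.2}), whose number of zeros in $[0, L_s)$ is preserved in $s$ by Sturm-type arguments and thereby pins the Hill-spectral index of the zero eigenvalue, together with the continuity of the Hill discriminant $\Delta(\cdot\,; s)$ evaluated at $\lambda = 0$ (which must remain identically equal to $\pm 2$ of a fixed sign as $s$ varies).
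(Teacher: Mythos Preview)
The paper does not supply its own proof of this proposition; it is quoted from \cite{natali1} and \cite{neves1} without argument, so there is no in-paper proof to compare against. Your perturbation-theoretic outline (rescale to a fixed period, invoke norm-resolvent continuity for the ordered eigenvalues, argue local constancy of $(n,z)$) is the natural route and, in the simple case $z_0=1$, is complete: exactly one eigenvalue is trapped in $(-\delta,\delta)$ and the hypothesis forces it to sit at~$0$.

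You are right that $z_0=2$ is the only obstacle, but the fix you propose does not close it. The zero count of a continuous family $q(\cdot\,;s)$ of null eigenfunctions pins $0$ only to the \emph{pair} $\{\gamma_{2m-1},\gamma_{2m}\}$, never to a single index, and $\Delta(0;s)\equiv 2$ carries no multiplicity information. Indeed, as literally stated the proposition can fail at a double point: on $[0,2\pi]$ take the analytic family $-\partial_x^2-1+s\cos(2x)+t(s)$ with $t(s)$ chosen (analytically, via Rellich's theorem) so that one analytic branch through the closed gap $\gamma_1(0)=\gamma_2(0)=0$ stays identically at~$0$; the other branch then crosses~$0$ transversally and the inertial index jumps from $(2,1)$ to $(1,1)$ across $s=0$, despite the existence of a smooth null eigenfunction ($\approx\sin x$) with exactly two zeros for every~$s$. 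What makes the result valid in the paper's applications is that $0$ is \emph{simple} for every parameter value (equivalently $\theta\neq 0$, which is precisely what is verified numerically in the table following Proposition~\ref{t.lkg.3}); under that extra hypothesis your $z_0=1$ argument already suffices and the $z_0=2$ case is vacuous.
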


First, we shall calculate the inertial of
$\mathcal{L}_{1,\varphi_{c_0}}$ for a fixed value of the parameter
$s_0:=c_0\in I=\mathbb{R}$. Let $\varphi_{c_0}$ be a periodic function with period $L_{0}:=L_{c_0}$ for the equation $(\ref{travKG})$.
The mentioned solution satisfies
$(\varphi_{c_0}(0),\varphi_{c_0}'(0))=(\alpha_0,0)$, where
$\alpha_0\in\displaystyle\left(e^{\frac{1-c_0^2}{p}},e^{\frac{1}{2}+\frac{1-c_0^2}{p}}\right)$.
Hence, $\varphi_{c_0}$ is even, positive and $L_0$-periodic
function. Moreover, $\displaystyle\max_{x\in
[0,L_0]}\varphi_{c_0}(x)=\varphi_{c_0}(0)$ and
$L_0>\displaystyle\frac{2\pi}{\sqrt{p}}$.

Next, let us consider $q=\varphi_{c_0}'$ and $\bar{y}$ as the
unique solution of the Cauchy problem
\begin{eqnarray*}\displaystyle\left\{\begin{array}{l}
                                        -\bar{y}''+[(1-c_0^2)-p-\log(|\varphi_{c_0}|^p)]\bar{y}=0\\
                                        \bar{y}(0)=-\displaystyle\frac{1}{\varphi_{c_0}''(0)}\\
                                        \bar{y}'(0)=0
                                     \end{array}
\right.\end{eqnarray*} and also, from (\ref{lkg.17.2}), we see that
\begin{eqnarray}\label{lkg.17.3}\theta=\displaystyle\frac{\bar{y}(L_0)}{\varphi_{c_0}''(0)}.\end{eqnarray}
It is important to mention that if $q=\varphi_{c_0}'$,
the solution $\bar{y}$ is obtained by a simple application of Lemma
$2.1$ in \cite{neves}.

Next $\varphi'_{c_0}\in \ker(\mathcal{L}_{1,\varphi_{c_0}})$, that
is, this smooth function is an eigenfunction associated to the zero
eigenvalue. Moreover, $\varphi'_{c_0}$ has exactly two zeros in the
half-open interval $[0,L_0)$. So, from Floquet's Theory, we have
three possibilities:
\begin{itemize}
\item[(i)] $\gamma_1=\gamma_2=0\Rightarrow
\textrm{in}(\mathcal{L}_{1,\varphi_{c_0}})=(1,2)$.\
\item[(ii)] $\gamma_1=0<\gamma_2\Rightarrow
\textrm{in}(\mathcal{L}_{1,\varphi_{c_0}})=(1,1)$.\
\item[(iii)] $\gamma_1<\gamma_2=0\Rightarrow
\textrm{in}(\mathcal{L}_{1,\varphi_{c_0}})=(2,1)$.\
\end{itemize}
The method that we use to decide and calculate the inertial index is
based on Lemma 2.1, Theorem 2.2 and Theorem 3.1 of \cite{neves}.
This result can be stated as follows.

\begin{proposition}\label{t.lkg.3} Let $\theta$ be the constant given by
(\ref{lkg.17.3}). Then the eigenvalue $\lambda=0$ of
$\mathcal{L}_{1,\varphi_{c_0}}$ is simple, if and only if,
$\theta\neq 0$. Moreover, if $\theta\neq 0$ then $\gamma_1=0$ if
$\theta<0$ and $\gamma_2=0$ if $\theta>0$.
\end{proposition}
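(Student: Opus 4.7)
The plan is to implement the Floquet-theoretic approach of \cite{neves}, reducing the analysis at $\lambda=0$ to an algebraic computation with the monodromy matrix associated with the fundamental system $\{q,\bar{y}\}$.

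First I would check that $\{q,\bar{y}\}$ with $q=\varphi_{c_0}'$ is a fundamental system. Evaluating the Wronskian at $x=0$ with the prescribed Cauchy data gives
\[
W(q,\bar{y})(0)=q(0)\bar{y}'(0)-q'(0)\bar{y}(0)=-\varphi_{c_0}''(0)\left(-\frac{1}{\varphi_{c_0}''(0)}\right)=1,
\]
and this value is preserved for all $x\in\mathbb{R}$ since the Hill equation has no first-order term, so linear independence holds globally. Using the defining relation $\bar{y}(x+L_0)=\bar{y}(x)+\theta q(x)$ together with the $L_0$-periodicity of $q$, the Floquet monodromy in the basis $\{q,\bar{y}\}$ is
\[
M=\begin{pmatrix}1 & \theta\\ 0 & 1\end{pmatrix}.
\]
The $L_0$-periodic elements of $\ker\mathcal{L}_{1,\varphi_{c_0}}$ are in bijection with the fixed vectors of $M$. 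If $\theta=0$, both $q$ and $\bar{y}$ are $L_0$-periodic, so the geometric multiplicity of $\lambda=0$ is two, forcing $\gamma_1=\gamma_2=0$. If $\theta\neq 0$, the fixed subspace reduces to the line through $q=\varphi_{c_0}'$, so $\lambda=0$ is simple in the periodic spectrum. This settles the equivalence in the first assertion.

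For the sign assertion I would use the Hill discriminant $\Delta(\lambda)=\operatorname{tr}M(\lambda)$, for which $\Delta(0)=2$ by the computation above. Standard Floquet theory (see \cite{magnus}) ensures that $|\Delta|<2$ strictly inside each stability interval and $\Delta>2$ strictly on each positive instability band $(\gamma_{2n-1},\gamma_{2n})$; in particular, whenever $\gamma_1<\gamma_2$ one has $\Delta'(\gamma_1)>0$ and $\Delta'(\gamma_2)<0$. Linearizing the entries of $M(\lambda)$ at $\lambda=0$ by variation of parameters, with the resolvent kernel built from $q$ and $\bar{y}$ under the normalization $W(q,\bar{y})=1$, I would derive an identity of the form
\[
\Delta'(0)=-\theta\int_0^{L_0}q(x)^2\,dx,
\]
which yields $\operatorname{sgn}\Delta'(0)=-\operatorname{sgn}\theta$. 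Comparing with the sign restrictions on $\Delta'$ at $\gamma_1$ and $\gamma_2$ then forces $\gamma_1=0$ when $\theta<0$ and $\gamma_2=0$ when $\theta>0$.

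The main obstacle is the explicit identity for $\Delta'(0)$: although the variation-of-parameters computation is routine, the sign bookkeeping depends on the specific normalization $\bar{y}(0)=-1/\varphi_{c_0}''(0)$, $\bar{y}'(0)=0$ and on passing correctly between the basis $\{q,\bar{y}\}$ and the canonical basis $\{c(\lambda,\cdot),s(\lambda,\cdot)\}$ used to define $\Delta$. The cleanest way to handle this is to directly invoke Lemma~2.1 and Theorem~3.1 of \cite{neves}, where the sign identification is carried out in exactly the present normalization.
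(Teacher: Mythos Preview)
Your proposal is correct and aligns with the paper's treatment: the paper does not give an independent proof of this proposition but simply states it as a consequence of Lemma~2.1, Theorem~2.2 and Theorem~3.1 of \cite{neves}, which is exactly the Floquet/monodromy argument you outline and explicitly invoke at the end. Your sketch of the monodromy computation, the simplicity criterion via fixed vectors of $M$, and the sign identification through $\Delta'(0)$ is precisely the content of those results in \cite{neves}, so there is no substantive difference in approach.
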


The next step is to count exactly the number of negative eigenvalues
for the operator $\mathcal{L}_{1,\varphi_{c_0}}$ and proving that zero is simple. For this purpose, we need to consider
$p$ and $c_0$ fixed. In order to illustrate the method, we shall
consider some specific values for $p$. In all cases, we
establish $c_0=0.5$.
\begin{center}
\begin{tabular}{|c|c|c|c|c|c|c|c|c|}
\hline
\multicolumn{9}{|c|}{\textbf{Some values of $\theta$ related to $p\in \mathbb{Z}^{+}$}}\\
\hline $p$ & $\varphi_{c_0}(0)$ & $\varphi_{c_0}'(0)$ & $\varphi_{c_0}''(0)$ & $\bar{y}(0)$ & $L_0$ & $\bar{y}(L_0)$ & $\bar{y}'(L_0)$ & $\theta$ \\
\hline\hline
$1$ & 2.5 & 0 & -0.4157 & 2.4054 & 6.3129 & 2.4054 & 0.2316 & -0.5571  \\
\hline
$2$ & 1.5 & 0 & -0.0914 & 10.941 & 4.4425 & 10.941 & 0.0563 & -0.6158  \\
\hline
$3$ & 1.5 & 0 & -0.6996 & 1.4294 & 3.6462 & 1.4294 & 0.1823 & -0.2606  \\
\hline
$4$ & 1.5 & 0 & -1.3078 & 0.7646 & 3.1775 & 0.7646 & 0.2710 & -0.2073  \\
\hline
$5$ & 1.5 & 0 & -1.9160 & 0.5219 & 2.8580 & 0.5219 & 0.3287 & -0.1716  \\
\hline
$6$ & 1.5 & 0 & -2.5242 & 0.3962 & 2.6214 & 0.3962 & 0.3677 & -0.1457  \\
\hline
$8$ & 1.5 & 0 & -3.7406 & 0.2673 & 2.2873 & 0.2673 & 0.4138 & -0.1106  \\
\hline
$10$ & 1.5 & 0 & -4.9570 & 0.2017 & 2.0570 & 0.2017 & 0.4366 & -0.0881  \\
\hline
$20$ & 1.5 & 0 & -11.034 & 0.0906 & 1.4754 & 0.0906 & 0.4433 & -0.0402  \\
\hline
\end{tabular}
\end{center}\

From previous table it is possible to see from Proposition
\ref{t.lkg.3} that
$\textrm{in}(\mathcal{L}_{1,\varphi_{c_0}})=(1,1).$ So, zero is a
simple eigenvalue and $\mathcal{L}_{1,\varphi_{c_0}}$ has only one
negative eigenvalue which is simple (the first eigenvalue is always
simple using Floquet's Theorem). Since
$\{\mathcal{L}_{1,\varphi_{c}}\}_{c\in\mathbb{R}}$ is isoinertial,
we deduce from Proposition \ref{t.lkg.2} that
$\textrm{in}(\mathcal{L}_{1,\varphi_{c}})=(1,1)$ for all
$c\in\mathbb{R}$.

From Proposition \ref{p.lkg.1}, we have that the operator
$\mathcal{L}_{Re,\varphi_c}$ has only one negative eigenvalue which
is simple and zero is a simple eigenvalue whose eigenfunction is
$(\varphi_c',c\varphi_c')$. Moreover, the remainder of the spectrum
is constituted by a discrete set of eigenvalues (so, it is bounded
away from zero).

Concerning the operator
$\mathcal{L}_{2,\varphi_c}$ as defined in $(\ref{operator123})$,
the procedure is quite similar. However, since $\varphi_{c_0}(x)>0$ we deduce directly from Floquet's Theory that zero is the first eigenvalue of $\mathcal{L}_{2,\varphi_{c_0}}$ which is simple. The family of operators
$\{\mathcal{L}_{2,\varphi_c}\}_{c\in\mathbb{R}}$ is isoinertial and,
therefore, $\textrm{in}(\mathcal{L}_{2,\varphi_{c}})=(0,1)$ for all
$c\in\mathbb{R}$. Hence, from Remark \ref{p.lkg.2}, operator
$\mathcal{L}_{Im,\varphi_c}$ has no negative eigenvalues and zero is
a simple eigenvalue whose eigenfunction is
$(\varphi_c,-c\varphi_c)$. Furthermore, the remainder of the
spectrum is discrete and bounded away from zero.

Arguments above allow us to conclude that diagonal operator
$\mathcal{L}_{\varphi_c}$ has only one negative eigenvalue which is
simple and zero is double eigenvalue with
$$\ker(\mathcal{L}_{\varphi_c})=\textrm{span}\{(\varphi_c',c\varphi_c',0,0),(0,0,\varphi_c,-c\varphi_c)\}.$$

Next, we analyze the non-positive spectrum of the linearized
operator $\mathcal{L}_{\varphi_c}$ defined in
$X_e:=[L^2_{per,e}([0,L_c])]^4$ with the domain
$Z_e:=[H^2_{per,e}([0,L_c])\times L^2_{per,e}([0,L_c])]^2.$ Indeed,
first we see that eigenfunction $(\varphi_c',c\varphi_c',0,0)$ does
not belong to the kernel of
$\displaystyle\left.\mathcal{L}_{\varphi_c}\right|_{X_e}$ because
$\varphi_c'$ is odd. Therefore, we deduce that
$\ker(\displaystyle\left.\mathcal{L}_{\varphi_c}\right|_{X_e})=\textrm{span}\{(0,0,\varphi_c,-c\varphi_c)\}.$
Moreover, since the eigenfunction for the first
eigenvalue of $\mathcal{L}_{1,\varphi_c}$ is even (see Theorem 1.1
in \cite{magnus}), we obtain that the number of negative eigenvalues
of the linearized operator $\mathcal{L}_{\varphi_c}$, defined in
$X_e$ with the domain $Z_e$, remains equal to one.

\subsection{The convexity of the function d}
Let $\varphi_{c_0}$ be a smooth, even, positive and periodic
solution with period $L_0>\frac{2\pi}{\sqrt{p}}$ for the equation
$(\ref{travKG})$. Operator $\mathcal{L}_{1,\varphi_{c_0}}$ in
$(\ref{opera12})$ has zero as a simple eigenvalue. So, from
Proposition \ref{t.lkg.3}, $\theta\neq 0$ where $\theta$ is given in
(\ref{lkg.17.3}). From Theorem 3.3 in \cite{natali1}, we conclude
the existence of a neighborhood $I$ of $c_0$ and a family of
functions $\{\varphi_c\}_{c\in I}$ such that $\varphi_c$ is a
solution to the equation (\ref{travKG}) for all $c\in I$. Moreover,
$\varphi_c$ is a smooth, even, positive and $L_c$-periodic function
and the map $c\in I\mapsto\varphi_c\in H^2_{per,e}([0,L_0])$ is
smooth. Furthermore, $\textrm{in}(\mathcal{L}_{Re,\varphi_c})=(1,1)$
and $\textrm{in}(\mathcal{L}_{Im,\varphi_c})=(0,1)$ for all $c\in
I$.

Now, in order to simplify the notation, we denote $L=L_0$. Consider $\mathcal{E}$ and
$\mathcal{F}$ the two conserved quantities in (\ref{gl.108.1}) and (\ref{gl.109.1}). Define
the function
\begin{eqnarray}\label{lkg.14.1}\begin{array}{rccl}
                        d: & \mathbb{R} & \rightarrow & \mathbb{R} \\
                           & c & \mapsto & \mathcal{E}(\varphi_c,c\varphi_c,0,0)-c\mathcal{F}(\varphi_c,c\varphi_c,0,0).
                       \end{array}\end{eqnarray}
Since $(\varphi_c,c\varphi_c,0,0)$ is a critical point of the
functional $\mathcal{G}=\mathcal{G}_c=\mathcal{E}-c\mathcal{F}$, we
deduce from (\ref{lkg.14.1})
\begin{eqnarray*}d'(c)=-\mathcal{F}(\varphi_c,c\varphi_c,0,0)=-\displaystyle\int_0^L c\varphi_c^2(x)\
dx.\end{eqnarray*} So for all $c\in I$,
\begin{eqnarray}\label{lkg.14.2}d''(c)=-\displaystyle\int_0^L\varphi_c^2\
dx-c\displaystyle\frac{d}{dc}\displaystyle\left(\displaystyle\int_0^L\varphi_c^2\
dx\right).
\end{eqnarray}

Our intention is to give a convenient expression for
(\ref{lkg.14.2}). In fact, recall that $\varphi_c>0$ and
\begin{eqnarray}\label{lkg.30}\varphi_c''-(1-c^2)\varphi_c+\log(\varphi_c^p)\varphi_c=0\ \ \textrm{for\ all}\
c\in I.\end{eqnarray} Since $c\in I\mapsto \varphi_c\in
H^2_{per,e}([0,L])$ is a smooth function, we can define
$\eta_c:=\displaystyle\frac{d}{dc}\displaystyle\left(\varphi_c\right)\in
H^2_{per}([0,L])$. Deriving equation (\ref{lkg.30}) with respect to
the parameter $c$ to obtain
\begin{eqnarray}\label{lkg.31}\eta_c''+2c\varphi_c-(1-c^2)\eta_c+\log(\varphi_c^p)\eta_c+p\eta_c=0.\end{eqnarray}

Multiplying equation (\ref{lkg.31}) by $\varphi_c$ and integrating
the result over $[0,L]$, we have
\begin{eqnarray*}\displaystyle\int_0^L \displaystyle\left[\eta_c''\varphi_c+2c\varphi_c^2-(1-c^2)\eta_c\varphi_c+\log(\varphi_c^p)\eta_c\varphi_c+p\varphi_c\eta_c\right]  \
dx=0.\end{eqnarray*}

Equation above is equivalent to
\begin{eqnarray}\label{lkg.32}\ \ \ \ \ \displaystyle\int_0^L
\displaystyle\left[\varphi_c''\eta_c+2c\varphi_c^2-(1-c^2)\eta_c\varphi_c+\log(\varphi_c^p)\eta_c\varphi_c+p\varphi_c\eta_c\right]
dx=0.\end{eqnarray} By combining expressions (\ref{lkg.30}) and
(\ref{lkg.32}), we see that
\begin{eqnarray*}\displaystyle\int_0^L\displaystyle\left[2c\varphi_c^2+p\varphi_c\eta_c\right]\
dx=0.\end{eqnarray*} Hence,
\begin{eqnarray}\label{lkg.33}2c\displaystyle\int_0^L\varphi_c^2\ dx+p\displaystyle\frac{d}{dc}\displaystyle\left(\displaystyle\int_0^L\displaystyle\frac{\varphi_c^2}{2}\ dx\right)=0.\end{eqnarray}

From identity (\ref{lkg.33}),
\begin{eqnarray*}\displaystyle\frac{d}{dc}\displaystyle\left(\displaystyle\int_0^L\varphi_c^2\ dx\right)=-\displaystyle\frac{4c}{p}\displaystyle\int_0^L\varphi_c^2\
dx.\end{eqnarray*} Therefore for all $c\in I$,
\begin{eqnarray*}d''(c)=-\displaystyle\int_0^L\varphi_c^2\ dx+\displaystyle\frac{4c^2}{p}\displaystyle\int_0^L\varphi_c^2\ dx
=\displaystyle\left(\displaystyle\frac{4c^2}{p}-1\right)\|\varphi_c\|^2_{L^2_{per}([0,L])}.\end{eqnarray*}

The sign of $d''(c)$ depends on the sign of the quantity $\displaystyle\frac{4c^2}{p}-1$, where $c\in I$.
Finally, we have that
\begin{eqnarray}\label{lkg.33.1}d''(c)>0\Leftrightarrow
|c|>\displaystyle\frac{\displaystyle\sqrt{p}}{2}\ \ \ \textrm{and}\
\ \ d''(c)<0\Leftrightarrow
|c|<\displaystyle\frac{\displaystyle\sqrt{p}}{2}.\end{eqnarray}

\subsection{Orbital Stability of Standing Waves}

 In this subsection, we prove results of orbital
stability of standing waves related to the Logarithmic Klein-Gordon equation. To do so, we use classical methods based on ideas established in
\cite{bona}, \cite{grillakis1} and \cite{weinstein1} to get the stability over the complex space
$X:=H^1_{per}([0,L])\times L^2_{per}([0,L])$.

% and $c_0\in \mathbb{R}$ such that $|c_0|>\frac{\sqrt{p}}{2}$.

In what follows, let us consider $p=1,2,3$ (the reason to
consider these values of $p$ will be explained later).
\begin{definition}\label{d.lkg.2.2}\textit{ We say that $\varphi_c$
is orbitally stable by the periodic flow of the equation
(\ref{KG1}), where $\varphi_c$ satisfies (\ref{travKG}) if for all $\epsilon>0$ there exists
$\delta>0$ such that if
\begin{eqnarray*}(u_0,u_1)\in X=H^1_{per}([0,L])\times
L^2_{per}([0,L])\ \textrm{satisfies}\
\|(u_0,u_1)-(\varphi,ic\varphi)\|_{X}<\delta\end{eqnarray*} then
$\vec{u}=(u,u_t)$ is a weak solution to equation (\ref{KG1}) with
$\vec{u}(\cdot,0)=(u_0,u_1)$ and
\begin{eqnarray*}
\displaystyle\sup_{t\geq
0}\displaystyle\inf_{\theta\in\mathbb{R},y\in\mathbb{R}}\|\vec{u}(\cdot,t)-e^{i\theta}(\varphi(\cdot+y),ic\varphi(\cdot+y))\|_{X}<\epsilon.\end{eqnarray*}
Otherwise, we say that $\varphi_c$ is orbitally unstable.}
\end{definition}

%Some authors require well-posedness of equation in the definition of
%orbital stability. But, well-posedness of Logarithmic Klein-Gordon
%equation (\ref{lkg.1}) is not known. However, well-posedness of
%equation (\ref{lkg.1}) is not a necessary condition to study the
%orbital stability of standing waves that solve (\ref{lkg.1}). So, in
%this approach, this condition is not require. It is sufficient the
%existence and uniqueness of weak solutions to the problem
%(\ref{gl.1}).

Firstly, we will need some additional information about spectral
proprieties of the operators $\mathcal{L}_{Re,\varphi_c}$ and
$\mathcal{L}_{Im,\varphi_c}$, in (\ref{lkg.13}) and (\ref{lkg.14})
respectively. Indeed, as we have already
determined in the last subsection one sees that
$\textrm{in}(\displaystyle\left.\mathcal{L}_{Re,\varphi_c}\right)=(1,1)$
and
$\textrm{in}(\displaystyle\left.\mathcal{L}_{Im,\varphi_c}\right)=(1,0)
$ for all $c\in I$. Since
$\textrm{in}(\displaystyle\left.\mathcal{L}_{Im,\varphi_c}\right)=(1,0)$,
we have that
\begin{eqnarray}\label{lkg.33.1.1}\ \ \ \ \displaystyle\left\langle \mathcal{L}_{Im,\varphi_c}\displaystyle\left(\begin{array}{c}
                                                                                                       \psi_1 \\
                                                                                                       \psi_2
                                                                                                     \end{array}\right),\displaystyle\left(\begin{array}{c}
                                                                                                       \psi_1 \\
                                                                                                       \psi_2
                                                                                                     \end{array}
\right)\right\rangle_{2,2}\geq 0\ \textrm{for\ all\ }
\displaystyle\left(\begin{array}{c}
                                                                                                       \psi_1 \\
                                                                                                       \psi_2
                                                                                                     \end{array}
\right)\in X,\end{eqnarray} where $\langle\cdot,\cdot\rangle_{2,2}$
denotes the inner product in $L^2_{per}([0,L])\times
L^2_{per}([0,L])$ and $\|\cdot\|_{2,2}$ corresponds to the respective
induced norm. We have the following result:

\begin{proposition}\label{p.lkg.4} If \begin{eqnarray*}\beta&:=&\inf\displaystyle\left\{\displaystyle\left\langle \mathcal{L}_{Im,\varphi_c}\displaystyle\left(\begin{array}{c}
                                                                                                       \psi_1 \\
                                                                                                       \psi_2
                                                                                                     \end{array}
\right),\displaystyle\left(\begin{array}{c}
                                                                                                       \psi_1 \\
                                                                                                       \psi_2
                                                                                                     \end{array}
\right)\right\rangle_{2,2};\ \displaystyle\left(\begin{array}{c}
                                                                                                       \psi_1 \\
                                                                                                       \psi_2
                                                                                                     \end{array}
\right)\in X,\right.\nonumber\\
\\
&&\ \ \ \ \
\displaystyle\left.\displaystyle\left\|\displaystyle\left(\begin{array}{c}
                                                                                                       \psi_1 \\
                                                                                                       \psi_2
                                                                                                     \end{array}
\right)\right\|_{2,2}=1,\ \displaystyle\left\langle
\displaystyle\left(\begin{array}{c}
                                                                                                       \psi_1 \\
                                                                                                       \psi_2
                                                                                                     \end{array}
\right),\displaystyle\left(\begin{array}{c}
                                                                                                       \varphi_c\log(\varphi_c^p) \\
                                                                                                       -c\varphi_c
                                                                                                     \end{array}
\right)\right\rangle_{2,2}=0\right\}
\end{eqnarray*} then $\beta>0$.
\end{proposition}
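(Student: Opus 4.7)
The plan is to exploit the spectral structure of $\mathcal{L}_{Im,\varphi_c}$ established in Section 4.2: the operator is non-negative and has $0$ as a simple eigenvalue with eigenspace spanned by $\Phi_0:=(\varphi_c,-c\varphi_c)$, while the remainder of the spectrum is discrete and bounded away from zero. Hence there is a spectral gap $\lambda_1>0$ such that
\begin{equation*}
\left\langle\mathcal{L}_{Im,\varphi_c}\Psi^{\perp},\Psi^{\perp}\right\rangle_{2,2}\ge\lambda_1\|\Psi^{\perp}\|_{2,2}^{2}
\end{equation*}
for every $\Psi^{\perp}\in X$ which is $L^2\times L^2$-orthogonal to $\Phi_0$. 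This spectral gap is the quantitative substitute for positivity of $\mathcal{L}_{Im,\varphi_c}$ modulo its kernel.

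The crucial algebraic step is to show the constraint vector $\xi:=(\varphi_c\log(\varphi_c^p),-c\varphi_c)$ is \emph{not} $L^2\times L^2$-orthogonal to the kernel direction $\Phi_0$. A direct computation yields
\begin{equation*}
\left\langle\xi,\Phi_0\right\rangle_{2,2}=\int_0^L\varphi_c^{2}\log(\varphi_c^{p})\,dx+c^{2}\int_0^L\varphi_c^{2}\,dx.
\end{equation*}
Multiplying the ODE (\ref{travKG}) by $\varphi_c$ and integrating by parts gives $\int_0^L\varphi_c^2\log(\varphi_c^p)\,dx=\int_0^L(\varphi_c')^2\,dx+(1-c^2)\int_0^L\varphi_c^2\,dx$, so
\begin{equation*}
\left\langle\xi,\Phi_0\right\rangle_{2,2}=\int_0^L(\varphi_c')^{2}\,dx+\int_0^L\varphi_c^{2}\,dx=\|\varphi_c\|_{H^{1}_{per}}^{2}>0.
\end{equation*}
Set $\kappa:=\langle\xi,\Phi_0\rangle_{2,2}/\|\Phi_0\|_{2,2}>0$.

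With these two ingredients in hand, the proof is completed by a standard dichotomy argument. Given $\Psi\in X$ with $\|\Psi\|_{2,2}=1$ and $\langle\Psi,\xi\rangle_{2,2}=0$, decompose $\Psi=a\widetilde{\Phi}_0+\Psi^{\perp}$ where $\widetilde{\Phi}_0=\Phi_0/\|\Phi_0\|_{2,2}$ and $\Psi^{\perp}\perp\widetilde{\Phi}_0$ in $L^2\times L^2$. The spectral gap gives $\langle\mathcal{L}_{Im,\varphi_c}\Psi,\Psi\rangle_{2,2}\ge\lambda_1(1-a^2)$, while the constraint reads $a\kappa=-\langle\Psi^{\perp},\xi\rangle_{2,2}$, so by Cauchy--Schwarz $|a|\le\|\xi\|_{2,2}\|\Psi^{\perp}\|_{2,2}/\kappa$. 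If $|a|\le 1/2$ then $\|\Psi^{\perp}\|_{2,2}^2\ge 3/4$ and $\langle\mathcal{L}_{Im,\varphi_c}\Psi,\Psi\rangle_{2,2}\ge 3\lambda_1/4$; otherwise $\|\Psi^{\perp}\|_{2,2}^2\ge\kappa^2/(4\|\xi\|_{2,2}^2)$, yielding $\langle\mathcal{L}_{Im,\varphi_c}\Psi,\Psi\rangle_{2,2}\ge\lambda_1\kappa^2/(4\|\xi\|_{2,2}^2)$. Taking the minimum of the two lower bounds gives an explicit $\beta>0$.

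The main subtle point I foresee is making precise the spectral gap step in $X=H^1_{per}\times L^2_{per}$: since $\mathcal{L}_{Im,\varphi_c}$ mixes a differential and a multiplicative component, some care is needed to ensure that the second positive eigenvalue obtained via the reduction in Proposition \ref{p.lkg.1} (and Remark \ref{p.lkg.2}) to the Hill operator $\mathcal{L}_{2,\varphi_c}$ in (\ref{operator123}) translates into a genuine quadratic-form gap on the whole product space. A clean alternative, if one prefers, is a contradiction argument: a minimizing sequence with $\beta_n\to 0$ admits, by the non-negativity of $\mathcal{L}_{Im,\varphi_c}$ and weak compactness, a weak limit in the kernel, which by $\langle\cdot,\xi\rangle_{2,2}=0$ and the non-orthogonality computed above must vanish; one then bootstraps the missing mass into $\Psi^{\perp}$ to contradict $\beta_n\to 0$ via the spectral gap.
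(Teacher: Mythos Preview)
Your argument is correct and takes a genuinely different route from the paper. The paper proceeds by contradiction: assuming $\beta=0$, it extracts a minimizing sequence, uses the compact embedding $H^1_{per}\hookrightarrow L^2_{per}$ together with the quadratic form to show the weak limit is nonzero and attains the infimum, and then applies Lagrange multipliers to conclude that the minimizer must lie in $\ker(\mathcal{L}_{Im,\varphi_c})=\mathrm{span}\{\Phi_0\}$; this contradicts the constraint precisely because $\langle\xi,\Phi_0\rangle_{2,2}>0$, which is the same non-orthogonality you compute (the paper stops at noting $\int_0^L\varphi_c(\varphi_c\log(\varphi_c^p)+c^2\varphi_c)\,dx>0$, while your identification of this quantity with $\|\varphi_c\|_{H^1_{per}}^2$ is a nice refinement). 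By contrast, your primary argument is direct: you invoke the spectral gap $\lambda_1>0$ on $\Phi_0^\perp$ established in Section~\ref{s.spect.a} and combine it with the non-orthogonality $\langle\xi,\Phi_0\rangle_{2,2}>0$ via a decomposition $\Psi=a\widetilde{\Phi}_0+\Psi^\perp$ and a dichotomy on $|a|$, obtaining an explicit lower bound for $\beta$. Your approach is shorter and yields a quantitative estimate; the paper's approach is the classical Weinstein-type variational argument and has the mild advantage that it does not need to state the quadratic-form gap on $\Phi_0^\perp$ as a separate ingredient, since the Lagrange multiplier step only uses that the kernel is one-dimensional. The alternative contradiction sketch you outline at the end is essentially the paper's proof.
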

\begin{proof} First, from (\ref{lkg.33.1.1}) we obtain $\beta\geq 0$. Let us suppose that $\beta=0$. There exists a sequence
\begin{eqnarray}\label{lkg.33.2}\displaystyle\left\{\displaystyle\left(\begin{array}{c}
                                           \psi_{1,j} \\
                                           \psi_{2,j}
                                         \end{array}
\right)\right\}_{j\in\mathbb{N}}\subset X\end{eqnarray} such that
\begin{eqnarray}\label{lkg.33.3}\displaystyle\left\langle \mathcal{L}_{Im,\varphi_c}\displaystyle\left(\begin{array}{c}
                                                                                                       \psi_{1,j} \\
                                                                                                       \psi_{2,j}
                                                                                                     \end{array}
\right),\displaystyle\left(\begin{array}{c}
                                                                                                       \psi_{1,j} \\
                                                                                                       \psi_{2,j}
                                                                                                     \end{array}
\right)\right\rangle_{2,2}\stackrel{j\rightarrow\infty}{\longrightarrow}0\end{eqnarray}
and for all $j\in\mathbb{N}$, one has
\begin{equation}\label{cond12345}\displaystyle\left\| \displaystyle\left(\begin{array}{c}
                                           \psi_{1,j} \\
                                           \psi_{2,j}
                                         \end{array}
\right) \right\|_{2,2}=1\ \textrm{and}\ \displaystyle\left\langle
\displaystyle\left(\begin{array}{c}
                                                                                                       \psi_{1,j} \\
                                                                                                       \psi_{2,j}
                                                                                                     \end{array}
\right),\displaystyle\left(\begin{array}{c}
                                                                                                       \varphi_c\log(\varphi_c^p) \\
                                                                                                       -c\varphi_c
                                                                                                     \end{array}
\right)\right\rangle_{2,2}=0.
\end{equation}

\indent Now, using the convergence in $(\ref{lkg.33.3})$, we see that $\displaystyle\left\{\displaystyle\left(\begin{array}{c}
                                                                                                       \psi_{1,j} \\
                                                                                                       \psi_{2,j}
                                                                                                     \end{array}
\right)\right\}_{j\in\mathbb{N}}$ is uniformly bounded in $X$. There exists a subsequence of
(\ref{lkg.33.2}), still denoted by (\ref{lkg.33.2}) and
$\displaystyle\left(\begin{array}{c}
                                         \psi_1^{*} \\
                                         \psi_2^{*}
                                       \end{array}
\right)\in X$ such that
\begin{equation}\label{wconverg}\displaystyle\left(\begin{array}{c}
                                           \psi_{1,j} \\
                                           \psi_{2,j}
                                         \end{array}
\right)\rightharpoonup\displaystyle\left(\begin{array}{c}
                                         \psi_1^{*} \\
                                         \psi_2^{*}
                                       \end{array}
\right)\ \ \textrm{weakly\ in}\ \  X.\end{equation}

\indent On the other hand, since the embedding $H^1_{per}([0,L])\hookrightarrow L^2_{per}([0,L])$ is compact and $\{\psi_{1,j}\}$ is uniformly bounded in $H_{per}^1([0,L])$,
we deduce, up to a subsequence that
\begin{equation}\label{conver1}
\psi_{1,j}\rightarrow \psi_1^{*}\ \ \mbox{in}\ L_{per}^2([0,L]).
\end{equation}
The weak convergence in $(\ref{wconverg})$ and the strong convergence in $(\ref{conver1})$ give us that second condition in $(\ref{cond12345})$ is satisfied for
$\left(\begin{array}{c}
\psi_1^{*} \\
\psi_2^{*}
\end{array}
\right)$.

From $(\ref{lkg.33.3})$, we see that for all $\varepsilon>0$, there exists $j_0\in\mathbb{N}$ such that if $j\geq j_0$, we have
\begin{equation}\label{conver2}
\left|\int_{0}^L(\psi_{1,j}')^2+\psi_{1,j}^2-\log(|\varphi_c|^p)\psi_{1,j}^2+2c\psi_{1,j}\psi_{2,j}+\psi_{2,j}^2dx\right|<\varepsilon.
\end{equation}
that is, if $j\geq j_0$, we obtain from the fact $\displaystyle\left\| \displaystyle\left(\begin{array}{c}
                                           \psi_{1,j} \\
                                           \psi_{2,j}
                                         \end{array}
\right) \right\|_{2,2}=1$ that

\begin{equation}\label{conver3}
1\leq \int_{0}^L(\psi_{1,j}')^2+\psi_{1,j}^2+\psi_{2,j}^2dx<\int_{0}^L\log(|\varphi_c|^p)\psi_{1,j}^2-2c\psi_{1,j}\psi_{2,j}dx+\varepsilon.
\end{equation}
Since in particular $1<\displaystyle\int_{0}^L\log(|\varphi_c|^p)\psi_{1,j}^2-2c\psi_{1,j}\psi_{2,j}dx+\varepsilon$, we obtain from the fact $\psi_{2,j}\rightharpoonup \psi_2^{*}$ weakly in $L_{per}^2([0,L])$ and $(\ref{conver1})$ that

\begin{equation}\label{conver4}
1\leq\int_{0}^L\log(|\varphi_c|^p){\psi_{1}^{*}}^2-2c\psi_{1}^{*}\psi_{2}^{*}dx+\varepsilon.
\end{equation}
From $(\ref{conver4})$, we conclude that $\displaystyle\left(\begin{array}{c}
                                         \psi_1^{*} \\
                                         \psi_2^{*}
                                       \end{array}
\right)\neq\vec{0}$.
Consider $\displaystyle\left(\begin{array}{c}
                                           \psi_{1} \\
                                           \psi_{2}
                                         \end{array}
\right) =\frac{1}{\left\|\displaystyle\left(\begin{array}{c}
                                           \psi_{1}^{*} \\
                                           \psi_{2}^{*}
                                         \end{array}
\right)\right\|_{2,2} }\displaystyle\left(\begin{array}{c}
                                           \psi_{1}^{*}\\
                                           \psi_{2}^{*}
                                         \end{array}
\right) $. We obtain
\begin{eqnarray}\label{lkg.50.5}\displaystyle\left\| \displaystyle\left(\begin{array}{c}
                                           \psi_{1} \\
                                           \psi_{2}
                                         \end{array}
\right) \right\|_{2,2}=1\ \textrm{and}\ \displaystyle\left\langle
\displaystyle\left(\begin{array}{c}
                                                                                                       \psi_{1} \\
                                                                                                       \psi_{2}
                                                                                                     \end{array}
\right),\displaystyle\left(\begin{array}{c}
                                                                                                       \varphi_c\log(\varphi_c^p) \\
                                                                                                       -c\varphi_c
                                                                                                     \end{array}
\right)\right\rangle_{2,2}=0.\end{eqnarray}

In addition, Fatou's Lemma gives us
\begin{eqnarray*}0\leq\displaystyle\left\langle
\mathcal{L}_{Im,\varphi_c}\displaystyle\left(\begin{array}{c}
                                                                                                       \psi_{1} \\
                                                                                                       \psi_{2}
                                                                                                     \end{array}
\right),\displaystyle\left(\begin{array}{c}
                                                                                                       \psi_{1} \\
                                                                                                       \psi_{2}
                                                                                                     \end{array}
\right)\right\rangle_{2,2}\leq\frac{1}{\left\|\displaystyle\left(\begin{array}{c}
			\psi_{1}^{*} \\
			\psi_{2}^{*}
		\end{array}
		\right)\right\|_{2,2}^2}\displaystyle\liminf_{j\rightarrow
\infty}\displaystyle\left\langle
\mathcal{L}_{Im,\varphi_c}\displaystyle\left(\begin{array}{c}
                                                                                                       \psi_{1,j} \\
                                                                                                       \psi_{2,j}
                                                                                                     \end{array}
\right),\displaystyle\left(\begin{array}{c}
                                                                                                       \psi_{1,j} \\
                                                                                                       \psi_{2,j}
                                                                                                     \end{array}
\right)\right\rangle_{2,2}=0.
\end{eqnarray*} \indent Therefore, we obtain $\displaystyle\left\langle
\mathcal{L}_{Im,\varphi_c}\displaystyle\left(\begin{array}{c}
                                                                                                       \psi_{1} \\
                                                                                                       \psi_{2}
                                                                                                     \end{array}
\right),\displaystyle\left(\begin{array}{c}
                                                                                                       \psi_{1} \\
                                                                                                       \psi_{2}
                                                                                                     \end{array}
\right)\right\rangle_{2,2}=0$ and as consequence, the minimum $\beta$ is attained in
$\displaystyle\left(\begin{array}{c}
                                         \psi_1 \\
                                         \psi_2
                                       \end{array}
\right)\neq \vec{0}$.\\ 
\indent We are in position to use Lagrange's Theorem to guarantee the existence of $(a,b)\in\mathbb{R}^2$ such
that
\begin{eqnarray}\label{lkg.51}\mathcal{L}_{Im,\varphi_c}\displaystyle\left(\begin{array}{c}
                                                                                                       \psi_{1} \\
                                                                                                       \psi_{2}
                                                                                                     \end{array}
\right)=a\displaystyle\left(\begin{array}{c}
                                                                                                       \psi_{1} \\
                                                                                                       \psi_{2}
                                                                                                     \end{array}
\right)+b\displaystyle\left(\begin{array}{c}
                                                                                                       \varphi_c\log(\varphi_c^p) \\
                                                                                                       -c\varphi_c
                                                                                                     \end{array}
\right).\end{eqnarray} \indent Taking the inner product of (\ref{lkg.51})
with the function $\left(
                 \begin{array}{c}
                   \psi_1 \\
                   \psi_2 \\
                 \end{array}
               \right),
$ we get from (\ref{lkg.50.5}) that $a=0$. \\
\indent On the other hand, since
$\ker(\mathcal{L}_{Im,\varphi_c})=\textrm{span}\displaystyle\left\{\displaystyle\left(\begin{array}{c}
                                                                                                \varphi_c \\
                                                                                                -c\varphi_c
                                                                                              \end{array}
\right)\right\}$ and $\mathcal{L}_{Im,\varphi_c}$ is a self-adjoint
operator, we have the identity
\begin{eqnarray*}0=\displaystyle\left\langle\displaystyle\left(\begin{array}{c}
                                                                 \psi_1 \\
                                                                 \psi_2
                                                               \end{array}
\right),\mathcal{L}_{Im,\varphi_c}\displaystyle\left(\begin{array}{c}
                                                                                                \varphi_c \\
                                                                                                -c\varphi_c
                                                                                              \end{array}
\right)\right\rangle_{2,2}=b\displaystyle\left\langle\displaystyle\left(\begin{array}{c}
                                                                 \varphi_c\log(\varphi_c^p) \\
                                                                 -c\varphi_c
                                                               \end{array}
\right),\displaystyle\left(\begin{array}{c}
                                                                                                \varphi_c \\
                                                                                                -c\varphi_c
                                                                                              \end{array}
\right)\right\rangle_{2,2}.
\end{eqnarray*}

Identity (\ref{travKG}) allows us to deduce
\begin{eqnarray*}\displaystyle\left\langle\displaystyle\left(\begin{array}{c}
                                                                 \varphi_c\log(\varphi_c^p) \\
                                                                 -c\varphi_c
                                                               \end{array}
\right),\displaystyle\left(\begin{array}{c}
                                                                                                \varphi_c \\
                                                                                                -c\varphi_c
                                                                                              \end{array}
\right)\right\rangle_{2,2}=\displaystyle\int_0^L\varphi_c\displaystyle\left(\varphi_c\log(\varphi_c^p)+c^2\varphi_c\right)\
dx>0
\end{eqnarray*} and so, we conclude that $a=b=0$. There exists a
constant $c_1\neq 0$ such that
\begin{eqnarray*}\displaystyle\left(\begin{array}{c}
                                                                 \psi_1 \\
                                                                 \psi_2
                                                               \end{array}
\right)=c_1\displaystyle\left(\begin{array}{c}
                                                                 \varphi_c \\
                                                                 -c\varphi_c
                                                               \end{array}
\right).\end{eqnarray*} Hence,
\begin{eqnarray*}\displaystyle\left\langle\displaystyle\left(\begin{array}{c}
                                                                                                \psi_1 \\
                                                                                                \psi_2
                                                                                              \end{array}
\right),\displaystyle\left(\begin{array}{c}
                                                                 \varphi_c\log(\varphi_c^p) \\
                                                                 -c\varphi_c
                                                               \end{array}
\right)\right\rangle_{2,2}\neq 0,
\end{eqnarray*} which is a contradiction with (\ref{lkg.50.5}).
Therefore, $\beta>0$.
\end{proof}

In the next proposition we need to use Lemma 3.1 in
\cite{weinstein1}. In whole this subsection, we shall consider
$|c|>\displaystyle\frac{\sqrt{p}}{2}$.

\begin{proposition}\label{p.lkg.5}
\begin{itemize}
\item[(i)] If
\begin{eqnarray*}\gamma&:=&
\inf\displaystyle\left\{\displaystyle\left\langle
\mathcal{L}_{Re,\varphi_c}\displaystyle\left(\begin{array}{c}
                                                                                                       \psi_1 \\
                                                                                                       \psi_2
                                                                                                     \end{array}
\right),\displaystyle\left(\begin{array}{c}
                                                                                                       \psi_1 \\
                                                                                                       \psi_2
                                                                                                     \end{array}
\right)\right\rangle_{2,2};\ \displaystyle\left(\begin{array}{c}
                                                                                                       \psi_1 \\
                                                                                                       \psi_2
                                                                                                     \end{array}
\right)\in X,\right.\nonumber\\
&&\ \ \ \ \
\displaystyle\left.\displaystyle\left\|\displaystyle\left(\begin{array}{c}
                                                                                                       \psi_1 \\
                                                                                                       \psi_2
                                                                                                     \end{array}
\right)\right\|_{2,2}=1,\ \displaystyle\left\langle
\displaystyle\left(\begin{array}{c}
                                                                                                       \psi_1 \\
                                                                                                       \psi_2
                                                                                                     \end{array}
\right),\displaystyle\left(\begin{array}{c}
                                                                                                       c\varphi_c \\
                                                                                                       \varphi_c
                                                                                                     \end{array}
\right)\right\rangle_{2,2}=0\right\},
\end{eqnarray*} then $\gamma=0$.
\item[(ii)] If
\begin{eqnarray*}\kappa&:=&\inf\displaystyle\left\{\displaystyle\left\langle \mathcal{L}_{Re,\varphi_c}\displaystyle\left(\begin{array}{c}
                                                                                                       \psi_1 \\
                                                                                                       \psi_2
                                                                                                     \end{array}
\right),\displaystyle\left(\begin{array}{c}
                                                                                                       \psi_1 \\
                                                                                                       \psi_2
                                                                                                     \end{array}
\right)\right\rangle_{2,2};\ \displaystyle\left(\begin{array}{c}
                                                                                                       \psi_1 \\
                                                                                                       \psi_2
                                                                                                     \end{array}
\right)\in X,\
\displaystyle\left\|\displaystyle\left(\begin{array}{c}
                                                                                                       \psi_1 \\
                                                                                                       \psi_2
                                                                                                     \end{array}
\right)\right\|_{2,2}=1,\right.\nonumber\\
\\
&&\ \ \ \ \ \displaystyle\left. \displaystyle\left\langle
\displaystyle\left(\begin{array}{c}
                                                                                                       \psi_1 \\
                                                                                                       \psi_2
                                                                                                     \end{array}
\right),\displaystyle\left(\begin{array}{c}
                                                                                                       c\varphi_c \\
                                                                                                       \varphi_c
                                                                                                     \end{array}
\right)\right\rangle_{2,2}=0,\ \displaystyle\left\langle
\displaystyle\left(\begin{array}{c}
                                                                                                       \psi_1 \\
                                                                                                       \psi_2
                                                                                                     \end{array}
\right),\displaystyle\left(\begin{array}{c}
                                                                                                       \log(\varphi_c^p)\varphi_c'+p\varphi_c' \\
                                                                                                       c\varphi_c'
                                                                                                     \end{array}
\right)\right\rangle_{2,2}=0\right\},\\
\end{eqnarray*} then $\kappa>0$.
\end{itemize}
\end{proposition}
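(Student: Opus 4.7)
The plan is to reduce both parts to the spectral picture established in Section~4.2 for $\mathcal{L}_{Re,\varphi_c}$ (one simple negative eigenvalue, one-dimensional kernel spanned by $\chi_0:=(\varphi_c',c\varphi_c')$, and the rest of the spectrum bounded away from $0$), combined with the identity $\langle\mathcal{L}_{Re,\varphi_c}^{-1}\eta,\eta\rangle_{2,2}=-d''(c)$ for $\eta:=(c\varphi_c,\varphi_c)$ derived in Section~4.3, and the sign $d''(c)>0$ available from $(\ref{lkg.33.1})$ under the hypothesis $|c|>\sqrt{p}/2$.

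For (i), the upper bound $\gamma\leq 0$ follows from the test vector $\chi_0/\|\chi_0\|_{2,2}$: it lies in the constraint set because $\langle\chi_0,\eta\rangle_{2,2}=2c\int_0^L\varphi_c\varphi_c'\,dx=0$ by periodicity, and $\langle\mathcal{L}_{Re,\varphi_c}\chi_0,\chi_0\rangle_{2,2}=0$. For the matching lower bound $\gamma\geq 0$ I would invoke Weinstein's Lemma~3.1 of \cite{weinstein1} on the invariant subspace $\{\chi_0\}^{\perp}$, where $\mathcal{L}_{Re,\varphi_c}$ has exactly one simple negative eigenvalue and trivial kernel: since $\eta\in\{\chi_0\}^{\perp}$ and $\langle\mathcal{L}_{Re,\varphi_c}^{-1}\eta,\eta\rangle_{2,2}<0$, the quadratic form is non-negative on $\{\eta\}^{\perp}$.

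For (ii) the plan is to argue by contradiction, assuming $\kappa=0$. The compactness scheme from Proposition~$\ref{p.lkg.4}$ (minimizing sequence, weak limit in $X$, Rellich compactness of $H^1_{per}\hookrightarrow L^2_{per}$ to pass to the limit in the $\log(|\varphi_c|^p)$ term, Fatou on the $H^1$ quadratic part) should yield a normalized minimizer $\Psi^*\in X$ obeying both orthogonality conditions with $\langle\mathcal{L}_{Re,\varphi_c}\Psi^*,\Psi^*\rangle_{2,2}=0$. Since $\kappa=\gamma=0$, this $\Psi^*$ is simultaneously extremal for (i), so Lagrange multipliers for the two problems give $\mathcal{L}_{Re,\varphi_c}\Psi^*=b'\eta$ and $\mathcal{L}_{Re,\varphi_c}\Psi^*=b\eta+d\xi$ with $\xi:=(\log(\varphi_c^p)\varphi_c'+p\varphi_c',c\varphi_c')$ (the $\Psi^*$-multipliers vanish because the form annihilates $\Psi^*$). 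Comparing, and using that $\eta$ and $\xi$ are linearly independent (second components have opposite parity: $\varphi_c$ is even while $c\varphi_c'$ is odd), forces $d=0$.

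Hence $\mathcal{L}_{Re,\varphi_c}\Psi^*=b\eta$, so $\Psi^*=b\chi+s\chi_0$, where $\chi=(\eta_c,\varphi_c+c\eta_c)$ with $\eta_c:=d\varphi_c/dc$ is the particular solution of $\mathcal{L}_{Re,\varphi_c}\chi=\eta$ verified in Section~4.3. The constraint $\langle\Psi^*,\eta\rangle_{2,2}=0$ reduces to $-b\,d''(c)=0$, so $b=0$ and $\Psi^*=s\chi_0$; the remaining constraint then reads $s\langle\chi_0,\xi\rangle_{2,2}=0$. The crux is the strict positivity
$$\langle\chi_0,\xi\rangle_{2,2}=\int_0^L\bigl[(\varphi_c'')^2+(\varphi_c')^2\bigr]\,dx>0,$$
which I would establish by substituting the $x$-derivative of $(\ref{travKG})$, namely $\log(\varphi_c^p)\varphi_c'+p\varphi_c'=-\varphi_c'''+(1-c^2)\varphi_c'$, into the first component, integrating by parts, and noting that the $c^2(\varphi_c')^2$ contribution from the second component cancels the $-c^2(\varphi_c')^2$ from the substitution. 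This forces $s=0$, contradicting $\|\Psi^*\|_{2,2}=1$. The main obstacle I anticipate is the compactness/attainment step in the two-constraint setting (ensuring the weak limit is nontrivial, not lost to oscillation), while the remaining steps are a linear-algebraic unpacking of the Lagrange conditions together with the integration-by-parts identity above.
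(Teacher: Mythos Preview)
Your proposal is correct and lands on the same contradiction as the paper, but two of your intermediate moves differ from the authors'.

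For part (i), you quote Weinstein's Lemma~3.1 directly on $\{\chi_0\}^\perp$. The paper instead reproves that result in this setting: it extracts a minimizer $(\psi_1,\psi_2)$ via the compactness scheme of Proposition~\ref{p.lkg.4}, writes the Lagrange equation $\mathcal{L}_{Re,\varphi_c}(\psi_1,\psi_2)=a_*(\psi_1,\psi_2)+b\,\eta$ with $a_*=\gamma$, and then rules out $a_*\in[\lambda_0,0)$ by the resolvent-monotonicity device $G(a):=\langle(\mathcal{L}_{Re,\varphi_c}-aI)^{-1}\eta,\eta\rangle$, using $G'(a)>0$ and $G(0)=-d''(c)<0$. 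Your shortcut is cleaner; the paper's argument is self-contained.

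For part (ii), the overall architecture (assume $\kappa=0$, extract a minimizer, Lagrange multipliers, reduce to $\Psi^*=s\chi_0$, compute $\langle\chi_0,\xi\rangle=\int_0^L[(\varphi_c'')^2+(\varphi_c')^2]\,dx>0$) is exactly the paper's. The one difference is how you kill the $\xi$-multiplier: you compare the Euler--Lagrange equations for problems (i) and (ii) and use linear independence of $\eta$ and $\xi$; the paper instead pairs the single Lagrange equation $\mathcal{L}_{Re,\varphi_c}\Psi^*=b_2\eta+b_3\xi$ with $\chi_0$, noting $\langle\mathcal{L}_{Re,\varphi_c}\Psi^*,\chi_0\rangle=0$ and $\langle\eta,\chi_0\rangle=0$, so $b_3\langle\xi,\chi_0\rangle=0$ and hence $b_3=0$. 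Their route is one line shorter and avoids appealing back to (i), but both work. Your anticipated obstacle---nontriviality of the weak limit---is handled precisely as in Proposition~\ref{p.lkg.4}: strong $L^2$-convergence of $\psi_{1,j}$ forces the limit of the ``bad'' terms to vanish, while $\|\Psi_j\|_{2,2}=1$ prevents collapse.
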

\begin{proof}
($i$). The fact that function $\varphi_c$ is bounded gives us that
$\gamma$ is finite. Now, since
\begin{eqnarray*}\displaystyle\left\langle\displaystyle\left(\begin{array}{c}
                                                               \varphi_c' \\
                                                               c\varphi_c'
                                                             \end{array}
\right),\left(\begin{array}{c}
                                                               c\varphi_c \\
                                                               \varphi_c
                                                             \end{array}
\right)\right\rangle_{2,2}=0\ \ \ \textrm{and}\ \ \
\mathcal{L}_{Re,\varphi_c}\displaystyle\left(\begin{array}{c}
                                              \varphi_c' \\
                                              c\varphi_c'
                                            \end{array}
\right)=\vec{0},\end{eqnarray*} it follows that $\gamma\leq 0$.

There exists a sequence $\displaystyle\left\{\displaystyle\left(\begin{array}{c}
                                           \psi_{1,j} \\
                                           \psi_{2,j}
                                         \end{array}
\right)\right\}_{j\in\mathbb{N}}\subset X$ such that
\begin{eqnarray*}\displaystyle\left\|\displaystyle\left(\begin{array}{c}
                                                                                                       \psi_{1,j} \\
                                                                                                       \psi_{2,j}
                                                                                                     \end{array}
\right)\right\|_{2,2}=1\ \ \textrm{and}\ \ \  \displaystyle\left\langle
\displaystyle\left(\begin{array}{c}
                                                                                                       \psi_{1,j} \\
                                                                                                       \psi_{2,j}
                                                                                                     \end{array}
\right),\displaystyle\left(\begin{array}{c}
                                                                                                       c\varphi_c \\
                                                                                                       \varphi_c
                                                                                                     \end{array}
\right)\right\rangle_{2,2}=0, \ \  \textrm{for\ all\ } j\in\mathbb{N}.\end{eqnarray*} Moreover, \begin{eqnarray}\label{Conver}\displaystyle\left\langle
\mathcal{L}_{Re,\varphi_c}\displaystyle\left(\begin{array}{c}
                                                                                                       \psi_{1,j} \\
                                                                                                       \psi_{2,j}
                                                                                                     \end{array}
\right),\displaystyle\left(\begin{array}{c}
                                                                                                       \psi_{1,j} \\
                                                                                                       \psi_{2,j}
                                                                                                     \end{array}
\right)\right\rangle_{2,2}\stackrel{j\rightarrow\infty}{\longrightarrow}\gamma.\end{eqnarray}

\indent Using a similar analysis as determined in Proposition
\ref{p.lkg.4}, we guarantee the existence of 
$\displaystyle\left(\begin{array}{c}
                                         \psi_1 \\
                                         \psi_2
                                       \end{array}
\right)$ such that \begin{eqnarray}\label{lkg.50.5.1}\displaystyle\left\| \displaystyle\left(\begin{array}{c}
                                           \psi_{1} \\
                                           \psi_{2}
                                         \end{array}
\right) \right\|_{2,2}=1,\ \ \displaystyle\left\langle
\displaystyle\left(\begin{array}{c}
                                                                                                       \psi_{1} \\
                                                                                                       \psi_{2}
                                                                                                     \end{array}
\right),\displaystyle\left(\begin{array}{c}
                                                                                                       c\varphi_c \\
                                                                                                       \varphi_c
                                                                                                     \end{array}
\right)\right\rangle_{2,2}=0\end{eqnarray} and \begin{eqnarray}\label{lkg.50.5.2}\displaystyle\left\langle
\mathcal{L}_{Re,\varphi_c}\displaystyle\left(\begin{array}{c}
                                                                                                       \psi_{1} \\
                                                                                                       \psi_{2}
                                                                                                     \end{array}
\right),\displaystyle\left(\begin{array}{c}
                                                                                                       \psi_{1} \\
                                                                                                       \psi_{2}
                                                                                                     \end{array}
\right)\right\rangle_{2,2}=\gamma.
\end{eqnarray} 

 Next, we apply the
method of Lagrange multipliers in order to guarantee the existence
of $(a_{\ast},b)\in\mathbb{R}^2$ such that
\begin{eqnarray}\label{lkg.54}\mathcal{L}_{Re,\varphi_c}\displaystyle\left(\begin{array}{c}
                                                                                                       \psi_{1} \\
                                                                                                       \psi_{2}
                                                                                                     \end{array}
\right)=a_{\ast}\displaystyle\left(\begin{array}{c}
                                                                                                       \psi_{1} \\
                                                                                                       \psi_{2}
                                                                                                     \end{array}
\right)+b\displaystyle\left(\begin{array}{c}
                                                                                                       c\varphi_c \\
                                                                                                       \varphi_c
                                                                                                     \end{array}
\right).\end{eqnarray} \indent Before analyzing identity (\ref{lkg.54}), we
will deduce the existence of $\displaystyle\left(\begin{array}{c}
                      M \\
                      N
                    \end{array}
\right)\in X$ such that
\begin{eqnarray}\label{lkg.54.1}\mathcal{L}_{Re,\varphi_c}\displaystyle\left(\begin{array}{c}
                                               M \\
                                               N
                                             \end{array}
\right)=\displaystyle\left(\begin{array}{c}
                                               c\varphi_c \\
                                               \varphi_c
                                             \end{array}
\right).\end{eqnarray} Indeed, the identity (\ref{lkg.54.1}) is
equivalent to
\begin{eqnarray*}\displaystyle\left(\begin{array}{c}
                                      -M''+(1-p)M-\log(\varphi_c^p)M-cN \\
                                      -cM+N
                                    \end{array}
\right)=\displaystyle\left(\begin{array}{c}
                                      c\varphi_c \\
                                      \varphi_c
                                    \end{array}
\right).
\end{eqnarray*}

Thus, $-cN=-c^2M-c\varphi_c$ and
$$\mathcal{L}_{1,\varphi_c}(M)=-M''+(1-c^2)M-pM-\log(\varphi_c^p)M=2c\varphi_c.$$
Since $\ker(\mathcal{L}_{1,\varphi_c})=\textrm{span}\{\varphi_c'\}$
and $\varphi_c\perp\varphi_c'$, we have that
$M=2c\mathcal{L}_{1,\varphi_c}^{-1}(\varphi_c)$. On the other hand,
by deriving equation (\ref{travKG}) with respect to $c$, one has
\begin{eqnarray*}-\displaystyle\left[\displaystyle\frac{d}{dc}(\varphi_c)\right]''+(1-c^2)\displaystyle\frac{d}{dc}(\varphi_c)-2c\varphi_c-\log(\varphi_c^p)\displaystyle\frac{d}{dc}(\varphi_c)-p\displaystyle\frac{d}{dc}(\varphi_c)=0,\end{eqnarray*}
that is,
\begin{eqnarray*}\mathcal{L}_{1,\varphi_c}\displaystyle\left(\displaystyle\frac{d}{dc}(\varphi_c)\right)=2c\varphi_c\
\ \ \textrm{and}\ \ \
M=2c\mathcal{L}_{1,\varphi_c}^{-1}(\varphi_c)=\displaystyle\frac{d}{dc}(\varphi_c).\end{eqnarray*}
\indent Therefore, we conclude
\begin{eqnarray*}\displaystyle\left(\begin{array}{c}
                                            M \\
                                            N
                                          \end{array}
\right)=\displaystyle\left(\begin{array}{c}
\displaystyle\frac{d}{dc}(\varphi_c)\\
\varphi_c+c\displaystyle\frac{d}{dc}(\varphi_c)
                                          \end{array}
\right).\end{eqnarray*} \indent Moreover, since
$|c|>\displaystyle\frac{\sqrt{p}}{2}$, from (\ref{lkg.33.1}), we
have
\begin{eqnarray}\label{lkg.56}\displaystyle\left\langle\mathcal{L}_{Re,\varphi_c}^{-1}\displaystyle\left(\begin{array}{c}
                                                                                             c\varphi_c \\
                                                                                             \varphi_c
                                                                                           \end{array}
\right),\displaystyle\left(\begin{array}{c}
                                                                                             c\varphi_c \\
                                                                                             \varphi_c
                                                                                           \end{array}
\right)\right\rangle_{2,2}&=&\displaystyle\left\langle\displaystyle\left(\begin{array}{c}
                                                                                             M \\
                                                                                             N
                                                                                           \end{array}
\right),\displaystyle\left(\begin{array}{c}
                                                                                             c\varphi_c \\
                                                                                             \varphi_c
                                                                                           \end{array}
\right)\right\rangle_{2,2}\nonumber\\
\\
&=&\displaystyle\int_0^L\varphi_c^2\
dx+c\displaystyle\frac{d}{dc}\displaystyle\left(\displaystyle\int_0^L\varphi_c^2\
dx\right)=-d''(c)<0.\nonumber\end{eqnarray}

We see that $-p$ is the unique negative eigenvalue of the operator
$\mathcal{L}_{1,\varphi_c}$ which is for the
eigenfunction $\varphi_c$. In fact, from (\ref{travKG}),
\begin{eqnarray*}\mathcal{L}_{1,\varphi_c}(\varphi_c)=-\varphi_c''+(1-c^2)\varphi_c-\log(\varphi_c^p)\varphi_c-p\varphi_c=-p\varphi_c.\end{eqnarray*}
From Proposition \ref{p.lkg.1}, we see that
$$\lambda_0:=\lambda_0(c)=\displaystyle\frac{1+c^2-p-\displaystyle\sqrt{1+2c^2+2p+c^4-2c^2p+p^2}}{2}<0$$
is the unique negative eigenvalue to operator
$\mathcal{L}_{Re,\varphi_c}$ which is for the
eigenfunction
$\displaystyle\left(\begin{array}{c}
                      \varphi_c \\
                      m\varphi_c
                    \end{array}
\right),\ \textrm{ where}\ \
m=-\displaystyle\frac{c}{\lambda_0-1}.$

Thus,
\begin{eqnarray}\label{lkg.57.2}\ \ \displaystyle\left\langle\left(\begin{array}{c}
                      c\varphi_c \\
                      \varphi_c
                    \end{array}
\right),\left(\begin{array}{c}
                      \varphi_c \\
                      m\varphi_c
                    \end{array}
\right)\right\rangle_{2,2}=\displaystyle\frac{(\lambda_0-2)c}{(\lambda_0-1)}\displaystyle\int_0^L\varphi_c^2\
dx\neq0.
\end{eqnarray}

From (\ref{lkg.50.5.1})-(\ref{lkg.54}) we get
\begin{eqnarray*}a_{\ast}=\displaystyle\left\langle\mathcal{L}_{Re,\varphi_c}\displaystyle\left(\begin{array}{c}
                                                                                             \psi_1 \\
                                                                                             \psi_2
                                                                                           \end{array}
\right),\displaystyle\left(\begin{array}{c}
                                                                                             \psi_1 \\
                                                                                             \psi_2 \end{array}
\right)\right\rangle_{2,2}=\gamma.\end{eqnarray*}

The spectral properties related to the operator
$\mathcal{L}_{Re,\varphi_c}$ allow us to conclude
$\lambda_0\leq\gamma$. Next, suppose that $\lambda_0=\gamma$. By
taking the inner product of (\ref{lkg.54}) with the function $\left(
                      \begin{array}{c}
                        \varphi_c \\
                        m\varphi_c \\
                      \end{array}
                    \right)
$, we have
\begin{eqnarray*}\lambda_0\displaystyle\left\langle\displaystyle\left(
\begin{array}{c}
  \psi_1 \\
  \psi_2
\end{array}\right)
,\displaystyle\left(
\begin{array}{c}\varphi_c \\
  m\varphi_c\end{array}\right)
\right\rangle_{2,2}&=&\displaystyle\left\langle\mathcal{L}_{Re,\varphi_c}\displaystyle\left(
\begin{array}{c}
  \psi_1 \\
  \psi_2
\end{array}\right)
,\displaystyle\left(
\begin{array}{c}\varphi_c \\
  m\varphi_c\end{array}\right)
\right\rangle_{2,2}\nonumber\\
\\
&=&a_{\ast}\displaystyle\left\langle\displaystyle\left(
\begin{array}{c}
  \psi_1 \\
  \psi_2
\end{array}\right)
,\displaystyle\left(
\begin{array}{c}\varphi_c \\
  m\varphi_c\end{array}\right)
\right\rangle_{2,2}+b\displaystyle\left\langle\displaystyle\left(
\begin{array}{c}
  c\varphi_c \\
  \varphi_c
\end{array}\right)
,\displaystyle\left(
\begin{array}{c}\varphi_c \\
  m\varphi_c\end{array}\right)
\right\rangle_{2,2},
\end{eqnarray*} that is, \begin{eqnarray*}0=b\displaystyle\left\langle\displaystyle\left(
\begin{array}{c}
  c\varphi_c \\
  \varphi_c
\end{array}\right)
,\displaystyle\left(
\begin{array}{c}\varphi_c \\
  m\varphi_c\end{array}\right)\right\rangle_{2,2}.\end{eqnarray*}
\indent Thus, from (\ref{lkg.57.2}), we get $b=0$. So,
\begin{eqnarray*}\mathcal{L}_{Re,\varphi_c}\displaystyle\left(\begin{array}{c}
                                                               \psi_1 \\
                                                               \psi_2
                                                             \end{array}
\right)=\lambda_0\displaystyle\left(\begin{array}{c}
                                                               \psi_1 \\
                                                               \psi_2
                                                             \end{array}
\right)\ \ \textrm{and}\ \ \displaystyle\left(\begin{array}{c}
                                                               \psi_1 \\
                                                               \psi_2
                                                             \end{array}
\right)=c_2\displaystyle\left(\begin{array}{c}
                                                               \varphi_c \\
                                                               m\varphi_c
                                                             \end{array}
\right),
 \end{eqnarray*} where $c_2\neq 0$ is a constant. This last fact
contradicts (\ref{lkg.50.5.1}) and therefore $a_{\ast}\neq \lambda_0$.

Now, we suppose that $\gamma=a_{\ast}\in(\lambda_0,0)$. From
(\ref{lkg.54}) and spectral properties concerning to the operator
$\mathcal{L}_{Re,\varphi_c}$, we obtain
\begin{eqnarray}\label{lkg.58}\displaystyle\left(\begin{array}{c}
                                                   \psi_1 \\
                                                   \psi_2
                                                 \end{array}
\right)=b(\mathcal{L}_{Re,\varphi_c}-a_{\ast}I)^{-1}\displaystyle\left(\begin{array}{c}
                                                   c\varphi_c \\
                                                   \varphi_c
                                                 \end{array}
\right),\end{eqnarray} that is, $b\neq 0$. Consider $a\in
(\lambda_0,0)$ and the auxiliary function $G$,
\begin{eqnarray*}G(a)=\displaystyle\left\langle(\mathcal{L}_{Re,\varphi_c}-aI)^{-1}\displaystyle\left(\begin{array}{c}
                                                  c\varphi_c \\
                                                   \varphi_c
                                                  \end{array}
\right),\displaystyle\left(\begin{array}{c}
                                                c\varphi_c \\
                                                   \varphi_c
                                                 \end{array}
\right)\right\rangle_{2,2}.\end{eqnarray*} From (\ref{lkg.50.5.1}) and
(\ref{lkg.58}), one has $G(a_{\ast})=0$. Since
$\mathcal{L}_{Re,\varphi_c}$ is a self-adjoint operator, we get for
all $a\in (\lambda_0,0)$,
\begin{eqnarray*}G'(a)=\displaystyle\left\|(\mathcal{L}_{Re,\varphi_c}-aI)^{-1}\displaystyle\left(\begin{array}{c}
                                                                                                          c\varphi_c \\
                                                                                                          \varphi_c
                                                                                                        \end{array}
\right)\right\|^2_{2,2}>0.\end{eqnarray*}

Moreover, from (\ref{lkg.56}), we have
\begin{eqnarray*}G(0)=\displaystyle\left\langle\mathcal{L}_{Re,\varphi_c}^{-1}\displaystyle\left(\begin{array}{c}
                                                  c\varphi_c \\
                                                   \varphi_c
                                                  \end{array}
\right),\displaystyle\left(\begin{array}{c}
                                                 c\varphi_c \\
                                                   \varphi_c
                                                 \end{array}
\right)\right\rangle_{2,2}< 0,\end{eqnarray*} that is, $G(a)\neq 0$
for all $a\in(\lambda_0,0)$. Hence, $a_{\ast}\notin(\lambda_0,0)$
and therefore, $\gamma=a_{\ast}=0$. From (\ref{tildegamma}), the proof of ($i$) is now
completed.

($ii$). From item ($i$), we infer that $\kappa\geq 0$. Let us
suppose that $\kappa=0$. We can repeat the same argument as in
Proposition \ref{p.lkg.4} to deduce
\begin{eqnarray}\label{lkg.59}\ \ \ \ \ \ \displaystyle\left\langle\mathcal{L}_{Re,\varphi_c}\displaystyle\left(\begin{array}{c}
                                             \psi_1 \\
                                             \psi_2
                                           \end{array}
\right),\displaystyle\left(\begin{array}{c}
                                             \psi_1 \\
                                             \psi_2
                                           \end{array}
\right)\right\rangle_{2,2}=0,\ \textrm{for\ some}\ \left(
                                                     \begin{array}{c}
                                                       \psi_1 \\
                                                       \psi_2 \\
                                                     \end{array}
                                                   \right)\in X,
\end{eqnarray}
with
\begin{eqnarray}\label{lkg.60}
\displaystyle\left\|\displaystyle\left(\begin{array}{c}
                                                                                                       \psi_1 \\
                                                                                                       \psi_2
                                                                                                     \end{array}
\right)\right\|_{2,2}=1,\ \displaystyle\left\langle
\displaystyle\left(\begin{array}{c}
                                                                                                       \psi_1 \\
                                                                                                       \psi_2
                                                                                                     \end{array}
\right),\displaystyle\left(\begin{array}{c}
                                                                                                       c\varphi_c \\
                                                                                                       \varphi_c
                                                                                                     \end{array}
\right)\right\rangle_{2,2}=0,\end{eqnarray} and
\begin{eqnarray}\label{lkg.61} \displaystyle\left\langle
\displaystyle\left(\begin{array}{c}
                                                                                                       \psi_1 \\
                                                                                                       \psi_2
                                                                                                     \end{array}
\right),\displaystyle\left(\begin{array}{c}
                                                                                                       \log(\varphi_c^p)\varphi_c'+p\varphi_c' \\
                                                                                                       c\varphi_c'
                                                                                                     \end{array}
\right)\right\rangle_{2,2}=0.\end{eqnarray} \indent Next, we can use
the Lagrange multiplier theory to guarantee the existence of
$(b_1,b_2,b_3)\in\mathbb{R}^3$ such that
\begin{eqnarray}\label{lkg.62}\mathcal{L}_{Re,\varphi_c}\displaystyle\left(\begin{array}{c}
                                             \psi_1 \\
                                             \psi_2
                                           \end{array}
\right)=b_1\displaystyle\left(\begin{array}{c}
                                             \psi_1 \\
                                             \psi_2
                                           \end{array}
\right)+b_2\displaystyle\left(\begin{array}{c}
                                                                                                       c\varphi_c \\
                                                                                                       \varphi_c
                                                                                                     \end{array}
\right)+b_3\displaystyle\left(\begin{array}{c}
                                                                                                       \log(\varphi_c^p)\varphi_c'+p\varphi_c' \\
                                                                                                       c\varphi_c'
                                                                                                     \end{array}
\right).\end{eqnarray} \indent Taking the inner product of
$\mathcal{L}_{Re,\varphi_c}\left(
                                                              \begin{array}{c}
                                                                \psi_1 \\
                                                                \psi_2 \\
                                                              \end{array}
                                                            \right)
$ in (\ref{lkg.62}) with $\displaystyle\left(\begin{array}{c}
                             \psi_1 \\
                             \psi_2
                           \end{array}
\right)\in X$, we have from (\ref{lkg.59}), (\ref{lkg.60}) and
(\ref{lkg.61}) that $b_1=0$. On the one hand, since
$\mathcal{L}_{Re,\varphi_c}$ is self-adjoint, we get
\begin{eqnarray*}0=\displaystyle\left\langle\mathcal{L}_{Re,\varphi_c}
\displaystyle\left(\begin{array}{c}
                                                                     \psi_1 \\
                                                                     \psi_2
                                                                   \end{array}
\right),\displaystyle\left(\begin{array}{c}
                                                                     \varphi_c' \\
                                                                     c\varphi_c'
                                                                   \end{array}
\right)\right\rangle_{2,2}=b_3\displaystyle\left\langle
\displaystyle\left(\begin{array}{c}
                                                                                                      \log(\varphi_c^p)\varphi_c'+p\varphi_c' \\
                                                                                                       c\varphi_c'
                                                                                                     \end{array}
\right),\displaystyle\left(\begin{array}{c}
                                                                                                       \varphi_c' \\
                                                                                                       c\varphi_c'
                                                                                                     \end{array}
\right)\right\rangle_{2,2}.
\end{eqnarray*} We also can see that
\begin{eqnarray*}&&\displaystyle\left\langle \displaystyle\left(\begin{array}{c}
                                                                                                      \log(\varphi_c^p)\varphi_c'+p\varphi_c' \\
                                                                                                       c\varphi_c'
                                                                                                     \end{array}
\right),\displaystyle\left(\begin{array}{c}
                                                                                                       \varphi_c' \\
                                                                                                       c\varphi_c'
                                                                                                     \end{array}
\right)\right\rangle_{2,2}=\displaystyle\int_0^L\varphi_c'\displaystyle\left(\log(\varphi_c^p)\varphi_c'+p\varphi_c'+c^2\varphi_c'\right)\
dx\nonumber\\
\\
&&=\displaystyle\int_0^L\varphi_c'\displaystyle\left(\varphi_c'-\varphi_c'''\right)\
dx=\displaystyle\int_0^L(\varphi_c')^2+(\varphi_c'')^2\
dx>0.\nonumber\end{eqnarray*} Thus $b_3=0$ and, therefore,
\begin{eqnarray}\label{lkg.63}\mathcal{L}_{Re,\varphi_c}\displaystyle\left(\begin{array}{c}
                                             \psi_1 \\
                                             \psi_2
                                           \end{array}
\right)=b_2\displaystyle\left(\begin{array}{c}
                                                                                                       c\varphi_c \\
                                                                                                       \varphi_c
                                                                                                     \end{array}
\right).\end{eqnarray} \indent From (\ref{lkg.54.1}), we deduce
\begin{eqnarray}\label{lkg.63.1}\mathcal{L}_{Re,\varphi_c}\displaystyle\left(\begin{array}{c}
                                                               \displaystyle\frac{d}{dc}(\varphi_c) \\
                                                               \varphi_c+ c\displaystyle\frac{d}{dc}(\varphi_c)
                                                             \end{array}
\right)=\displaystyle\left(\begin{array}{c}
                                                               c\varphi_c \\
                                                               \varphi_c
                                                             \end{array}
\right).\end{eqnarray} If we combine (\ref{lkg.63}) and
(\ref{lkg.63.1}), there exists $b_4\in\mathbb{R}$ such that
\begin{eqnarray*}\displaystyle\left(\begin{array}{c}
                                             \psi_1 \\
                                             \psi_2
                                           \end{array}
\right)-b_2\displaystyle\left(\begin{array}{c}
                                                               \displaystyle\frac{d}{dc}(\varphi_c) \\
                                                               \varphi_c+ c\displaystyle\frac{d}{dc}(\varphi_c)
                                                             \end{array}
\right)=b_4\displaystyle\left(\begin{array}{c}
                                                               \varphi_c' \\
                                                               c\varphi_c'
                                                             \end{array}
\right).\end{eqnarray*} This last identity implies that
\begin{eqnarray}\label{lkg.64}\begin{array}{l}\displaystyle\left\langle\left(\begin{array}{c}
                                                             \psi_1 \\
                                                             \psi_2 \\
                                                           \end{array}
                                                         \right)
,\left(
   \begin{array}{c}
     c\varphi_c \\
     \varphi_c \\
   \end{array}
 \right)
\right\rangle_{2,2}-b_2\displaystyle\left\langle\left(\begin{array}{c}
                                                             M \\
                                                             N \\
                                                           \end{array}
                                                         \right)
,\left(
   \begin{array}{c}
     c\varphi_c \\
     \varphi_c \\
   \end{array}
 \right)
\right\rangle_{2,2}\\
\\
=b_4\displaystyle\left\langle\left(
                                                           \begin{array}{c}
                                                             \varphi_c' \\
                                                             c\varphi_c' \\
                                                           \end{array}
                                                         \right)
,\left(
   \begin{array}{c}
     c\varphi_c \\
     \varphi_c \\
   \end{array}
 \right)
\right\rangle_{2,2}=0.\end{array}
\end{eqnarray}

Identities (\ref{lkg.56}), (\ref{lkg.60}) and (\ref{lkg.64}) allow
us to conclude $b_2=0$. So,
\begin{eqnarray*}\displaystyle\left(\begin{array}{c}
                      \psi_1 \\
                      \psi_2
                    \end{array}
\right)=b_4\displaystyle\left(\begin{array}{c}
                      \varphi_c' \\
                      c\varphi_c'
                    \end{array}
\right),\end{eqnarray*} where $b_4\neq 0$. Finally,
\begin{eqnarray*}\displaystyle\left\langle\displaystyle\left(\begin{array}{c}
                      \psi_1 \\
                      \psi_2
                    \end{array}
\right),\displaystyle\left(\begin{array}{c}
                      \log(\varphi_c^p)\varphi_c'+p\varphi_c' \\
                      c\varphi_c'
                    \end{array}
\right)\right\rangle_{2,2}\neq 0.\end{eqnarray*} So, we get a contradiction with
(\ref{lkg.61}).
\end{proof}

\indent Next, we are going to use Propositions \ref{p.lkg.4} and \ref{p.lkg.5} in order to prove Theorem $\ref{t.lkg.2.1.1}$.\\

\noindent \textit{Proof of Theorem $\ref{t.lkg.2.1.1}$.} First, let $u$ be a weak solution related to the problem
(\ref{gl.1}) with initial data
$(u(\cdot,0),u_t(\cdot,0))=(u_0,u_1)\in X.$ Define
\begin{eqnarray*}\mathcal{O}_{\varphi_c}:=\displaystyle\left\{e^{i\theta}(\varphi_c(\cdot+y),ic\varphi_c(\cdot+y));\ (y,\theta)\in\mathbb{R}\times
[0,2\pi]\right\}\end{eqnarray*} the orbit generated by $\varphi_c$
and $v:=u_t$. For $y\in [0,L]$, $\theta\in [0,2\pi]$ and $t\geq 0$,
consider the function $\Omega_t$ as
\begin{eqnarray*}\Omega_t(y,\theta):=\|u_x(\cdot+y,t)e^{i\theta}-\varphi_c'\|_{L^2_{per}}^2+(1-c^2)\|u(\cdot+y,t)e^{i\theta}-\varphi_c\|_{L^2_{per}}^2+\|v(\cdot+y,t)e^{i\theta}-ic\varphi_c\|_{L^2_{per}}^2.\end{eqnarray*}
Since $c^2\in \left(\frac{p}{4},1\right)$ and $p=1,2,3$ for
$t\geq 0$ fixed, one see that the square root of $\Omega_t(y,\theta)$ \textit{ defines an
equivalent norm} in $X$. We see that for all $t\geq 0$, function $\Omega_t$ is
continuous on the compact set $[0,L]\times [0,2\pi]$. There exists $(y,\theta)=(y(t),\theta(t))$ such that
\begin{eqnarray}\label{lkg.70.1}\ \ \ \ \Omega_t(y(t),\theta(t))=\displaystyle\inf_{(y,\theta)\in[0,L]\times
[0,2\pi]}\Omega_t(y,\theta)=\displaystyle\left[\rho_c(\vec{u}(\cdot,t),\mathcal{O}_{\varphi_c})\right]^2,\end{eqnarray}
where $\rho_c(\vec{u}(\cdot,t),\mathcal{O}_{\varphi_c})$ is the
``distance'' between function $\vec{u}$ and the orbit
$\mathcal{O}_{\varphi_c}$ generated by $\varphi_c$. \indent
Moreover, the map
$$t\mapsto \displaystyle\inf_{(y,\theta)\in
[0,L]\times [0,2\pi]}\Omega_{t}(y,\theta)$$ is continuous (see
\cite[Chapter 4, Lemma 2]{bona}).

Next, we consider the perturbation of the periodic wave
$(\varphi_c,ic\varphi_c)$. Suppose that
\begin{eqnarray}\label{lkg.71} u(x+y,t)e^{i\theta}:=\varphi_c(x)+w(x,t)\ \ \textrm{with} \
w:=A+iB\end{eqnarray} and
\begin{eqnarray}\label{lkg.72} v(x+y,t)e^{i\theta}:=ic\varphi_c(x)+z(x,t)\ \ \textrm{with} \ z:=C+iD,\end{eqnarray}
where $t\geq 0$, $x\in \mathbb{R}$ and $y=y(t)$ and
$\theta=\theta(t)$ are determined by (\ref{lkg.70.1}).

Denote the vector $$\vec{w}=\vec{w}_c=(w,z)=(\textrm{Re}\ w,
\textrm{Im}\ z,\textrm{Im}\ w,\textrm{Re}\ z)=(A,D,B,C),$$ where
$(A,D,B,C)\in\mathbb{R}^4$. So, using the property of minimum
$(y(t),\theta(t))$, we obtain from (\ref{lkg.71}) and (\ref{lkg.72})
that $A$, $B$, $C$ and $D$ must satisfy the compatibility relations
\begin{eqnarray}\label{lkg.73}\displaystyle\left\langle\displaystyle\left(\begin{array}{c}
                                                                            A(\cdot,t) \\
                                                                            D(\cdot,t)
                                                                          \end{array}
\right),\displaystyle\left(\begin{array}{c}
                                                                            \log(\varphi_c^p)\varphi_c'+p\varphi_c' \\
                                                                            c\varphi_c'
                                                                          \end{array}
\right)\right\rangle_{2,2}=0\end{eqnarray} and
\begin{eqnarray}\label{lkg.74}\displaystyle\left\langle\displaystyle\left(\begin{array}{c}
                                                                            B(\cdot,t) \\
                                                                            C(\cdot,t)
                                                                          \end{array}
\right),\displaystyle\left(\begin{array}{c}
                                                                            \log(\varphi_c^p)\varphi_c \\
                                                                            -c\varphi_c
                                                                          \end{array}
\right)\right\rangle_{2,2}=0,\end{eqnarray} for all $t\geq 0$. Next,
we use the fact that $\mathcal{E}$ and $\mathcal{F}$ defined in
(\ref{gl.108.1}) and (\ref{gl.109.1}) are invariant by translations
and rotations to get
\begin{eqnarray*}\Delta\mathcal{G}=\mathcal{G}(u_0,u_1)-\mathcal{G}(\varphi_c,ic\varphi_c)=\mathcal{G}(w(\cdot,t)+\varphi_c,z(\cdot,t)+ic\varphi_c)-\mathcal{G}(\varphi_c,ic\varphi_c),
\end{eqnarray*} for all $t\geq 0$. Since
$\mathcal{G}'(\varphi_c,ic\varphi_c)=\mathcal{G}'(\varphi_c,c\varphi_c,0,0)=\vec{0}$,
we can write
\begin{eqnarray*}\Delta\mathcal{G}&=&\displaystyle\sum_{n=2}^{4}\displaystyle\left[\displaystyle\frac{\mathcal{G}^{(n)}(\varphi_c,ic\varphi_c)}{n!}\right]\cdot[\vec{w}(\cdot,t)]^n+{O}(w(\cdot,t))\end{eqnarray*}
and for all $t\geq 0$, we deduce
\begin{eqnarray*}\label{lkg.66}\Delta\mathcal{G}&=&\displaystyle\frac{1}{2}\displaystyle\left\langle\mathcal{L}_{Re,\varphi_c}\left(
                                                                                                               \begin{array}{c}
                                                                                                                 A(\cdot,t) \\
                                                                                                                 D(\cdot,t) \\
                                                                                                               \end{array}
                                                                                                             \right)
,\left(\begin{array}{c} A(\cdot,t) \\
                                                                                                                 D(\cdot,t) \\
                                                                                                               \end{array}
                                                                                                             \right)\right\rangle_{2,2}\nonumber\\
\nonumber\\
&+&\displaystyle\frac{1}{2}\displaystyle\left\langle\mathcal{L}_{Im,\varphi_c}\left(
                                                                                                               \begin{array}{c}
                                                                                                                 B(\cdot,t) \\
                                                                                                                 C(\cdot,t) \\
                                                                                                               \end{array}
                                                                                                             \right)
,\left(
                                                                                                               \begin{array}{c}
                                                                                                                 B(\cdot,t) \\
                                                                                                                 C(\cdot,t) \\
                                                                                                               \end{array}
                                                                                                             \right)\right\rangle_{2,2}\nonumber\\
                                                                                                             \\
&+&\displaystyle\frac{p}{6}\displaystyle\int_0^L\displaystyle\left[\displaystyle\frac{A(\cdot,t)B(\cdot,t)^2-A(\cdot,t)^3}{\varphi_c}\right]\
dx\nonumber\\
\nonumber\\
&+&\displaystyle\frac{p}{24}\displaystyle\int_0^L\displaystyle\left[\displaystyle\frac{A(\cdot,t)^4-2A(\cdot,t)^2B(\cdot,t)^2-3B(\cdot,t)^4}{\varphi_c^2}\right]\
dx+{O}({w}(\cdot,t)),\nonumber
\end{eqnarray*} where $|{O}(w(\cdot),t)|\leq {O}(\|\vec{w}(\cdot,t)\|_{X\times
X}^5)$. There exist positive constants $\beta_3$ and $\beta_4$
such that
\begin{eqnarray}\label{lkg.73.2}\Delta\mathcal{G}(t)=\Delta\mathcal{G}&\geq&\displaystyle\frac{1}{2}\displaystyle\left\langle\mathcal{L}_{Re,\varphi_c}\left(
                                                                                                               \begin{array}{c}
                                                                                                                 A(\cdot,t) \\
                                                                                                                 D(\cdot,t) \\
                                                                                                               \end{array}
                                                                                                             \right)
,\left(
                                                                                                               \begin{array}{c}
                                                                                                                 A(\cdot,t) \\
                                                                                                                 D(\cdot,t) \\
                                                                                                               \end{array}
                                                                                                             \right)\right\rangle_{2,2}\nonumber\\
                                                                                                             \nonumber\\
                                                                                                             &+&\displaystyle\frac{1}{2}\displaystyle\left\langle\mathcal{L}_{Im,\varphi_c}\left(
                                                                                                               \begin{array}{c}
                                                                                                                 B(\cdot,t) \\
                                                                                                                 C(\cdot,t) \\
                                                                                                               \end{array}
                                                                                                             \right)
,\left(
                                                                                                               \begin{array}{c}
                                                                                                                 B(\cdot,t) \\
                                                                                                                 C(\cdot,t) \\
                                                                                                               \end{array}
                                                                                                             \right)\right\rangle_{2,2}\\
\nonumber\\
&-&\beta_3\|\vec{w}(\cdot,t)\|^3_{X\times
X}-\beta_4\|\vec{w}(\cdot,t)\|^4_{X\times X}
-{O}(\|\vec{w}(\cdot,t)\|_{X\times X}^5),\nonumber\end{eqnarray} for
all $t\geq 0$.

Initially, let us suppose that
$\mathcal{F}(\varphi_c,ic\varphi_c)=\mathcal{F}(u_0,u_1)=\mathcal{F}(u(\cdot,t),v(\cdot,t)).$
So,
\begin{eqnarray}\label{lkg.73.1.1}\displaystyle\left\langle\left(
                                            \begin{array}{c}
                                              A(\cdot,t) \\
                                              D(\cdot,t) \\
                                            \end{array}
                                          \right)
,\left(\begin{array}{c}
   c\varphi_c \\
   \varphi_c
 \end{array}\right)
\right\rangle_{2,2}=\displaystyle\int_0^L\displaystyle\left[B(\cdot,t)C(\cdot,t)-A(\cdot,t)D(\cdot,t)\right]\
dx.\end{eqnarray}

Without loss of generality, we consider
\begin{eqnarray}\label{lkg.73.1.2}\displaystyle\left\|\displaystyle\left(
                                       \begin{array}{c}
                                         c\varphi_c \\
                                         \varphi_c \\
                                       \end{array}
                                     \right)
\right\|_{2,2}=1.\end{eqnarray}

Let us define the auxiliary functions,
\begin{eqnarray*}P_{\|}(\cdot,t)&=&\displaystyle\left\langle \left(
                                                    \begin{array}{c}
                                                      A(\cdot,t) \\
                                                      D(\cdot,t) \\
                                                    \end{array}
                                                  \right)
,\left(\begin{array}{c}
     c\varphi_c \\
     \varphi_c \\
   \end{array}
 \right)
\right\rangle_{2,2}\left(\begin{array}{c}
                       c\varphi_c \\
                       \varphi_c \\
                     \end{array}
                   \right)\ \ \textrm{and} \ \ P_{\perp}(\cdot,t)=\left(
                                                          \begin{array}{c}
                                                            A(\cdot,t) \\
                                                            D(\cdot,t) \\
                                                          \end{array}
                                                        \right)-P_{\|}(\cdot,t).\end{eqnarray*}

In view of the identity (\ref{lkg.73.1.2}), one has $P_{\perp}(\cdot,t)\perp \left(\begin{array}{c}
                                                                   c\varphi_c \\
                                                                   \varphi_c \\
                                                                 \end{array}
                                                               \right).$
Moreover, we can use the
compatibility condition (\ref{lkg.73}) to deduce
$$P_{\perp}(\cdot,t)\perp \left(\begin{array}{c}
                                                                   \log(\varphi_c^p)\varphi_c'+p\varphi_c' \\
                                                                   c\varphi_c' \\
                                                                 \end{array}
                                                               \right).$$
 So, Proposition
\ref{p.lkg.5} can be used to obtain the existence of a constant $\kappa>0$ such that
\begin{eqnarray}\label{lkg.73.1}\displaystyle\left\langle\mathcal{L}_{Re,\varphi_c}P_{\perp}(\cdot,t),P_{\perp}(\cdot,t)\right\rangle_{2,2}\geq \kappa\|P_{\perp}(\cdot,t)\|_{2,2}^2.\end{eqnarray}
\indent Identity (\ref{lkg.73.1.1}) combined with basic inequalities allows
us to conclude
\begin{eqnarray}\label{lkg.76}\|P_{\perp}(\cdot,t)\|_{2,2}^2&=&\displaystyle\left\|\left(
                                                                    \begin{array}{c}
                                                                      A(\cdot,t) \\
                                                                      D(\cdot,t) \\
                                                                    \end{array}
                                                                  \right)
\right\|^2_{2,2}-\displaystyle\left[\displaystyle\left\langle \left(
                                                                \begin{array}{c}
                                                                  A(\cdot,t) \\
                                                                  D(\cdot,t) \\
                                                                \end{array}
                                                              \right)
,\left(
   \begin{array}{c}
     c\varphi_c \\
     \varphi_c \\
   \end{array}
 \right)
\right\rangle_{2,2}\right]^2\nonumber\\
\\
&\geq&\displaystyle\left\|\left(
                                                                    \begin{array}{c}
                                                                      A(\cdot,t) \\
                                                                      D(\cdot,t) \\
                                                                    \end{array}
                                                                  \right)
\right\|^2_{2,2}-\displaystyle\frac{1}{4}\displaystyle\left[\displaystyle\left\|\left(
                                                                                                                          \begin{array}{c}
                                                                                                                            A(\cdot,t) \\
                                                                                                                            D(\cdot,t) \\
                                                                                                                          \end{array}
                                                                                                                        \right)
                             \right\|_{2,2}^2+\displaystyle\left\|\left(\begin{array}{c}
                                                                                                                            B(\cdot,t) \\
                                                                                                                            C(\cdot,t) \\
                                                                                                                          \end{array}
                                                                                                                        \right)\right\|_{2,2}^2\right]^2.\nonumber\end{eqnarray}

On the other hand,
\begin{eqnarray}\label{lkg.75}\begin{array}{l}\displaystyle\left\langle\mathcal{L}_{Re,\varphi_c}\left(
                                                                     \begin{array}{c}
                                                                       A(\cdot,t) \\
                                                                       D(\cdot,t) \\
                                                                     \end{array}
                                                                   \right)
,\left(\begin{array}{c}
                                                                       A(\cdot,t) \\
                                                                       D(\cdot,t) \\
                                                                     \end{array}
                                                                   \right)\right\rangle_{2,2}=\displaystyle\left\langle\mathcal{L}_{Re,\varphi_c}P_{\perp}(\cdot,t),P_{\perp}(\cdot,t)\right\rangle_{2,2}\\
\\
+2\displaystyle\left\langle\mathcal{L}_{Re,\varphi_c}P_{\perp}(\cdot,t),P_{\|}(\cdot,t)\right\rangle_{2,2}+\displaystyle\left\langle\mathcal{L}_{Re,\varphi_c}P_{\|}(\cdot,t),P_{\|}(\cdot,t)\right\rangle_{2,2},\end{array}
\end{eqnarray} for all $t\geq 0$. Moreover,
\begin{eqnarray}\label{lkg.77}\displaystyle\left|2\displaystyle\left\langle \mathcal{L}_{Re,\varphi_c}P_{\perp}(\cdot,t),P_{\|}(\cdot,t)\right\rangle_{2,2}\right|\leq\beta_5\|\vec{w}(\cdot,t)\|^3_{X\times X}
\end{eqnarray} and
\begin{eqnarray}\label{lkg.78}\displaystyle\left|\displaystyle\left\langle \mathcal{L}_{Re,\varphi_c}P_{\|}(\cdot,t),P_{\|}(\cdot,t)\right\rangle_{2,2}\right|\leq\beta_6\|\vec{w}(\cdot,t)\|^4_{X\times X},
\end{eqnarray} where $\beta_5$ and $\beta_6$ are positive constants.
Gathering the results in (\ref{lkg.73.1}), (\ref{lkg.76}), (\ref{lkg.75}),
(\ref{lkg.77}) and (\ref{lkg.78}) we see that
\begin{eqnarray}\label{lkg.78.1}\displaystyle\left\langle
\mathcal{L}_{Re,\varphi_c}\left(
                                             \begin{array}{c}
                                               A(\cdot,t) \\
                                               D(\cdot,t) \\
                                             \end{array}
                                           \right)
,\left(\begin{array}{c}
                                               A(\cdot,t) \\
                                               D(\cdot,t) \\
                                             \end{array}
                                           \right)\right\rangle_{2,2}&\geq&
                                           \kappa\displaystyle\left\|\left(
                                             \begin{array}{c}
                                               A(\cdot,t) \\
                                               D(\cdot,t) \\
                                             \end{array}
                                           \right)\right\|^2_{2,2}\nonumber\\
\\
&-&\beta_5\|\vec{w}(\cdot,t)\|^3_{X\times
X}-(\kappa+\beta_6)\|\vec{w}(\cdot,t)\|^4_{X\times X}.\nonumber
\end{eqnarray}

Using the definition of the operator $\mathcal{L}_{Re,\varphi_c}$, we
get the existence of a constant $\beta_7>0$ such that
\begin{eqnarray}\label{lkg.78.2}\displaystyle\left\langle
\mathcal{L}_{Re,\varphi_c}\left(\begin{array}{c}
                                               A(\cdot,t) \\
                                               D(\cdot,t) \\
                                             \end{array}
                                           \right)
,\left(\begin{array}{c}
                                               A(\cdot,t) \\
                                               D(\cdot,t) \\
                                             \end{array}
                                           \right)\right\rangle_{2,2}&\geq&
\displaystyle\int_0^L|A_x(\cdot,t)|^2\
dx-\beta_7\displaystyle\left\|\left(
                                \begin{array}{c}
                                  A(\cdot,t) \\
                                  D(\cdot,t) \\
                                \end{array}
                              \right)
\right\|_{2,2}^2.
\end{eqnarray}
\indent Inequalities (\ref{lkg.78.1}) and (\ref{lkg.78.2}) enable us to
guarantee the existence of positive constants $\beta_8$, $\beta_9$
and $\beta_{10}$ such that
\begin{eqnarray}\label{lkg.81}\displaystyle\left\langle
\mathcal{L}_{Re,\varphi_c}\left(\begin{array}{c}
                                               A(\cdot,t) \\
                                               D(\cdot,t) \\
                                             \end{array}
                                           \right)
,\left(
                                             \begin{array}{c}
                                               A(\cdot,t) \\
                                               D(\cdot,t) \\
                                             \end{array}
                                           \right)\right\rangle_{2,2}&\geq&\beta_8\displaystyle\left\|\left(
                                                                                                         \begin{array}{c}
                                                                                                           A(\cdot,t) \\
                                                                                                           D(\cdot,t) \\
                                                                                                         \end{array}
                                                                                                       \right)\right\|^2_{X}\nonumber\\
\\
&-&\beta_9\|\vec{w}(\cdot,t)\|^3_{X\times
X}-\beta_{10}\|\vec{w}(\cdot,t)\|^4_{X\times X}.\nonumber
\end{eqnarray}

In addition, the compatibility conditions in (\ref{lkg.74}) and
Proposition \ref{p.lkg.4} give us the existence of a constant
$\beta>0$ such that
\begin{eqnarray*}\displaystyle\left\langle\mathcal{L}_{Im,\varphi_c}\left(
                                                                                   \begin{array}{c}
                                                                                     B(\cdot,t) \\
                                                                                     C(\cdot,t)
                                                                                   \end{array}
                                                                                 \right)
,\left(\begin{array}{c}B(\cdot,t) \\
                                                                                     C(\cdot,t)
                                                                                   \end{array}
                                                                                 \right)\right\rangle_{2,2}\geq\beta\displaystyle\left\|\left(
                                                                                   \begin{array}{c}
                                                                                     B(\cdot,t) \\
                                                                                     C(\cdot,t)
                                                                                   \end{array}
                                                             \right)\right\|_{2,2}^2.\end{eqnarray*}
Finally, from the definition of the operator
$\mathcal{L}_{Im,\varphi_c}$ and similar arguments as above we obtain
the existence of a constant $\beta_{11}>0$ such that
\begin{eqnarray}\label{lkg.79}\ \ \ \displaystyle\left\langle\mathcal{L}_{Im,\varphi_c}\left(
                                                                                   \begin{array}{c}
                                                                                     B(\cdot,t) \\
                                                                                     C(\cdot,t)
                                                                                   \end{array}
                                                                                 \right)
,\left(
                                                                                   \begin{array}{c}
                                                                                     B(\cdot,t) \\
                                                                                     C(\cdot,t)
                                                                                   \end{array}\right)\right\rangle_{2,2}\geq\beta_{11}\displaystyle\left\|\left(\begin{array}{c}B(\cdot,t) \\
C(\cdot,t) \\
\end{array}\right)
\right\|_{X}^2.\end{eqnarray}

Therefore, by substituting (\ref{lkg.81}) and (\ref{lkg.79}) in
(\ref{lkg.73.2}), we have that $\Delta\mathcal{G}(t)\geq
h_1(\|\vec{w}(\cdot,t)\|_{X\times X})$ for all $t\geq 0$, where
$h_1(x):=\eta_1x^2(1-\eta_2x-\eta_3x^2-{O}(x^3))$ is a smooth
function and $\eta_1,\ \eta_2$ and $\eta_3$ are positive constants.
We see that $h_1(0)=0$ and $h_1(x)>0$ for $x$ small enough. Consider
$\varepsilon>0$. Then, using the property that $\mathcal{E}$ is
continuous on the manifold
$$S=S_c:=\displaystyle\left\{(u_0,u_1)\in X;\
\mathcal{F}(u_0,u_1)=\mathcal{F}(\varphi_c,ic\varphi_c)\right\},$$
there exists $\delta=\delta(\varepsilon,c)>0$  such that if
$(u_0,u_1)\in S$ and
$\|(u_0,u_1)-(\varphi_c,ic\varphi_c)\|_{X}<\delta,$ then for all
$t\geq 0$, $h_1(\|\vec{w}(\cdot,t)\|_{X\times X})\leq
\Delta\mathcal{G}(t)=\Delta\mathcal{G}(0)<h_1(\varepsilon).$

Since $h_1$ is an invertible function over $(0,\varepsilon]$, we
have that
\begin{eqnarray}\label{lkg.82}\|\vec{w}(\cdot,t)\|_{X\times
X}<\varepsilon\ \textrm{ for\ all\ } t\geq 0.\end{eqnarray}
Statement (\ref{lkg.82}) is enough to prove the stability on the
manifold $S$ since
\begin{eqnarray*}\|(u(\cdot+y(t),t)e^{i\theta(t)}-\varphi_c,v(\cdot+y(t),t)e^{i\theta(t)}-ic\varphi_c)\|_{X}<\varepsilon,\end{eqnarray*}
for all $t\geq 0$.

The next step is to prove the general case. Suppose a particular
situation where $c\in\left(\frac{\sqrt{p}}{2},1\right)$ is fixed. By
repeating the same process above, there exist a constant
$\delta_1=\delta_1(c)>0$ and a real function $z_0$ such that
$z_0(0)=0$, where $z_0$ is a strictly increasing for (small)
positive values. Moreover for all $t\geq 0$,
\begin{eqnarray}\label{lkg.83}\mathcal{G}_s(u_0,u_1)-\mathcal{G}_s(\varphi_s,is\varphi_s)\geq
z_0(\|\vec{w}_s(\cdot,t)\|_{X\times X}),\end{eqnarray} provided that
$|s-c|<\delta_1$ and
$\mathcal{F}(u_0,u_1)=\mathcal{F}(\varphi_s,is\varphi_s)$.

For $\epsilon>0$ small enough, the continuity of $\mathcal{E}$
enables us to guarantee the existence of the parameter $\delta_2$,
$0<\delta_2=\delta_2(c,\epsilon)<\displaystyle\frac{\epsilon}{2}$,
such that if $(u_0,u_1),\ (v_0,v_1)\in
B_{\delta_2}(\varphi_c,ic\varphi_c)\subset X,$ then
\begin{eqnarray}\label{lkg.84}|\mathcal{E}(u_0,u_1)-\mathcal{E}(v_0,v_1)|<z_0\displaystyle\left(\displaystyle\frac{\epsilon}{2}\right).\end{eqnarray}

Since the map $s\in \mathbb{R}\mapsto\varphi_s\in
H^2_{per,e}([0,L])$ is smooth, there exists  $\delta_3>0$ such that if
$|s-c|<\delta_3<\delta_1$, then
\begin{eqnarray}\label{lkg.85}\ \ \ \ \frac{\sqrt{p}}{2}<s<1\ \ \textrm{and} \ \ \|(\varphi_s,is\varphi_s)-(\varphi_c,ic\varphi_c)\|_X<\delta_2<\displaystyle\frac{\epsilon}{2}.\end{eqnarray}

Furthermore, since
$-\frac{d}{ds}\left[\mathcal{F}(\varphi_s,is\varphi_s)\right]=d''(s)>0\
\textrm{for\ all}\ s\in \left(\frac{\sqrt{p}}{2},1\right),$ there exists
$\delta_4>0$ such that if
\begin{eqnarray}\label{lkg.87}\|(u_0,u_1)-(\varphi_c,ic\varphi_c)\|_{X}<\delta_4<\displaystyle\frac{\delta_2}{2},\end{eqnarray}
one has $s^{\ast}=s^{\ast}(u_0,u_1)\in\mathbb{R}$, where
$|s^{\ast}-c|<\delta_3$ and $s^{\ast}$ satisfy
$\mathcal{F}(u_0,u_1)=\mathcal{F}(\varphi_{s^{\ast}},is^{\ast}\varphi_{s^{\ast}})$.

Consider $\delta_4>0$ as above and suppose that $(u_0,u_1)$ verifies
(\ref{lkg.87}). Since $|s^{\ast}-c|<\delta_3$, we get condition
(\ref{lkg.85}) applied to $s=s^{\ast}$, that is,
\begin{eqnarray}\label{lkg.87.1}\ \ \ \ \frac{\sqrt{p}}{2}<s^{\ast}<1\ \ \textrm{and} \ \ \|(\varphi_{s^{\ast}},is^{\ast}\varphi_{s^{\ast}})-(\varphi_c,ic\varphi_c)\|_X<\delta_2<\displaystyle\frac{\epsilon}{2}.\end{eqnarray} Thus, from (\ref{lkg.84}), (\ref{lkg.87}) and (\ref{lkg.87.1}), we obtain
\begin{eqnarray}\label{lkg.87.2}|\mathcal{E}(u_0,u_1)-\mathcal{E}(\varphi_{s^{\ast}},is^{\ast}\varphi_{s^{\ast}})|<z_0\displaystyle\left(\displaystyle\frac{\epsilon}{2}\right).\end{eqnarray}
\indent Next, since $|s^{\ast}-c|<\delta_3<\delta_1$ and
$\mathcal{F}(u_0,u_1)=\mathcal{F}(\varphi_{s^{\ast}},is^{\ast}\varphi_{s^{\ast}})$, we deduce from (\ref{lkg.83}) and (\ref{lkg.87.2}) that
\begin{eqnarray*}z_0(\|\vec{w}_{s^{\ast}}(\cdot,t)\|_{X\times
X})\leq\mathcal{G}_{s^{\ast}}(u_0,u_1)-\mathcal{G}_{s^{\ast}}(\varphi_{s^{\ast}},is^{\ast}\varphi_{s^{\ast}})<z_0\displaystyle\left(\displaystyle\frac{\epsilon}{2}\right).\end{eqnarray*}
Hence, the fact that $z_0$ is a locally invertible function gives us
\begin{eqnarray}\label{lkg.91}\|\vec{w}_{s^{\ast}}(\cdot,t)\|_{X\times
X}<\displaystyle\frac{\epsilon}{2}\ \textrm{for\ all} \ t\geq
0.\end{eqnarray} \indent Finally, if one combines (\ref{lkg.87.1}) and (\ref{lkg.91}), \begin{eqnarray*}\|\vec{w}_c(\cdot,t)\|_{X\times X}&\approx&\displaystyle\inf_{y\in [0,L],\ \theta\in [0,2\pi]}\|(u(\cdot+y,t)e^{i\theta},v(\cdot+y,t)e^{i\theta})-(\varphi_c,ic\varphi_c)\|_{X}\nonumber\\
\nonumber\\
&\leq&\|\vec{w}_{s^{\ast}}(\cdot,t)\|_{X\times
X}+\|(\varphi_{s^{\ast}},is^{\ast}\varphi_{s^{\ast}})-(\varphi_c,ic\varphi_c)\|_{X}<\displaystyle\frac{\epsilon}{2}+\displaystyle\frac{\epsilon}{2}=\epsilon\
\textrm{for\ all\ } t\geq 0,\nonumber\end{eqnarray*} which concludes
the proof for the case $c\in\left(\frac{\sqrt{p}}{2},1\right)$. A
similar argument can be used to prove the stability for the case
$c\in\left(-1,-\frac{\sqrt{p}}{2}\right)$.
\begin{flushright}
$\square$
\end{flushright}

\subsection{Orbital Stability of
Standing Waves in the space \textbf{${H^1_{per,e}([0,L])\times
L^2_{per,e}([0,L])}$.}}

We present a brief comment about results of stability/instability of
standing waves for the Logarithmic Klein-Gordon equation (\ref{KG1})
in the Hilbert space $Y_e:=H^1_{per,e}([0,L])\times
L^2_{per,e}([0,L])$ constituted by even periodic functions.

The abstract theory in
\cite{grillakis1}  gives us a general method
to study the orbital stability/instability to standing waves
accordingly with Definition \ref{d.lkg.2.2} for the
abstract Hamiltonian systems of form
\begin{equation}\label{hamiltoGSS}
U_t=J\mathcal{E}'(U(t)),
\end{equation}
where $J$ is a skew-symmetric linear operator and $\mathcal E$ is a
convenient conserved quantity. In our case, if
$U=(u,u_t):=(\mbox{Re}\ u,\mbox{Im}\ u_t,\mbox{Im}\ u,\mbox{Re}\
u)$, $J$ is taken to be
\begin{equation}
\displaystyle J=\left(
                    \begin{array}{cccc}
                      0 & 0 & 0 & 1 \\
                      0 & 0 & -1 & 0 \\
                      0 & 1 & 0 & 0 \\
                      -1 & 0 & 0 & 0 \\
                    \end{array}
                  \right)
\label{matrixJ}
\end{equation}
and $\mathcal{E}$ is the conserved quantity given by
$(\ref{gl.108.1})$, we can follow the arguments determined on
Sections 3 and 4 to obtain:
\begin{itemize}\item{the existence of a weak solution to the problem
(\ref{gl.1}) for all $(u_0,v_0)\in H^1_{per,e}([0,L])\times
L^2_{per,e}([0,L])$, as determined in Section 3.}
\item The existence of a smooth curve of positive solutions $$c\in I\mapsto \varphi_c\in
H^2_{per,e}([0,L])$$ which solves (\ref{travKG}) all of them with the same period
$L>\frac{2\pi}{\sqrt{p}}$.
\item
$\textrm{in}(\displaystyle\left.\mathcal{L}_{\varphi_c}\right|_{X_e})=(1,1)$\
for all $c\in I$.
\item $d''(c)<0$ if $|c|<\frac{\sqrt{p}}{2}$ and $d''(c)>0$ if
$|c|>\frac{\sqrt{p}}{2}$.
\end{itemize} So, by a direct application of the result in \cite{grillakis1} we are in position to enunciate the following result.

\begin{theorem} Consider $c\in I$ satisfying $|c|>\frac{\sqrt{p}}{2}$, $p=1,2,3$.
The periodic solution $\varphi_c(x)$ is orbitally stable in $Y_e$ by
the periodic flow of the equation (\ref{KG1}) accordingly Definition
\ref{d.lkg.2.2}. If $|c|<\frac{\sqrt{p}}{2}$ then
$u(x,t)=e^{ict}\varphi_c(x)$ is orbitally unstable by the periodic
flow of the equation (\ref{KG1}).
\end{theorem}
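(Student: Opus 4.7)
The plan is to apply the abstract stability/instability theorem of Grillakis-Shatah-Strauss \cite{grillakis1} directly to the Hamiltonian formulation $U_t = J\mathcal{E}'(U(t))$ in \eqref{hamiltoGSS}-\eqref{matrixJ}. The four bulleted hypotheses preceding the statement supply exactly the ingredients required by the abstract theorem: well-posedness in $Y_e$ from Theorem \ref{t.bc.gl.6}, existence of a $C^1$ branch $c \in I \mapsto \varphi_c \in H^2_{per,e}([0,L])$ of bound states, the spectral count $\textrm{in}(\mathcal{L}_{\varphi_c}|_{X_e}) = (1,1)$, and the sign of $d''(c)$ computed in Subsection 4.3.

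The first step is to explain why restricting to the even subspace reduces the continuous symmetry group to a single $U(1)$-phase action, which is the setting of \cite{grillakis1}. On the full space $X$, the kernel of $\mathcal{L}_{\varphi_c}$ is spanned by the translation generator $(\varphi_c', c\varphi_c',0,0)$ and the phase generator $(0,0,\varphi_c,-c\varphi_c)$. The first is odd and hence does not belong to $X_e$, so $\ker(\mathcal{L}_{\varphi_c}|_{X_e}) = \textrm{span}\{(0,0,\varphi_c,-c\varphi_c)\}$ is exactly one-dimensional, while the negative eigenspace remains one-dimensional because the Floquet ground state of $\mathcal{L}_{1,\varphi_c}$ is automatically even. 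Thus $n(\mathcal{L}_{\varphi_c}|_{X_e}) = 1$ and $z(\mathcal{L}_{\varphi_c}|_{X_e}) = 1$, matching the one-symmetry GSS framework.

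For $|c| > \sqrt{p}/2$, the explicit computation in \eqref{lkg.33.1} yields $d''(c) > 0$, so the number of positive eigenvalues of the Hessian of $d$ equals $p(d'') = 1 = n(\mathcal{L}_{\varphi_c}|_{X_e})$; the GSS stability theorem then provides orbital stability in $Y_e$ in the sense of Definition \ref{d.lkg.2.2}. Alternatively one could repeat verbatim the direct Lyapunov argument used in the proof of Theorem \ref{t.lkg.2.1.1}, since the coercivity lemmas (Propositions \ref{p.lkg.4} and \ref{p.lkg.5}) have test functions that restrict to $X_e$ without losing any orthogonality. For $|c| < \sqrt{p}/2$, one has $d''(c) < 0$, hence $p(d'') = 0$, and the parity $n(\mathcal{L}|_{X_e}) - p(d'') = 1$ is odd; the GSS instability theorem then produces an exponentially growing eigenmode of the linearized flow which, via the standard Lyapunov-type bootstrap, upgrades to nonlinear orbital instability.

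The main obstacle is purely technical: verifying that $\mathcal{E}$ is sufficiently smooth near $(\varphi_c, ic\varphi_c)$ to justify the Taylor expansion $\mathcal{E}''(\varphi_c, ic\varphi_c) = \mathcal{L}_{\varphi_c}$ modulo the constraint required by \cite{grillakis1}. Since $\varphi_c > 0$ strictly on $[0,L]$ and $\varphi_c \in H^2_{per,e}$, a suitably small neighborhood of the orbit in $Y_e$ avoids the singularity of $s \mapsto s\log(|s|^p)$ at the origin, so the logarithmic nonlinearity is locally smooth and the abstract theorem applies exactly as in the translation-invariant proof of Theorem \ref{t.lkg.2.1.1}. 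This is precisely why the statement is restricted to the positive smooth branch with $L > 2\pi/\sqrt{p}$ rather than to the formal small-period waves mentioned in the introduction.
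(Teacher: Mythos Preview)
Your proposal is correct and follows exactly the paper's approach: the paper simply lists the four bulleted hypotheses (well-posedness in $Y_e$, smooth branch of even bound states, $\textrm{in}(\mathcal{L}_{\varphi_c}|_{X_e})=(1,1)$, and the sign of $d''(c)$) and then states the theorem as a direct application of the abstract Grillakis--Shatah--Strauss result in \cite{grillakis1}, with no further argument. Your elaboration on why the even restriction kills the translation mode, why the negative eigenfunction survives, and why the logarithmic nonlinearity is smooth near the strictly positive wave fills in precisely the routine verifications the paper leaves implicit.
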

{\begin{flushright} $\square$ \end{flushright}}

\section*{Acknowledgements}

F. N. is supported by Funda\c{c}\~ao Arauc\'aria and CNPq 304240/2018-4.

%%%%%%%%%%%%%%%%%%%%%%%%%%%%%%%%%%%%


\begin{thebibliography}{99}

\bibitem{AN1} J. Angulo and F. Natali, \textit{Instability  of periodic traveling waves for dispersive models}. Differential and Integral Equations, 29 (2016), 837--874.

\bibitem{AN2} J. Angulo and F. Natali, \textit{Stability and instability of periodic
travelling waves solutions for the critical Korteweg-de Vries and
non-linear Schr\"odinger equations}, Physica D, 238 (2009),
603-621.

\bibitem{AN} J. Angulo and F. Natali, \textit{Positivity properties of the Fourier transform and the stability of
periodic travelling-wave solutions}, SIAM J. Math. Anal., 40 (2008),
1123-1151.


\bibitem{gorka1} K. Bartkowski and P. G\'orka, \textit{One-dimensional KleinGordon equation with logarithmic nonlinearities},  J. Phys. A: Math. Theor., 41 (2008), 355201, 11 pp.


\bibitem{birula2} I. Bia{\l}ynicki-Birula and J. Mycielski. \textit{Nonlinear wave
mechanics}, Ann. Phys., 100 (1976), 62-93.

\bibitem{birula1} I. Bia{\l}ynicki-Birula and J. Mycielski, \textit{Wave equations with logarithmic nonlinearities},
Bull. Acad. Polon. Sci., 23 (1975), 461-466.

\bibitem{blanchard} P. H. Blanchard, J. Stubbe and L. V\'azquez, \textit{On the
stability of solitary waves for classical scalar fields}, Annal.
Inst. Henry Poincar\'e Sec. A., 47 (1987), 309-336.

\bibitem{bona} J. L. Bona, \textit{On the Stability Theory of Solitary
Waves.} Proc. R. Soc. Lond. Ser. A, 344 (1975), 363-374.

\bibitem{lions1} F. Boyer and P. Fabrie, \textit{Mathematical Tools for the Study of the Incompressible Navier-Stokes Equations and Related Models}, Applied Mathematical Sciences 183, Springer, 2013.

\bibitem{carles} R. Carles and D. Pelinovsky, \textit{On the orbital stability of Gaussian solitary waves in the log-KdV equation}, Nonlinearity, 27 (2014), 3185-3202.

\bibitem{Cazenave3} T. Cazenave, \textit{Stable solutions of the logarithmic Schr\"odinger equation},
Non. Anal. Theory Meth. Appl., 7 (1983), 1127-1140.

\bibitem{cazenave2} T. Cazenave and P. L. Lions, \textit{Orbital stability of
standing waves for some nonlinear Schr\"odinger equations}, Comm.
Math. Phys., 85 (1982), 549-561.

\bibitem{cazenave} T. Cazenave and A. Haraux. \textit{\'{E}quations d'\'{e}volution avec non
Lin\'{e}arit\'{e} Logarithmique.} Annales de la Facult\'{e} des
Sciences de Toulouse, 2 (1980), 21-51.

\bibitem{CNP} F. Crist\'ofani, F. Natali and A. Pastor, \textit{Orbital stability of periodic traveling-wave solutions for the Log-KdV equation}, J. Differential Equations, 263 (2017), pp. 2630--2660.

\bibitem{gahler} R. Gahler, A. G. Klein and A. Zeilinger, \textit{Neutron optical tests of nonlinear wave
mechanics}, Phys. Rev A., 23 (1981), 1611-1617.

\bibitem{gorka2} P. G\'orka, \textit{Logarithmic Klein-Gordon Equation.}
Acta Physica Polonica B, 40 (2009), 59-66.

\bibitem{grillakis1} M. Grillakis, J. Shatah and W. Strauss, \textit{Stability Theory of Solitary Waves in the Presence of Symmetry
I.} J. Funct. Anal., 74 (1987), 160-197.


\bibitem{iorio1} R. J. Iorio Jr. and V. M. V. Iorio, \textit{Fourier Analysis and
Partial Differential Equations}, Cambridge Stud. in Advan. Math.,
2001.

\bibitem{lilin} P. Li and C. Liu, \textit{A class of fourth-order parabolic equation with logarithmic
nonlinearity}. J. of Ineq. and Appl., (2018) 2018:328.
https://doi.org/10.1186/s13660-018-1920-7.

\bibitem{lions} J. L. Lions, \textit{Quelques M\'ethodes de R\'{e}solution des Probl\`{e}mes aux Limites non
Lin\'{e}aires.} Dunod Gauthier-Villars, Paris, 1969.

\bibitem{magnus} W. Magnus and S. Winkler, \textit{Hill's Equation.}
Interscience, Tracts in Pure and Applied Mathematics, vol. 20,
Wiley, New York, 1966.

\bibitem{natali1} F. Natali and A. Neves, \textit{Orbital Stability of Solitary Waves}. IMA Journal of Applied Mathematics, 79 (2014), 1161-1179

\bibitem{neves} A. Neves, \textit{Floquet's Theorem and Stability of Periodic Solitary
Waves.} J. Dyn. Diff. Equat., 21 (2009), 555-565.

\bibitem{neves1} A. Neves. \textit{Isoinertial Family of Operators and Convergence of KdV Cnoidal Waves to
Solitons}. J. Differ. Equat., 244 (2008), 875-886.

\bibitem{rosen} G. Rosen, \textit{Dilatation covariance and exact
solutions in local relativistic field theories}, Phys. Rev.,
183 (1969), 1186-1188.

\bibitem{shimony} A. Shimony, \textit{Proposed neutron interferomenter test of some nonlinear variants of wave mechanics},
Phys. Rev. A, 20 (1979), 394-396.

\bibitem{shull} C. G. Shull, D. K. Atwood, J. Arthur and M. A. Horne,
\textit{Search for a nonlinear variant of the Schr\"odinger equation
by neutron interferometry}, Phys. Rev. Lett., 44 (1980),
765-768.

\bibitem{weinstein1} M. I. Weinstein, \textit{Modulation Stability of Ground States of Nonlinear Schr\"{o}dinger
Equations}. SIAM J. Math, 16 (1985), 472-490.

\end{thebibliography}
\end{document}